\newtheorem{Theorem}{Theorem}[section]
\newtheorem{Lemma}{Lemma}[section]
\newtheorem{Remark}{Remark}[section]
\newtheorem{Corollary}{Corollary}[section]
\numberwithin{equation}{section}
\def\XXint#1#2#3{{\setbox0=\hbox{$#1{#2#3}{\int}$ }
\vcenter{\hbox{$#2#3$ }}\kern-.6\wd0}}
\DeclareMathOperator*{\esssup}{ess\,sup}
\begin{document}
\title[Compressible magneto-micropolar fluid equations ]{Well-posedness of compressible magneto-micropolar fluid equations}
\author{Cuiman Jia}
\address{School of Mathematical Sciences, Xiamen University, Xiamen, Fujian, 361005, P. R. China}
\email[C. M. Jia]{cmjia0124@163.com}
\author{Zhong Tan}
\address{School of Mathematical Sciences and Fujian Provincial Key Laboratory on Mathematical Modeling and Scientific Computing, Xiamen University,
Xiamen, Fujian , 361005 , P. R. China}
\email[Z. Tan]{ztan85@163.com}
\author{Jianfeng Zhou}
\address{School of Mathematical Sciences, Peking University, Beijing 100871, China}
\email[J. Zhou]{jianfengzhou\_xmu@163.com}
\thanks{Corresponding author: Jianfeng Zhou, jianfengzhou\_xmu@163.com}
\thanks{
This work was supported by the National Natural Science Foundation
 of China (No. 11271305, 11531010).}
\begin{abstract}
We are  concerned with compressible magneto-micropolar fluid equations (\ref{1.1})-(\ref{1.2}). The global existence and large time behaviour of
solutions near a constant state to the magneto-micropolar-Navier-Stokes-Poisson (MMNSP) system is investigated in $\mathbb{R}^3$. By a refined energy
method, the global existence is established under the assumption that the $H^3$ norm of the initial data is small, but the higher order derivatives can
be large. If the initial data belongs to homogeneous Sobolev spaces or homogeneous Besov spaces, we prove the optimal time decay rates of the solution
and its higher order spatial derivatives. Meanwhile, we also obtain the usual $L^p-L^2$ $(1\leq p\leq2)$ type of the decay rates without requiring that
the $L^p$ norm of initial data is small.
\bigbreak
\noindent
{\bf \normalsize Keywords }  {Magneto-micropolar fluid;\, Navier-Stokes-Poisson system;\,global existence;\, time decay rate;\,  homogeneous Sobolev space;\, homogeneous Besov space.}\bigbreak
\end{abstract}
\subjclass[2010]{35Q35; 35M31; 76N15; 35B40; 74A35.}

\maketitle
\section{Introduction}
The dynamic of charged particles of one carrier type (e.g., electrons) in the effect of magnetic field can be described by the
magneto-micropolar-Navier-Stokes-Poisson (MMNSP) system:
\begin{equation}\label{1.1}
\begin{cases}
\partial_t\rho+{\rm\ div}~(\rho u)=0, &\\
\partial_t(\rho u)+{\rm\ div}~(\rho u \otimes u)-(\mu+\zeta)\Delta u-(\mu+\lambda-\zeta)\nabla {\rm\ div}~u+\nabla p&\\
=2\zeta \nabla \times w+(\nabla \times b)\times b+\rho\nabla\Phi,&\\
\partial_t(\rho w)+{\rm\ div}~(\rho u \otimes w)-\mu'\Delta w-(\mu'+\lambda')\nabla {\rm\ div}~w+4\zeta w=2\zeta \nabla \times u,&\\
\partial_tb-\nabla\times(u\times b)=-\sigma \nabla \times(\nabla \times b),&\\
\Delta\Phi=\rho-\bar{\rho},&\\
{\rm\ div}~b=0,\quad t>0, x\in\mathbb{R}^3,&
\end{cases}
\end{equation}
with initial data
\begin{equation}\label{1.2}
(\rho,u,b,w)|_{t=0}=(\rho_0,u_0(x),b_0(x),w_0(x))\longrightarrow(\bar{\rho},0,0,0)\quad\text{as}\  |x|\longrightarrow\infty,
\end{equation}
where the unknowns $\rho=\rho(t,x)\geq 0$, $u=(u_1,u_2,u_3)(t,x)$, $w=(w_1,w_2,w_3)(t,x)$, $p=p(\rho)$ and $b=(b_1,b_2,b_3)(t,x)$ stand for the fluid
density, velocity, micro-rational velocity, pressure and magnetic field, respectively. $\rho_0,u_0(x),b_0(x),w_0(x)$ are given and $b_0$ satisfies the
compatibility condition, i.e. $\textrm{div}~b_0=0$. The pressure $p=p(\rho)$ is a smooth function with $p'(\rho)>0$. The parameters
$\mu,\lambda,\zeta,\mu',\lambda'$ and $\sigma$ are constants denoting the viscosity coefficients of the flows satisfying
\begin{equation*}
\mu,\zeta,\mu',\sigma>0, 2\mu+3\lambda-4\zeta\geq 0, 2\mu'+3\lambda'\geq 0.
\end{equation*}
In the motion of the fluid, due to the greater inertia the ions merely provide a constant charged background $\bar{\rho}>0$. In particular, if $w=0$,  then (\ref{1.1})-(\ref{1.2}) reduces to compressible magnetohydrodynamic equations (MHD), which has been studied extensively \cite{pan,hu1,hu,kaw,ume,vol,wjh} and etc. If $w=b=0$, then (\ref{1.1})-(\ref{1.2}) reduces to compressible   Navier-Stokes equations (NS), many work have been done on the existence, $L^p$-decay estimates with $p\geq2$, stability and etc. for either non-isentropic or isentropic case, see e.g. \cite{duan1,duan2,kaw1,kob,lx,mat1,mat2} and the references therein.

The main purpose of this paper is to investigate the influence of the electric field on the time decay rates of the solution to compressible
magneto-micropolar-Navier-Stokes (MMNS) system. We first review some previous works are related to MMNSP system. When $\rho$ be a constant
(e.g. $\bar{\rho}$), then (\ref{1.1})-(\ref{1.2}) reduces to the incompressible MMNS system:
\begin{equation}\label{1.02}
\begin{cases}
\partial_tu+u\cdot\nabla u-\frac{2\zeta}{\bar{\rho}}\nabla\times w-\frac{1}{\bar{\rho}}(\nabla\times b)\times b
-\frac{\mu+\zeta}{\bar{\rho}}\Delta u-\frac{(\mu+\lambda-\zeta)}{\bar{\rho}}\nabla\textrm{div}~u+\nabla P=0,&\\
\partial_tw+u\cdot\nabla w-\frac{\mu'}{\bar{\rho}}\Delta w-\frac{\mu'+\lambda'}{\bar{\rho}}\nabla\textrm{div}~w
+\frac{4\zeta}{\bar{\rho}}w-\frac{2\zeta}{\bar{\rho}}\nabla\times u=0,&\\
\partial_tb-\nabla\times(u\times b)=-\sigma \nabla \times(\nabla \times b),&\\
\textrm{div}~u=\textrm{div}~b=0,\quad t>0,\ x\in\mathbb{R}^3.
\end{cases}
\end{equation}
Such model was first proposed by Galdi-Rionero \cite{twz14}. The system (\ref{1.02}) enable us to study some physical phenomena that can not be treated by
the classical NS equations for the viscous incompressible fluids, e.g., the motion of liquid crystal, animal blood and dilute aqueous
polymer solutions, etc. Due to this important physical background, rich phenomenon, mathematical complex and challenge, there is a lot of literature
devoted to the mathematical theory of (\ref{1.02}). The existence and uniqueness of the strong solution was established by Rojas-Medar \cite{gala18} in
bounded domain $Q_T:=\Omega\times[0,T]$ with $\Omega\subset\mathbb{R}^d$ $(d=2,3)$ and $0<T<\infty$. Further, Rojas-Medar and Boldrini \cite{yao2} proved
the existence of weak solution to (\ref{1.02}) by the Galerkin method. In addition, the authors also showed that such weak solution is unique in two
dimension. Soon after, Rojas-Medar et al. \cite{gala17} derived global in time existence of strong solution for small initial data. In 2010, S. Gala
\cite{gala} established some improved regularity criteria of weak solutions to  (\ref{1.02}) in Morrey-Campanato spaces. Recently, Tan-Wu-Zhou \cite{zhou}
obtained the global existence and large time behaviour of the solutions to (\ref{1.02}) in $\mathbb{R}^3$. Moreover, the authors also derived a weak
solution in $\mathbb{R}^2$ with large initial data. The further literature on the the incompressible MMNS system is indeed huge and thus out of the scope
of this parer, see \cite{silva7,lyj,sadowski,ma,yao,miao19} and the references therein.

Let us  recall important  mathematical characters on  the incompressible MMNS system, the micropolar Navier-Stokes (MNS) system ($b=0$):
\begin{equation}\label{1.03}
\begin{cases}
\partial_t u-(\chi_1+\chi_2)\Delta u+u\cdot\nabla u+\nabla P-2\chi_1\nabla\times w=0,&\\
\partial_t w-\chi_3\Delta w+u\cdot\nabla w+4\chi_1w-\chi_4\nabla\textrm{div}~ w-2\chi_1\nabla\times u=0,&\\
\textrm{div}~ u=0.&
\end{cases}
\end{equation}
The constants $\chi_i$ $(i=1,2,3,4)$ are the viscosity coefficients. MNS system was first developed by Eringen \cite{gala4} in 1966. In any bounded domain
$\Omega\subset\mathbb{R}^d$ ($d=2,3$), Galdi et al. \cite{twz14} and {\L}ukaszewicz \cite{ma4} derived the existence of weak solution. Furthermore, in
\cite{ma4} (see also Yamaguchi \cite{gala23}) the author also proved the existence and uniqueness of strong solution to (\ref{1.03}). For the
well-posedness of MNS system with full viscosity and partial viscosity in $\mathbb{R}^2$, one may refer to \cite{miao10,ma4}, respectively. In 2012, Miao et al. \cite{miao} proved the global well-posedness for the $3D$ MNS system in the critical Besov spaces by making a suitable transformation of the solutions and using the Fourier localization method. For more details, one can refer to \cite{va6,miao18,br12,loa,br16,br30} and the references therein.

For the compressible case, recently, Guo et al. \cite{twz45} studied the global existence and optimal time decay rates of solution to MMNS system
in $\mathbb{R}^3$ by combining the $L^p-L^q$ estimates for the linearized equations and the Fourier splitting method. Later, Wu-Wang \cite{wuw}
gave a pointwise estimates for the $3D$ MNS system, which exhibited generalized Huygen's principle for the fluid density and fluid momentum as the
compressible NS equation. Besides, we would like to refer to \cite{ww7,ww8, ww9,ww21,ww23,ww30,ww31,ww32,ww33} and the references therein.

Without loss of generality, in this paper, we set the constants $\bar{\rho}=1$, $\mu=\zeta=\frac12$, $\lambda=\lambda'=\mu'=\sigma=1$, and note that
\begin{align*}
&(\nabla \times b)\times b=b\cdot \nabla b-\frac{1}{2}\nabla(|b|^2),\\
&\nabla \times \nabla \times b=\nabla \textrm{div}~b-\Delta b,\\
&\nabla \times (u\times b)=(b\cdot \nabla)u-(u\cdot \nabla)b+u~\textrm{div}~b-b~\textrm{div}~u.
\end{align*}
Now, we define $\varrho:=\rho-\bar{\rho}$, from above, then the system (\ref{1.1})-(\ref{1.2}) can be rewritten as
\begin{align}\label{1.3}
\begin{cases}
\partial_t\varrho+\textrm{div}~u=M_1,& \\
\partial_tu+\gamma\nabla\varrho-\Delta u-\nabla \textrm{div}~u- \nabla\times w-\nabla\Phi=M_2,&\\
\partial_tw+2 w-\Delta w-2\nabla \textrm{div}~w-\nabla \times u=M_3,&\\
\partial_tb-\Delta b= M_4,&\\
\Delta\Phi=\varrho,&\\
\textrm{div}~b=0, t>0, x\in \mathbb{R}^3,
\end{cases}
\end{align}
with initial data
\begin{equation}\label{1.4}
(\varrho,u,w,b)(x,0)=(\varrho_0,u_0,w_0,b_0)\longrightarrow(0,0,0,0) \quad \text{as}\  |x|\longrightarrow \infty,
\end{equation}
where the nonlinear terms $M_i$ $(i=1,2,3,4)$ are defined as
\begin{align*}
&M_1=-\textrm{div}~(\varrho u),\\
&M_2=-u\cdot \nabla u-f(\varrho)[\Delta u+\nabla \textrm{div}~u+ \nabla \times w]-h(\varrho)\nabla\varrho
+g(\varrho)[b\cdot \nabla b-\frac{1}{2}\nabla (|b|^2)],\\
&M_3=-u\cdot \nabla w-f(\varrho)[\Delta w+2\nabla \textrm{div}~w-2 w+\nabla \times u],\\
&M_4=b\cdot \nabla u-u\cdot \nabla b-b\textrm{div}~u.
\end{align*}
Here,
\begin{equation}\label{1.5}
\gamma=\frac{p'(1)}{1},~f(\varrho)=\frac{\varrho}{\varrho+1},
~h(\varrho)=\frac{p'(\varrho+1)}{\varrho+1}- \frac{p'(1)}{1},~g(\varrho)=\frac{1}{\varrho+1} .
\end{equation}
For simplicity, in the following, we set $p'(1)=1$, that is $\gamma=1.$

\noindent\textbf{Notation.} Throughout this paper, $c$ denotes a general constant may vary in different estimate. If the dependence need to be explicitly
stressed, some notations like $c_k, c_N$ will be used. We use $c_0$ denotes the constants depending on the initial data $k, N$ and $s$. We may use $A\sim B$,
if there is $c$ and $c'$ such that $cB \leq A \leq c'B$. We will use $a \lesssim b$ if $a \leq cb$. For simplicity, we denote
\begin{equation*}
\| \nabla^l(f,g,h)\|_{H^k}:=\|\nabla^l f\|_{H^k}+\| \nabla^l g\|_{H^k}
+\| \nabla^l h\|_{H^k},
\end{equation*}
and $\int f dx=\int_{\mathbb{R}^3}f dx$. In addition, $\nabla ^l$ with an integer $l \geq 0$ stands for the usual spatial derivations of order $l$. When
$l < 0$ or $l$ is not a positive integer, $\nabla^l$ stands for $\Lambda^l$ defined by
\begin{equation*}
\Lambda^{l}f:=\mathscr{F}^{-1}(|\xi|^l\mathscr{F}f),
\end{equation*}
where $\mathscr{F}$ is the usual Fourier transform operator and $\mathscr{F}^{-1}$ is its inverse. We use $\dot{H}^{s}(\mathbb{R}^3)$, $s\in\mathbb{R}$ to
denote the homogeneous Sobolev spaces on $\mathbb{R}^3$ with norm $\|\cdot\|_{\dot{H}^{s}}$ defined by
$\| f \|_{\dot{H}^{s}}=\|\Lambda^{s}f\|_{L^2}$, and we use
$H^s(\mathbb{R}^3)$ to denote the usual Sobolev spaces with norm $\|\cdot\|_{\dot{H}^{s}}$, and $L^{p}(\mathbb{R}^3)$ $(1 \leq p \leq \infty)$ to
denote the usual $L^p$ spaces with norm $\| \cdot \|_{L^p}$. Finally, we  introduce the homogeneous Besov space,
let $\varphi \in C_{0}^{\infty}(\mathbb{R}_{\xi}^3)$ be a cut-off function such that $\varphi(\xi)=1$ with $|\xi|\leq 1$, and $\varphi(\xi)=0$ with
$|\xi|\leq 2$. Let $\psi(\xi)=\varphi(\xi)-\varphi(2\xi)$ and $\psi_{j}(\xi)=\psi(2^{-j}\xi)$ for $j\in \mathbb{Z}$. Then, by the construction
$\sum_{j\in \mathbb{Z}}\psi_{j}(\xi)=1$ if $\xi \ne 0$, we set $\dot{\Delta}_{j}f=\mathscr{F}^{-1}*f$, then for $s \in \mathbb{R}$, we
define the homogeneous Besov spaces $\dot{B}^s_{p,q}(\mathbb{R}^N)$ with norm $\|\cdot\|_{\dot{B}^s_{p,q}}$  by
\begin{equation*}
\|f\|_{\dot{B}^s_{p,q}(\mathbb{R}^N)}=
\left\{
  \begin{array}{ll}
  \left(\sum_{-\infty<j<\infty}2^{jsq}\|\dot{\Delta}_jf\|_{L^p(\mathbb{R}^N)}^q\right)^{\frac{1}{q}} & 1\leq p\leq\infty,1\leq q<\infty,\\
 \esssup _{j\in \mathbb{Z}} 2^{js} \|\dot{\Delta}_jf\|_{L^p(\mathbb{R}^N)} & 1\leq p\leq\infty,q=\infty,
  \end{array}
\right.
\end{equation*}
For $N \geq 3$, we define the energy functional by
\begin{equation}\label{1.6}
\mathscr{E}_{N}(t):=\mathop{\sum}_{l=0}^{N}\|\nabla^l(\nabla\Phi,\varrho,u,w,b)\|_{L^2}^2,
\end{equation}
and the corresponding dissipation rate by
\begin{equation}\label{1.7}
\mathscr{D}_{N}(t):=\mathop{\sum}_{l=0}^{N}\|\nabla^l(\varrho,w)\|_{L^2}^2+\sum_{l=0}^{N}\|\nabla^{l+1}(\nabla\Phi,u,w,b)\|_{L^2}^2.
\end{equation}
Our main results are stated in the following theorem.
\begin{Theorem}\label{th1.1}
Assume that initial data $(\nabla\Phi(0),\varrho_0,u_0,w_0,b_0) \in H^N $ for an integer $N \geq 3$ and
\begin{equation}\label{1.10}
\int \varrho_0dx=0.
\end{equation}
Then there exists a constant $\delta_0 > 0$ such that if
$\mathscr{E}_{3}(0) \leq \delta_0$, then the problem \eqref{1.3}-\eqref{1.4} admits a unique solution
$(\nabla\Phi,\varrho,u,w,b)\in C([0,\infty];H^3(\mathbb{R}^3))$  satisfying that for
all $t > 0$
\begin{equation}\label{1.11}
\sup_{0 \leq t \leq \infty}\mathscr{E}_{3}(t)+\int_{0}^{\infty}\mathscr{D}_{3}(\tau) d \tau \leq c\mathscr{E}_{3}(0).
\end{equation}
Furthermore, if $\mathscr{E}_{N}(0) < \infty$ for any $N \geq 4$, then (\ref{1.3})-(\ref{1.4}) admits a unique solution
$(\nabla\Phi,\varrho,u,w,b)\in C([0,\infty];H^N(\mathbb{R}^3))$, and for all $t>0$, there holds
\begin{equation}\label{1.12}
\sup_{0 \leq t \leq \infty}\mathscr{E}_{N}(t)+\int_{0}^{\infty}\mathscr{D}_{N}(\tau) d \tau \leq c\mathscr{E}_{N}(0).
\end{equation}
\end{Theorem}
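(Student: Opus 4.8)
The plan is to combine a local existence result with uniform-in-time a priori estimates obtained by a refined energy method, and then to pass from local to global by a continuity argument; the higher-order bound \eqref{1.12} will come from the same differential inequality once the low-order energy $\mathscr{E}_3$ is under control. Concretely, I would first establish local well-posedness of \eqref{1.3}-\eqref{1.4} in $H^N$ by a standard linearization/iteration scheme, giving a unique solution on a maximal interval $[0,T_*)$ with $\mathscr{E}_N(\cdot)\in C([0,T_*))$. The electrostatic potential is handled by treating $\nabla\Phi=\nabla\Delta^{-1}\varrho$ as an additional unknown: since $M_1=-\mathrm{div}(\varrho u)$ is a divergence and \eqref{1.10} holds, $\int\varrho(t)\,dx\equiv0$, so $\varrho(t)\in\dot H^{-1}$ and $\nabla\Phi(t)\in L^2$ are well defined, and $\partial_t\nabla\Phi=-\nabla\Delta^{-1}\mathrm{div}\,u+\nabla\Delta^{-1}M_1$ with $\nabla\Delta^{-1}\mathrm{div}$ and $\nabla\Delta^{-1}\mathrm{div}(\varrho u)$ bounded on every $H^s$.

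For the a priori estimate, assume the solution lives on $[0,T]$ with $\sup_{[0,T]}\mathscr{E}_3(t)\le\delta$, $\delta$ small and to be fixed. For each $l$ with $0\le l\le N$, apply $\nabla^l$ to \eqref{1.3}, pair the equations with $\nabla^l\varrho$, $\nabla^l u$, $\nabla^l w$, $\nabla^l b$, $\nabla^l\nabla\Phi$ respectively, integrate by parts and sum. Using the cancellation $(\nabla^l\mathrm{div}\,u,\nabla^l\varrho)+(\nabla^{l+1}\varrho,\nabla^l u)=0$, the antisymmetry $(\nabla\times\nabla^l w,\nabla^l u)=(\nabla^l w,\nabla\times\nabla^l u)$, the identity $-(\nabla^l\nabla\Phi,\nabla^l u)=\tfrac12\tfrac{d}{dt}\|\nabla^{l}\nabla\Phi\|_{L^2}^2+(\mathrm{n.l.})$ coming from $\mathrm{div}\,u=-\partial_t\varrho+M_1$ and $\Delta\Phi=\varrho$, and Young's inequality to absorb the $u$-$w$ rotation cross term into $\|\nabla^{l+1}u\|_{L^2}^2$ and $2\|\nabla^l w\|_{L^2}^2$, one gets a basic estimate (with $(\mathrm{n.l.})$ denoting generic nonlinear remainders)
\[
\frac{d}{dt}\mathscr{E}^{0}_N(t)+c\Big(\sum_{l=0}^N\|\nabla^{l+1}(u,b)\|_{L^2}^2+\sum_{l=0}^N\|\nabla^l w\|_{L^2}^2+\sum_{l=0}^N\|\nabla^{l+1}w\|_{L^2}^2\Big)\le(\mathrm{n.l.}),
\]
with $\mathscr{E}^0_N\sim\mathscr{E}_N$. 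This carries no dissipation for $\varrho$, so I introduce the interaction functionals $\mathscr{I}^l(t):=\int_{\mathbb{R}^3}\nabla^l u\cdot\nabla^{l+1}\varrho\,dx$ for $0\le l\le N-1$; pairing $\nabla^l$ of the momentum equation with $\nabla^{l+1}\varrho$ and using $-(\nabla^{l+1}\nabla\Phi,\nabla^{l+1}\varrho)=\|\nabla^l\varrho\|_{L^2}^2$ together with $\partial_t\varrho=-\mathrm{div}\,u+M_1$ gives
\[
\frac{d}{dt}\mathscr{I}^l+\frac{\gamma}{2}\|\nabla^{l+1}\varrho\|_{L^2}^2+\|\nabla^l\varrho\|_{L^2}^2\le C\big(\|\nabla^{l+1}u\|_{L^2}^2+\|\nabla^{l+2}u\|_{L^2}^2+\|\nabla^{l+1}w\|_{L^2}^2\big)+(\mathrm{n.l.}).
\]
Forming $\mathscr{L}_N:=\mathscr{E}^0_N+\eta\sum_{l=0}^{N-1}\mathscr{I}^l$ with $\eta>0$ small, we have $\mathscr{L}_N\sim\mathscr{E}_N$, the error terms on the right are absorbed by the dissipation already at hand, the relation $\|\nabla^{l+1}\nabla\Phi\|_{L^2}\sim\|\nabla^l\varrho\|_{L^2}$ (Calder\'on--Zygmund) recovers the $\nabla\Phi$-part of $\mathscr{D}_N$, and we arrive at $\tfrac{d}{dt}\mathscr{L}_N(t)+c\,\mathscr{D}_N(t)\le(\mathrm{n.l.})$.

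It then remains to estimate the nonlinear terms and close. Every term of $M_1,\dots,M_4$ is at least quadratic; by Gagliardo--Nirenberg interpolation, the product/commutator estimates $\|\nabla^l(fg)\|_{L^2}\lesssim\|f\|_{L^\infty}\|\nabla^l g\|_{L^2}+\|g\|_{L^\infty}\|\nabla^l f\|_{L^2}$, and composition estimates for $f,g,h$ (smooth and vanishing at $0$, with $\|\varrho\|_{L^\infty}\lesssim\sqrt{\mathscr{E}_3}\le1$), one verifies $(\mathrm{n.l.})\lesssim\sqrt{\mathscr{E}_3}\,\mathscr{D}_N$ — the key point being that after distributing the at most $N$ derivatives the highest-order factor is always multiplied by a factor of order $\le2$, hence by a quantity controlled by $\sqrt{\mathscr{E}_3}$, so $\mathscr{E}_N$ never occurs as a coefficient. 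Choosing $\delta_0$ with $C\sqrt{\delta_0}\le c/2$, the case $N=3$ gives $\tfrac{d}{dt}\mathscr{L}_3+\tfrac{c}{2}\mathscr{D}_3\le0$, and integration yields \eqref{1.11}; a routine continuity argument (on the maximal interval where $\mathscr{E}_3(t)\le\delta_0$ one in fact obtains $\mathscr{E}_3(t)\le c\,\mathscr{E}_3(0)<\delta_0$ when $\mathscr{E}_3(0)$ is small enough) then forces $T_*=\infty$, and the local theory upgrades this to $(\nabla\Phi,\varrho,u,w,b)\in C([0,\infty];H^3(\mathbb{R}^3))$. With $\mathscr{E}_3$ uniformly small on $[0,\infty)$, the $N\ge4$ version of the inequality reads $\tfrac{d}{dt}\mathscr{L}_N+c\,\mathscr{D}_N\le C\sqrt{\mathscr{E}_3}\,\mathscr{D}_N\le\tfrac{c}{2}\mathscr{D}_N$, which integrates directly to \eqref{1.12}, no smallness of $\mathscr{E}_N(0)$ being required — this is the sense in which the higher derivatives may be large.

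The main obstacle is the nonlinear bookkeeping at the $H^3$ level: one must check that every contribution of $f(\varrho)[\Delta u+\nabla\mathrm{div}\,u+\nabla\times w]$, $h(\varrho)\nabla\varrho$ and $g(\varrho)(b\cdot\nabla b-\tfrac12\nabla|b|^2)$ after up to three derivatives, together with the errors produced by $\mathscr{I}^l$, closes against $\mathscr{D}_3$ with a small, low-order prefactor and never against $\mathscr{E}_3$ with a dissipation prefactor. A secondary difficulty is the careful handling of the nonlocal operator $\nabla\Delta^{-1}$: its order-zero output $\|\nabla\Phi\|_{L^2}$ belongs to $\mathscr{E}_N$ but not to $\mathscr{D}_N$, so it must be controlled through the momentum-equation coupling rather than by its own dissipation, whereas all its higher-order norms reduce, by Calder\'on--Zygmund, to norms of $\varrho$.
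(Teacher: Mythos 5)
Your proposal follows essentially the same route as the paper: a standard iteration scheme for local existence, $\nabla^l$-energy estimates for $(\varrho,u,w,b,\nabla\Phi)$ with the Poisson coupling rewritten via $\mathrm{div}\,u=-\partial_t\varrho+M_1$ and $\Delta\Phi=\varrho$, the cross functionals $\int\nabla^l u\cdot\nabla^{l+1}\varrho\,dx$ to recover dissipation for $\varrho$ and $\nabla\Phi$, the bound $(\mathrm{n.l.})\lesssim\sqrt{\mathscr{E}_3}\,\mathscr{D}_N$, and a continuity/bootstrap argument to globalize, with the $N\ge4$ bound following once $\mathscr{E}_3$ is uniformly small. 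This is exactly the strategy of Lemmas 3.1--3.2, Theorem 3.1, Remark 3.1, and the two-step proof in Section 5.
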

Theorem \ref{th1.1} will be proved in Section \ref{se5} by using the strategy Guo \cite{twz16}. The key point is that, by constructing some interactive
energy
functionals for $k\geq0$
\begin{align*}
&\frac{d}{dt}\sum_{l=k}^{k+1}\int \nabla^l u\cdot \nabla^{l+1}\varrho dx
+\frac{1}{4}\sum_{l=k}^{k+1}(\|\nabla^{l}\varrho\|_{L^2}^2+\|\nabla^{l+1}\varrho\|_{L^2}^2+\|\nabla^{l+1}\nabla\Phi\|_{L^2}^2+
\|\nabla^{l+1}\nabla\Phi\|_{L^2}^2)\nonumber\\
\leq&c_l\|(\varrho,u,w,b)\|_{H^3}\sum_{l=k}^{k+1}(\|\nabla^{l+1}\varrho\|_{L^2}^2+\|\nabla^{l+2}(w,b)\|_{L^2}^2)
+\sum_{l=k}^{k+1}(\|\nabla^{l+1}u\|_{L^2}^2+2\|\nabla^{l+1}w\|_{L^2}^2+4\|\nabla^{l+2}u\|_{L^2}^2),
\end{align*}
then one can derive the dissipative of $\varrho$ and $\nabla\Phi$. This, together with Lemma \ref{le3.1} implies that for $N\geq3$
\begin{equation}\label{1.09}
\frac d{dt}\mathscr{E}_N(t)+\mathscr{D}_N(t)\lesssim \sqrt{\mathscr{E}_3(t)}\mathscr{D}_N(t).
\end{equation}
In virtue of the smallness of $\mathscr{E}_3(0)$ and the argument of Theorem \ref{th3.1}, the Theorem \ref{th1.1} follows from (\ref{1.09}).

In addition, if the initial data belongs to Negative Sobolev or Besov spaces, we can derive some further decay rates of the solution and its higher order
spatial derivatives to system (\ref{1.3})-(\ref{1.4}). Based on the regularity interpolation method developed in Strain-Guo \cite{twz38},
Guo-Wang \cite{twz17} and Sohinger-Strain \cite{twz36}, we can develop a general energy method, that is, by using a family of scaled energy estimates with
minimum derivative counts and interpolation among them, we deduce that
\begin{equation*}
\frac d{dt}\mathscr{E}_k^{k+2}+\mathscr{D}_k^{k+2}\lesssim \mathscr{E}_3(t)\mathscr{D}_k^{k+2}
+\mathscr{E}_3(t)\sum_{l=k}^{k+1}\|\nabla^{l+1}(u,\nabla u,w)\|_{L^2}^2,
\end{equation*}
where $\mathscr{E}_k^{k+2}$ and $\mathscr{D}_k^{k+2}$ are defined by (\ref{5.05}) and (\ref{5.06}). Appealing to the properties of homogeneous Sobolev
space $\dot{H}^{-s}$ or Besov space $\dot{B}^{-s}_{2,\infty}$, we can bound $\|\nabla^k(\nabla\Phi,\varrho,u,w,b)\|^2_{L^2}$ in term of
$\mathscr{E}_k^{k+2}$. Hence, we are able to have
\begin{Theorem}\label{th1.2}
Let all assumptions of Theorem \ref{th1.1} are in force.
Let $(\nabla\Phi,\varrho,u,w,b)(t)$ is the solution to \eqref{1.3}-\eqref{1.4} constructed in Theorem \ref{th1.1} with $N \geq  3$.
Suppose that $(\nabla\Phi(0),\varrho_0,u_0,w_0,b_0)\in \dot{H}^{-s}$ for some $s \in [0, \frac{3}{2})$ or
$(\nabla\Phi(0),\varrho_0,u_0,w_0,b_0)\in \dot{B}_{2,\infty}^{-s}$
for some $s \in (0, \frac{3}{2}]$, then we have
\begin{equation}\label{1.13}
\|(\nabla\Phi,\varrho,u,w,b)(t)\|_{\dot{H}^{-s}} \leq c_0,
\end{equation}
or
\begin{equation}\label{1.14}
\|(\nabla\Phi,\varrho,u,w,b)(t)\|_{\dot{B}_{2,\infty}^{-s}} \leq c_0,
\end{equation}
Moreover, for $k\geq0$, if $N \geq k+2$, there holds
\begin{equation}\label{1.15}
\|\nabla^k(\nabla\Phi,\varrho,u,w,b)(t)\|_{L^2} \leq c_0(1+t)^{-\frac{k+s}{2}},
\end{equation}
and in particular, for $0\leq k\leq N-3$
\begin{equation}\label{1.16}
\|\nabla^k \varrho(t)\|_{L^2}\leq c_0(1+t)^{-\frac{k+1+s}{2}}.
\end{equation}
\end{Theorem}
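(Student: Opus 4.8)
Following the strategy of Guo \cite{twz16} that underlies Theorem \ref{th1.1}, combined with the negative‑index interpolation technique of Strain--Guo \cite{twz38}, Guo--Wang \cite{twz17} and Sohinger--Strain \cite{twz36}, the plan is as follows. \emph{Step 1 (boundedness of the negative norm).} First I would apply $\Lambda^{-s}$ to each equation of \eqref{1.3}, pair the results in $L^2$ with $\Lambda^{-s}\varrho$, $\Lambda^{-s}u$, $\Lambda^{-s}w$, $\Lambda^{-s}b$ and with $\Lambda^{-s}\nabla\Phi$ (keeping $\nabla\Phi$ as an independent unknown, as in \eqref{1.6}, so that its negative norm is controlled directly by the data), and sum. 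The linear contributions either cancel through the skew‑symmetric couplings $\nabla\times w\leftrightarrow\nabla\times u$, the continuity/pressure pair and the Poisson term, or produce the good quantity $\|\Lambda^{-s}\nabla(u,w,b)\|_{L^2}^2+\|\Lambda^{-s}w\|_{L^2}^2$. For the nonlinear terms $M_1,\dots,M_4$ (each a product, or a product times a smooth function of $\varrho$, possibly under a derivative) I would invoke the Hardy--Littlewood--Sobolev inequality together with the product estimate $\|\Lambda^{-s}(FG)\|_{L^2}\lesssim\|F\|_{\dot H^{s'}}\|G\|_{L^2}$ and its $L^p$ variants, choosing the exponents so that one factor lands in $\dot H^{s'}$ with $0\le s'\le1$ (hence controlled by $\mathscr E_3(t)^{1/2}$) and the other in a space absorbed by the dissipation; the range $s\in[0,\tfrac32)$ is exactly what makes these embeddings available on $\mathbb R^3$. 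Integrating in time and using \eqref{1.11} then gives \eqref{1.13}. For \eqref{1.14} I would run the identical computation with $\Lambda^{-s}$ replaced by the Littlewood--Paley block $\dot\Delta_j$, multiply the resulting estimate by $2^{-js}$ and take $\sup_{j\in\mathbb Z}$; the Bernstein inequalities let this reach the endpoint $s=\tfrac32$, which the $\dot H^{-s}$ scale misses, and also supply the input needed below.

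\emph{Step 2 (algebraic decay of the energy).} Next I would establish the scaled energy inequality announced before Theorem \ref{th1.2}: for each $k\ge0$ with $N\ge k+2$,
\[
\frac{d}{dt}\mathscr E_k^{k+2}+\mathscr D_k^{k+2}\lesssim\mathscr E_3(t)\,\mathscr D_k^{k+2}+\mathscr E_3(t)\sum_{l=k}^{k+1}\|\nabla^{l+1}(u,\nabla u,w)\|_{L^2}^2,
\]
where $\mathscr E_k^{k+2}$, $\mathscr D_k^{k+2}$ are the quantities in \eqref{5.05}--\eqref{5.06}, built as in the proof of Theorem \ref{th1.1} from derivatives of order between $k$ and $k+2$ and from the interactive functionals $\frac{d}{dt}\sum_{l=k}^{k+1}\int\nabla^l u\cdot\nabla^{l+1}\varrho\,dx$ that generate the dissipation of $\varrho$ and $\nabla\Phi$. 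Using the smallness $\mathscr E_3(t)\le c\delta_0$, together with the decay of $\mathscr E_3$ coming from Step 1 at $k=0$ (and, if needed, an induction on $k$ so that the last term is treated as a lower‑level dissipation), the right‑hand side is absorbed and one arrives at $\frac{d}{dt}\mathscr E_k^{k+2}+c\,\mathscr D_k^{k+2}\le0$. Then the interpolation
\[
\|\nabla^k f\|_{L^2}\le\|\Lambda^{-s}f\|_{L^2}^{\frac{1}{k+1+s}}\,\|\nabla^{k+1}f\|_{L^2}^{\frac{k+s}{k+1+s}},
\]
combined with \eqref{1.13} (or \eqref{1.14}) and the equivalence $\mathscr E_k^{k+2}\sim\|\nabla^k(\nabla\Phi,\varrho,u,w,b)\|_{L^2}^2$ modulo lower‑order interactive terms, yields $\mathscr E_k^{k+2}\lesssim c_0^{\,2/(k+1+s)}\,(\mathscr D_k^{k+2})^{(k+s)/(k+1+s)}$, whence the Bernoulli‑type inequality $\frac{d}{dt}\mathscr E_k^{k+2}+c_0(\mathscr E_k^{k+2})^{1+\frac{1}{k+s}}\le0$. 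Integrating this ODE gives $\mathscr E_k^{k+2}\lesssim(1+t)^{-(k+s)}$, which is \eqref{1.15}.

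\emph{Step 3 (improved decay of $\varrho$).} For \eqref{1.16} I would exploit that $\varrho$ carries a full zeroth‑order dissipation inside $\mathscr D_N$: the interactive functional controls $\|\nabla^k\varrho\|_{L^2}^2$ by $\frac{d}{dt}\int\nabla^k u\cdot\nabla^{k+1}\varrho\,dx$ plus $\|\nabla^{k+1}(u,w)\|_{L^2}^2+\|\nabla^{k+2}u\|_{L^2}^2$ and nonlinear remainders. Feeding in the rates for those velocity derivatives already obtained from \eqref{1.15} (which is legitimate precisely when $0\le k\le N-3$), bounding the interactive term by Cauchy--Schwarz against the rates of $\nabla^k u$ and $\nabla^{k+1}\varrho$, and integrating, one upgrades the decay of $\nabla^k\varrho$ by the extra half power, i.e.\ obtains \eqref{1.16}.

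\emph{Main obstacle.} The delicate point is the nonlinear estimate in Step 1: controlling $u\cdot\nabla u$, $f(\varrho)[\Delta u+\nabla\,\textrm{div}\,u+\nabla\times w]$, $h(\varrho)\nabla\varrho$, $g(\varrho)[b\cdot\nabla b-\tfrac12\nabla|b|^2]$ and $b\cdot\nabla u-u\cdot\nabla b-b\,\textrm{div}\,u$ in $\dot H^{-s}$ (and in $\dot B_{2,\infty}^{-s}$) forces a low/high frequency splitting, careful bookkeeping of the Lebesgue exponents so that every factor falls into a space already estimated by $\mathscr E_3$ or $\mathscr D_3$, and — for $s$ near $\tfrac32$ — the passage to the Besov scale, where $L^1(\mathbb R^3)\hookrightarrow\dot B_{2,\infty}^{-3/2}$ rescues the endpoint. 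A secondary nuisance is the bookkeeping that makes the absorption in Step 2 genuinely close for the borderline value $N=k+2$; the remaining interpolation bootstrap is, by now, routine.
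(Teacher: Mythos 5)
Your Steps 2 and Step 3 are close in spirit to the paper, but Step 1 contains a genuine gap that the paper's proof is specifically structured to avoid. You claim that after establishing the $\Lambda^{-s}$ energy identity and estimating the nonlinear terms, ``integrating in time and using \eqref{1.11} then gives \eqref{1.13}.'' This works only for $s\in(0,\tfrac12]$. For $s\in(\tfrac12,\tfrac32)$, the nonlinear bound one actually obtains (this is the paper's Lemma \ref{le4.1}, estimate \eqref{4.2}) has the form
\begin{equation*}
\frac{d}{dt}\|\cdot\|_{\dot H^{-s}}^2+\text{(dissipation)}\lesssim
\|(\varrho,u,w,b)\|_{L^2}^{s-\frac12}\|\nabla(\varrho,u,w,b)\|_{L^2}^{\frac32-s}
\|(\nabla\varrho,\nabla u,\nabla w,\nabla b,\varrho,w,\nabla^2u,\nabla^2w)\|_{L^2}\|\cdot\|_{\dot H^{-s}},
\end{equation*}
and the factor $\|(\varrho,u,w,b)\|_{L^2}^{s-\frac12}$ is controlled only by $\mathscr E_3(t)^{(s-\frac12)/2}$, not by $\mathscr D_3(t)$ (note $\|u\|_{L^2},\|b\|_{L^2}$ do not appear in $\mathscr D_N$). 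Consequently the time integral of the right-hand side cannot be closed from $\int_0^\infty\mathscr D_3\,d\tau<\infty$ alone; you need $\mathscr E_3(\tau)$ to actually decay. But that decay is exactly what you intend to derive later (your Step 2), using \eqref{1.13} as input, so the logic as written is circular. The paper breaks the circle by a bootstrap: it first proves \eqref{1.13}--\eqref{1.15} for $s\le\tfrac12$, then uses the resulting decay rate (with a smaller intermediate exponent $s_0$) inside the time integral to extend \eqref{1.13} to $s\in(\tfrac12,1)$, and then repeats once more to reach $s\in[1,\tfrac32)$ (and $s=\tfrac32$ via the Besov endpoint). Your proposal does not identify this interdependence, and without it the argument fails precisely in the range $s>\tfrac12$ that makes the theorem nontrivial.

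Two smaller points. First, in Step 2 the combination of the level-$k$ energy inequality and the interactive functional yields the clean dissipative form $\frac{d}{dt}\tilde{\mathscr E}_k^{k+2}+\mathscr D_k^{k+2}\le 0$ once $\mathscr E_3$ is small (the paper's \eqref{5.8}); there is no surviving term of the shape $\mathscr E_3\sum\|\nabla^{l+1}(u,\nabla u,w)\|_{L^2}^2$ to absorb by induction on $k$, since all those derivatives already sit inside $\mathscr D_k^{k+2}$. Second, your Step 3 is far more elaborate than needed: the paper obtains \eqref{1.16} immediately from the Poisson relation $\varrho=\mathrm{div}\,\nabla\Phi$, which gives $\|\nabla^k\varrho\|_{L^2}\le\|\nabla^{k+1}\nabla\Phi\|_{L^2}$, and then applies \eqref{1.15} at order $k+1$ (the requirement $N\ge(k+1)+2$ is exactly the stated restriction $k\le N-3$). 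Your interactive-functional argument for the improved $\varrho$ decay is not wrong in spirit, but it is an unnecessary detour.
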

We shall note that, in the usual $L^p-L^2$ approach of studying the optimal decay rates of the solution, it is difficult to show that the $L^p$-norm
of the solution can be preserved along time evolution. An important feature is that the $\dot{H}^{-s}$ or $\dot{B}^{-s}_{2,\infty}$ norm of the solution
is preserved along time evolution. From Hardy-Littlewood-Sobolev inequality (Lemma \ref{le2.4}), for $p\geq2$, we infer that $L^p\subset\dot{H}^{-s}$
with $s=3(\frac1p-\frac12)\in [0,\frac32)$. This, together with Theorem \ref{th1.2}, we have the $L^p-L^2$ type of the optimal decay results. However, the
imbedding theorem can not cover the case $p=1$. To amend this, Sohinger-Strain \cite{twz36} instead introduced the homogeneous Besov space
 $\dot{B}^{-s}_{2,\infty}$ due to the fact that the endpoint imbedding  $L^1\subset\dot{B}^{-s}_{2,\infty}$ holds (Lemma \ref{le2.5}). At this stage, by
Theorem \ref{th1.2}, we have the following corollary of the usual $L^p-L^2$ type of the decay results:
\begin{Corollary}\label{co1.1}
Under the assumptions of Theorem \ref{th1.2}, except that we replace $\dot{H}^{-s}$ or $\dot{B}^{-s}_{2,\infty}$ assumption by
 $(\nabla\Phi(0),\varrho_0,u_0,b_0,w_0)\in L^p$ for some $p\in [1,2]$. Then, for any integer $k\geq0$, if $N\geq k+2$, then there holds
\begin{equation*}
 \|\nabla^k(\nabla\Phi,\varrho,u,b,w)(t)\|_{L^2}\leq C_0(1+t)^{-\frac{k+s_p}{2}},
\end{equation*}
where $s_p$ is defined by
\begin{equation*}
  s_p:=3(\frac1p-\frac12).
\end{equation*}
\end{Corollary}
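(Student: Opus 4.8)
The plan is to deduce Corollary \ref{co1.1} directly from Theorem \ref{th1.2} by realizing the $L^p$ hypothesis as a negative-index regularity hypothesis, splitting into the two cases $p\in(1,2]$ and $p=1$ according to which embedding is available.

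First, suppose $p\in(1,2]$ and set $s_p=3(\tfrac1p-\tfrac12)$, so that $s_p\in[0,\tfrac32)$. By the Hardy--Littlewood--Sobolev inequality (Lemma \ref{le2.4}) one has the continuous embedding $L^p(\mathbb{R}^3)\hookrightarrow \dot{H}^{-s_p}(\mathbb{R}^3)$, hence
\[
\|(\nabla\Phi(0),\varrho_0,u_0,w_0,b_0)\|_{\dot{H}^{-s_p}}\lesssim \|(\nabla\Phi(0),\varrho_0,u_0,w_0,b_0)\|_{L^p}<\infty .
\]
Thus the initial data satisfies the hypothesis of Theorem \ref{th1.2} with $s=s_p\in[0,\tfrac32)$, and estimate \eqref{1.15} gives, for every integer $k\ge0$ with $N\ge k+2$,
\[
\|\nabla^k(\nabla\Phi,\varrho,u,w,b)(t)\|_{L^2}\le c_0(1+t)^{-\frac{k+s_p}{2}},
\]
which is the asserted bound; the constant here depends on $\mathscr{E}_N(0)$ and, through the embedding constant, on $\|(\nabla\Phi(0),\varrho_0,u_0,w_0,b_0)\|_{L^p}$, so we may rename it $C_0$.

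For the endpoint $p=1$ we have $s_1=\tfrac32$, and the embedding $L^1\hookrightarrow\dot{H}^{-3/2}$ fails, so the scale of homogeneous Sobolev spaces is no longer adequate. Instead we invoke the endpoint embedding $L^1(\mathbb{R}^3)\hookrightarrow\dot{B}^{-3/2}_{2,\infty}(\mathbb{R}^3)$ (Lemma \ref{le2.5}), which yields $\|(\nabla\Phi(0),\varrho_0,u_0,w_0,b_0)\|_{\dot{B}^{-3/2}_{2,\infty}}\lesssim \|(\nabla\Phi(0),\varrho_0,u_0,w_0,b_0)\|_{L^1}<\infty$. Since $s=\tfrac32\in(0,\tfrac32]$, this is exactly the Besov hypothesis of Theorem \ref{th1.2}, and \eqref{1.15} again gives $\|\nabla^k(\nabla\Phi,\varrho,u,w,b)(t)\|_{L^2}\le c_0(1+t)^{-\frac{k+3/2}{2}}=c_0(1+t)^{-\frac{k+s_1}{2}}$ for all $k\ge0$ with $N\ge k+2$, completing the proof.

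The argument involves no real obstacle beyond correctly matching the two embeddings to the two parts of Theorem \ref{th1.2}; the only point requiring care is the endpoint $p=1$, where one must pass from the homogeneous Sobolev scale to the Besov scale $\dot{B}^{-s}_{2,\infty}$ precisely because $L^1$ embeds into the latter but not into $\dot{H}^{-3/2}$. One should also observe that the conclusion concerns only the $L^2$ norms $\|\nabla^k(\nabla\Phi,\varrho,u,w,b)(t)\|_{L^2}$; the sharper rate \eqref{1.16} for $\varrho$ transfers verbatim in the same way, although it is not required for the corollary as stated.
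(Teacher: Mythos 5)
Your proof is correct and follows precisely the route the paper indicates in the paragraph preceding Corollary~\ref{co1.1}: for $p\in(1,2]$ apply the Hardy--Littlewood--Sobolev embedding $L^p\hookrightarrow\dot H^{-s_p}$ (Lemma~\ref{le2.4}) and invoke the $\dot H^{-s}$ branch of Theorem~\ref{th1.2}, while for $p=1$ use the endpoint embedding $L^1\hookrightarrow\dot B^{-3/2}_{2,\infty}$ (Lemma~\ref{le2.5}) and invoke the Besov branch. This is the same argument the paper has in mind, handled cleanly with the correct case split at the endpoint.
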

The following are several remarks for our main results:
\begin{Remark}
By the Poisson equation, it holds that
\begin{equation*}
\|\varrho_0\|_{H^N\cap\dot{H}^{-s}}+\|\nabla\Phi(0)\|_{H^N\cap\dot{H}^{-s}}\sim
\|\varrho_0\|_{H^N\cap\dot{H}^{-s}}+\|\Lambda^{-1}\varrho_0\|_{H^N\cap\dot{H}^{-s}},\quad s\geq0
\end{equation*}
and
\begin{equation*}
\|\varrho_0\|_{H^N\cap L^p}+\|\nabla\Phi(0)\|_{H^N\cap L^p}\sim
\|\varrho_0\|_{H^N\cap L^p}+\|\Lambda^{-1}\varrho_0\|_{H^N\cap L^p},\quad p\in(1,2].
\end{equation*}
Compared to the study of MMNS system, the norms of $\Lambda^{-1}\varrho_0$ is additionally required. However, such assumption can be achieved by the
natural neutral condition (\ref{1.10}) (cf. \cite{w7}).
\end{Remark}
\begin{Remark}
In Theorem \ref{th1.1}-\ref{th1.2}, if $k=0,1$,  we can remove the smallness of $\|\nabla\Phi(0)\|_{H^3}$ by assuming only $\nabla\Phi(0)\in H^3$. Then,
we can also derive the time decay rates of (\ref{1.15}) for $k=0,1$. Indeed, similar to (\ref{3.03})-(\ref{3.04}), we can re-estimate the
right-most term in (\ref{3.01}) as following. If $l=0$, we have
\begin{align*}
\int\nabla\Phi\cdot (\varrho u)dx &\leq \|\nabla\Phi\|_{L^6}\|\varrho\|_{L^2}\|u\|_{L^3} \\
 & \leq \|u\|_{H^3}\|(\nabla\nabla\Phi,\varrho)\|^2_{L^2}.
\end{align*}
If $l\geq1$, using Lemma \ref{le2.1}, we arrive at
\begin{align*}
\int\nabla^l\nabla\Phi\cdot \nabla^l(\varrho u)dx \leq&\|\nabla^l\nabla\Phi\|_{L^6} \|\nabla^l(\varrho u)\|_{L^{\frac65}}  \\
 \lesssim& \|\nabla^{l+1}\nabla\Phi\|_{L^2}\left(\sum_{s=0}^{[\frac l2]}\|\nabla^s\varrho\|_{L^3}\|\nabla^{l-s}u\|_{L^2}
+\sum_{s=[\frac l2]+1}^{l}\|\nabla^s\varrho\|_{L^2}\|\nabla^{l-s}u\|_{L^3}\right)\\
 \lesssim&\|\nabla^{l+1}\nabla\Phi\|_{L^2}\left(\sum_{s=0}^{[\frac l2]}\|\nabla^{\hat{\alpha}}\varrho\|^{\hat{\theta}}_{L^2}
\|\nabla^l\varrho\|^{1-\hat{\theta}}_{L^2}\|\varrho\|^{1-\hat{\theta}}_{L^2}\|\nabla^{l+1}u\|^{\hat{\theta}}_{L^2}\right.\\
&+\left.\sum_{s=[\frac l2]+1}^{l}\|\varrho\|^{\check{\theta}}_{L^2}\|\nabla^l\varrho\|^{1-\check{\theta}}_{L^2}
\|\nabla^{\check{\alpha}}u\|^{1-\check{\theta}}_{L^2}\|\nabla^{l+1}u\|^{\check{\theta}}_{L^2}\right)\\
\leq& c_l\|(\varrho,u)\|_{H^3}(\|\nabla^{l+1}(\nabla\Phi,u)\|^2_{L^2}+\|\nabla^l\varrho\|^2_{L^2}),
\end{align*}
where
\begin{equation*}
\hat{\theta}=\frac{l-s}{l+1}, \quad \hat{\alpha}=\frac{l+1}{2(l-s)}-1\in (\text{pending}), \quad \check{\theta}=\frac{l-s}{l},\quad
\check{\alpha}=1-\frac{l}{2s}\in (0,\frac12].
\end{equation*}
Here, we may choose $\hat{\alpha}\in [0,3]$, this implies $l=1$. Combining both inequalities and (\ref{3.01}) yields
\begin{equation*}
-\int\nabla^l\nabla\Phi\cdot\nabla^ludx\geq\frac12\frac{d}{dt}\int|\nabla^l\nabla\Phi|^2dx-c_l\|(\varrho,u)\|_{H^3}\|\nabla^{l+1}(\nabla\Phi,u)\|_{L^2}^2
\end{equation*}
with $l=0,1$.
From above, then by a standard argument we can also obtain (\ref{5.04}) and hence derive the arguments of Theorem \ref{th1.1}-\ref{th1.2} for $k=0,1$.
\end{Remark}
\begin{Remark}
Note that both $\dot{H}^{-s}$ and $\dot{B}^{-s}_{2,\infty}$ norms enhance the decay rates of the solution. The constraint $s\leq\frac32$ in Theorem
\ref{th1.2} comes from applying Lemma \ref{le2.4}-\ref{le2.5} to estimate the nonlinear terms in Section \ref{se4}. For $s>\frac32$, the nonlinear
estimates would not work, this in turn restricts $p\geq1$ in Corollary \ref{co1.1}. Furthermore, let $L^2$ decay rate of higher order spatial derivatives
of the solution is obtained in Corollary \ref{co1.1}, then by Lemma \ref{le2.1}, we can deduce that general optimal $L^q$ $(2\leq q\leq\infty)$ time decay
estimates of the solution.
\end{Remark}
The rest of our paper is organized as follows. First of all, in Section \ref{se2}, we present some useful lemma which will be heavily used in our proof.
Next, in Section \ref{se3}, we prove the  local existence of solution $(\nabla\Phi,\varrho,u,w,b)$. For this aim, we first derive the uniform a priori
estimate of solution in subsection \ref{se3.1}. Further, by constructing  the Cauchy sequence, we establish the local existence and the uniqueness of
solution. In Section \ref{se4}, we derive the evolution of the negative Sobolev and Besov norms of solution. Finally, we prove the
main theorem in Section \ref{se5}.
\section{Preliminary}\label{se2}
Throughout this section, we collect some auxiliary results, some of which have been proven elsewhere. First, we will extensively use the Sobolev
interpolation of the Gagliardo-Nirenberg inequality.
\begin{Lemma}\label{le2.1}
Let $0\leq m,\alpha \leq l$, then we have
\begin{equation}\label{2.1}
\|\nabla^{\alpha}f \|_{L^p}\lesssim \| \nabla ^m f \|_{L^q}^{1-\theta}\| \nabla ^l f \|_{L^r}^{\theta},
\end{equation}
where $0 \leq \theta \leq 1$ and $\alpha$ satisfies
\begin{equation*}
\frac{\alpha}{3}-\frac{1}{p}=(\frac{m}{3}-\frac{1}{q})(1-\theta)+(\frac{l}{3}-\frac{1}{r})\theta.
\end{equation*}
Here, when $p=\infty,$ we require that $0 <\theta <1$.
\end{Lemma}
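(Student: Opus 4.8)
The plan is to prove \eqref{2.1}, which is the Gagliardo--Nirenberg interpolation inequality on $\mathbb{R}^{3}$, by a homogeneous Littlewood--Paley decomposition together with Bernstein's inequalities and an optimization over the splitting scale. (As usual, $p,q,r$ are tacitly assumed to satisfy the standard admissibility restrictions on the Lebesgue exponents; in every application made in this paper one has $q=r=2\le p$, so that all the Bernstein bounds below are legitimate.) By density it suffices to take $f\in\mathcal{S}(\mathbb{R}^{3})$, and for such $f$ we may assume $f\not\equiv0$, so that $A:=\|\nabla^{m}f\|_{L^{q}}$ and $B:=\|\nabla^{l}f\|_{L^{r}}$ both lie in $(0,\infty)$. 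The endpoint values $\theta=0$ and $\theta=1$ reduce \eqref{2.1} to trivial estimates (indeed to equalities), so from now on $0<\theta<1$; this strict inequality is exactly the one that the hypothesis forces when $p=\infty$.

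Using $f=\sum_{j\in\mathbb{Z}}\dot\Delta_{j}f$ and the triangle inequality, $\|\nabla^{\alpha}f\|_{L^{p}}\le\sum_{j}\|\nabla^{\alpha}\dot\Delta_{j}f\|_{L^{p}}$. On a single block the multiplier $|\xi|^{s}$ is, up to the factor $2^{js}$, a smooth multiplier supported away from the origin, so $\|\nabla^{s}\dot\Delta_{j}g\|_{L^{t}}\sim 2^{js}\|\dot\Delta_{j}g\|_{L^{t}}$ for every $s\in\mathbb{R}$ and $1\le t\le\infty$; combining this with Bernstein's inequality $\|\dot\Delta_{j}g\|_{L^{p}}\lesssim 2^{3j(\frac1t-\frac1p)}\|\dot\Delta_{j}g\|_{L^{t}}$ (valid for $t\le p$) and the $L^{t}$-boundedness of the (slightly fattened) projections yields the two block estimates
\begin{equation*}
\|\nabla^{\alpha}\dot\Delta_{j}f\|_{L^{p}}\lesssim 2^{js_{1}}A,\qquad \|\nabla^{\alpha}\dot\Delta_{j}f\|_{L^{p}}\lesssim 2^{js_{2}}B,
\end{equation*}
with $s_{1}=\alpha-m+\tfrac3q-\tfrac3p$ and $s_{2}=\alpha-l+\tfrac3r-\tfrac3p$. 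Setting $D:=(l-\tfrac3r)-(m-\tfrac3q)$, the scaling identity in the statement gives, after a one-line computation, $s_{1}=\theta D$ and $s_{2}=-(1-\theta)D$. If $D=0$ the scaling identity forces $\alpha=m=l$ and the estimate is immediate; otherwise $s_{1}$ and $s_{2}$ are nonzero and of opposite sign.

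Assume $D>0$ (the case $D<0$ is identical after exchanging the two block bounds). For an integer $J$, bound the low frequencies $j\le J$ by the first estimate and the high frequencies $j>J$ by the second:
\begin{equation*}
\|\nabla^{\alpha}f\|_{L^{p}}\lesssim A\sum_{j\le J}2^{j\theta D}+B\sum_{j>J}2^{-j(1-\theta)D}\lesssim 2^{J\theta D}A+2^{-J(1-\theta)D}B,
\end{equation*}
both geometric series converging because $\theta D>0$ and $(1-\theta)D>0$ (here $0<\theta<1$ is used). Choosing $J$ with $2^{JD}\sim B/A$ balances the two terms and gives $\|\nabla^{\alpha}f\|_{L^{p}}\lesssim A^{1-\theta}B^{\theta}$, which is \eqref{2.1}. \textbf{Main obstacle.} The only step needing genuine care is verifying that the prescribed relation among the parameters makes $s_{1}$ and $s_{2}$ equal to $\theta D$ and $-(1-\theta)D$ respectively — this is precisely what guarantees the two geometric series converge — together with keeping track of the exceptional endpoint $p=\infty$, where passing from blocks to $f$ by the triangle inequality still works but summability genuinely requires $0<\theta<1$. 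Everything else (Bernstein, boundedness of the Littlewood--Paley projections, summation of geometric series, and the optimization over $J$) is routine.
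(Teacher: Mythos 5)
The paper does not prove this lemma at all; it simply cites Nirenberg's original paper and Guo--Wang. Your Littlewood--Paley/Bernstein argument is therefore a genuine proof where the paper offers only a reference, and the core of it (two block estimates, exponents $s_1=\theta D$ and $s_2=-(1-\theta)D$, split at a scale $J$ and optimize) is the right mechanism and is carried out correctly in the generic case $D\neq 0$, $0<\theta<1$.

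There is, however, a real gap at the degenerate parameter values, and your dismissal of them is incorrect. When $\theta=0$ the inequality becomes $\|\nabla^{\alpha}f\|_{L^{p}}\lesssim\|\nabla^{m}f\|_{L^{q}}$ with $\alpha-\tfrac3p=m-\tfrac3q$; this is an \emph{equality} only if additionally $\alpha=m$ and $p=q$, whereas in general (e.g.\ $\alpha=0$, $m=1$, $q=2$, $p=6$) it is the critical homogeneous Sobolev embedding $\dot W^{m,q}\hookrightarrow\dot W^{\alpha,p}$, which is not trivial. The same remark applies to $\theta=1$. Likewise, $D=0$ does \emph{not} force $\alpha=m=l$: it only says $l-\tfrac3r=m-\tfrac3q$, and one can have $\alpha<m=l$ with $p>q=r$ (again e.g.\ $\|\nabla f\|_{L^{6}}\lesssim\|\nabla^{2}f\|_{L^{2}}$, a case the paper actually uses). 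In all of these degenerate situations your two block bounds collapse to a single estimate with exponent $0$, the geometric series is constant, and the split-and-optimize step no longer gives any decay; block-by-block Bernstein plus the triangle inequality genuinely fails to prove the critical Sobolev embedding. To close the gap you need a separate argument for these endpoints --- for instance Hardy--Littlewood--Sobolev for the Riesz potential, or a weak-type refinement of the Bernstein estimate followed by real interpolation, or simply a citation to the Sobolev embedding theorem --- and you should also exclude $p=\infty$ there, since the critical embedding into $L^{\infty}$ is false. With that repair (and keeping your stated standing assumption $q,r\le p$, which the Bernstein step requires) the argument is complete.
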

\begin{proof}
See e.g. P.125 in \cite{y60} or Lemma A.1 in \cite{twz17}.
\end{proof}
The following commutator estimate will be used in Section \ref{se3}.
\begin{Lemma}\label{le2.2}
Let $k\geq 1$ be an integer and define the commutator
\begin{equation*}
[\nabla^k,g]h=\nabla^{k}(gh)-g\nabla^{k}h.
\end{equation*}
Then we have
\begin{equation}\label{2.2}
\|[\nabla^k,g]h\|_{L^p}\leq c_k(\| \nabla g\|_{L^{p_1}}\|\nabla^{k-1}h\|_{L^{p_2}}+\|\nabla^k g\|_{L^{p_3}}\|h\|_{L^{p_4}}),
\end{equation}
where $p,p_2,p_3\in (1,+\infty)$ and
\begin{equation*}
\frac1p=\frac1{p_1}+\frac1{p_2}=\frac1{p_3}+\frac1{p_4}.
\end{equation*}
\end{Lemma}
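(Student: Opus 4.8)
The plan is to prove the identity and then the bound via the Leibniz rule. First I would expand $\nabla^k(gh)$ using the general Leibniz rule:
\[
\nabla^k(gh)=\sum_{j=0}^{k}\binom{k}{j}\nabla^j g\,\nabla^{k-j}h,
\]
so that
\[
[\nabla^k,g]h=\nabla^k(gh)-g\,\nabla^k h=\sum_{j=1}^{k}\binom{k}{j}\nabla^j g\,\nabla^{k-j}h.
\]
Thus the commutator only involves terms in which at least one derivative has landed on $g$; the ``worst'' term $g\nabla^k h$ has been removed. This is the structural point that makes the estimate possible: every summand carries a genuine derivative on $g$, so one can hope to bound it by $\|\nabla g\|\cdots$ rather than $\|g\|\cdots$.

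Next I would estimate each summand by Hölder's inequality and interpolate. For a fixed $1\le j\le k$, write $\nabla^j g=\nabla(\nabla^{j-1}g)$ and apply the Gagliardo–Nirenberg inequality (Lemma \ref{le2.1}) to interpolate $\nabla^j g$ between $\nabla g$ and $\nabla^k g$, and $\nabla^{k-j}h$ between $h$ and $\nabla^{k-1}h$. Concretely, pick exponents $q_1,q_2$ (depending on $j$) with $\frac1p=\frac1{q_1}+\frac1{q_2}$ and, for suitable $\theta\in[0,1]$,
\[
\|\nabla^j g\|_{L^{q_1}}\lesssim \|\nabla g\|_{L^{p_1}}^{1-\theta}\|\nabla^k g\|_{L^{p_3}}^{\theta},\qquad
\|\nabla^{k-j}h\|_{L^{q_2}}\lesssim \|\nabla^{k-1}h\|_{L^{p_2}}^{1-\theta}\|h\|_{L^{p_4}}^{\theta},
\]
where the scaling exponents are chosen so that the two interpolations use the \emph{same} $\theta$ and the Hölder relation $\frac1p=\frac1{q_1}+\frac1{q_2}$ is consistent with $\frac1p=\frac1{p_1}+\frac1{p_2}=\frac1{p_3}+\frac1{p_4}$. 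Multiplying the two bounds and applying Young's inequality $X^{1-\theta}Y^\theta\le (1-\theta)X+\theta Y\lesssim X+Y$ to the products $\big(\|\nabla g\|_{L^{p_1}}\|\nabla^{k-1}h\|_{L^{p_2}}\big)^{1-\theta}\big(\|\nabla^k g\|_{L^{p_3}}\|h\|_{L^{p_4}}\big)^{\theta}$ yields the desired bound for that summand; summing over $1\le j\le k$ and absorbing the binomial coefficients into $c_k$ finishes the proof.

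The main obstacle is the bookkeeping of exponents: one must check that for each $j$ there genuinely exists an admissible pair $(q_1,q_2)$ and a $\theta\in[0,1]$ making both Gagliardo–Nirenberg interpolations valid under the dimensional (scaling) constraint of Lemma \ref{le2.1}, while respecting the restrictions $p,p_2,p_3\in(1,\infty)$ (needed so that the relevant endpoint cases of Gagliardo–Nirenberg and Hölder are not degenerate). This is a standard but slightly delicate computation — the natural choice is $\theta=\frac{j-1}{k-1}$ for $k\ge 2$ (with the case $k=1$ trivial, since then $[\nabla,g]h=\nabla g\,h$), after which the exponent identities can be verified directly. A clean alternative, which I would mention, is simply to cite the classical Kato–Ponce / Moser-type commutator estimate from the literature (e.g. \cite{y60,twz17}), of which this is the homogeneous version.
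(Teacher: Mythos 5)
Your proof is correct and takes essentially the same route the paper intends: the paper's proof is a one-liner (``for $p=p_2=p_3=2$ use Lemma \ref{le2.1}; for general exponents see the cited references''), and your Leibniz expansion plus Gagliardo--Nirenberg interpolation with $\theta=\frac{j-1}{k-1}$, H\"older, and Young is exactly the argument those references carry out, with your exponent bookkeeping checking out ($\frac1{q_1}+\frac1{q_2}=(1-\theta)\frac1p+\theta\frac1p=\frac1p$). You simply give the details the paper delegates to the literature.
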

\begin{proof}
If $p=p_2=p_3=2$, then (\ref{2.2}) can be proved by Lemma \ref{le2.1}. For the general cases, one can refer \cite{yj14,wlt}.
\end{proof}
\begin{Lemma}\label{le2.3}
Let $f(\cdot)$ be a smooth function, $w:\mathbb{R}^3\longrightarrow \mathbb{R}^n$ $(n\geq1)$ satisfies $\|w\|_{H^{3}(\mathbb{R}^3)}\leq\delta\ll1.$ If
$f(w)\sim w$ or $f(0)=0$, then for any integer $k\geq 0$, we have
\begin{align}
\|\nabla ^{k}f(w)\|_{L^{\infty}}&\leq c_k  \|\nabla ^{k}w\|_{L^{2}}^{\frac{1}{4}} \|\nabla ^{k+2}w\|_{L^{2}}^{\frac{3}{4}},\label{2.3}\\
&\stackrel{\text{or}}{\leq}c_k\|\nabla^kw\|_{L^{\infty}},\label{2.03}
\end{align}
and
\begin{equation}\label{2.4}
\|\nabla ^{k}f(w)\|_{L^{2}}\leq c_k\|\nabla ^{k}w\|_{L^{2}}.
\end{equation}
\end{Lemma}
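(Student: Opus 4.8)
The plan is to establish the three estimates \eqref{2.3}, \eqref{2.03}, \eqref{2.4} by reducing everything to the chain rule, the Gagliardo--Nirenberg inequality of Lemma \ref{le2.1}, and the smallness of $\|w\|_{H^3}$. First I would prove \eqref{2.4}. Writing out $\nabla^k f(w)$ via the Fa\`a di Bruno formula, every term is a sum over partitions of $k$ of a product $f^{(j)}(w)\,\nabla^{k_1}w\cdots\nabla^{k_j}w$ with $k_1+\dots+k_j=k$ and each $k_i\ge 1$. Since $f$ is smooth and $\|w\|_{L^\infty}\lesssim\|w\|_{H^3}\le\delta$ is bounded, $\|f^{(j)}(w)\|_{L^\infty}\le c$; when $f(0)=0$ (or $f(w)\sim w$) the single term $f'(w)\nabla^k w$ is controlled directly by $\|\nabla^k w\|_{L^2}$. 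For the remaining products with $j\ge 2$ I would put the one factor carrying the most derivatives in $L^2$ and the other $j-1$ factors in $L^\infty$, then use Lemma \ref{le2.1} to interpolate: $\|\nabla^{k_i}w\|_{L^\infty}\lesssim\|w\|_{L^2}^{1-\theta_i}\|\nabla^?w\|_{L^2}^{\theta_i}$ and similarly in $L^2$, arranging the exponents so that the total homogeneity is $k$ derivatives and the total is $\lesssim\|\nabla^k w\|_{L^2}$ times a positive power of $\|w\|_{H^3}\le\delta$; absorbing that small factor gives \eqref{2.4}. (One must check that all intermediate derivative orders needed fall within $H^3$, which they do because each individual $k_i<k$ forces the interpolation to use at most $\nabla^3 w$ when $k$ is small and is harmless for larger $k$ since higher norms only help.)

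For \eqref{2.3} I would argue the same way but place the top-derivative factor in $L^\infty$ rather than $L^2$. Each term of $\nabla^k f(w)$ then contributes $\|f^{(j)}(w)\|_{L^\infty}\prod_i\|\nabla^{k_i}w\|_{L^{p_i}}$ with one $p_i=\infty$ and the rest chosen so the H\"older exponents close. Using Lemma \ref{le2.1} in the form $\|\nabla^\alpha w\|_{L^\infty}\lesssim\|\nabla^k w\|_{L^2}^{1-\theta}\|\nabla^{k+2}w\|_{L^2}^{\theta}$ with the exponent $\theta=3/4$ dictated by $\tfrac\alpha3=(\tfrac k3-\tfrac12)(1-\theta)+(\tfrac{k+2}3-\tfrac12)\theta$ at $\alpha=k$, and handling the lower-order factors by absorbing their $\|w\|_{H^3}\le\delta$ contributions, one arrives at $\|\nabla^k f(w)\|_{L^\infty}\le c_k\|\nabla^k w\|_{L^2}^{1/4}\|\nabla^{k+2}w\|_{L^2}^{3/4}$. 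The alternative bound \eqref{2.03} is the cheap version: again bound $\|f^{(j)}(w)\|_{L^\infty}\le c$, keep the factor $\nabla^k w$ in $L^\infty$ exactly, and for $j\ge 2$ the other factors carry strictly fewer than $k$ derivatives and are controlled by $\|w\|_{H^3}\le\delta$ through Lemma \ref{le2.1}, so the whole thing is $\le c_k\|\nabla^k w\|_{L^\infty}$ after absorbing $\delta$.

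I expect the main obstacle to be purely bookkeeping rather than conceptual: carrying out the Fa\`a di Bruno expansion and, for each partition of $k$ into parts $k_1,\dots,k_j$, choosing a legal set of Lebesgue exponents and interpolation parameters $\theta_i\in[0,1]$ in Lemma \ref{le2.1} so that (i) the H\"older relation $\sum 1/p_i=1/p$ holds with $p=\infty$ or $p=2$ as appropriate, (ii) the scaling identity for $\alpha=k$ (or $\alpha=k_i$) is satisfied, and (iii) every derivative order invoked sits inside $H^3$ (so that the factor multiplying $\|\nabla^k w\|$-type terms is a genuine power of the small quantity $\|w\|_{H^3}$). Once the smallness is in hand each such factor is $\le\delta^{\text{(positive power)}}\ll1$ and can be absorbed, so no iteration or continuity argument is needed — the estimates are pointwise-in-time. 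The case distinction $f(w)\sim w$ versus $f(0)=0$ only matters for the single linear term $f'(w)\nabla^k w$, which in the first case is $\lesssim\|\nabla^k w\|$ directly and in the second is handled by writing $f(w)=\int_0^1 f'(sw)\,ds\,\cdot w$ so that $f(w)$ itself vanishes to first order.
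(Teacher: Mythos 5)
Your overall strategy --- Fa\`a di Bruno expansion, H\"older, Gagliardo--Nirenberg interpolation, and absorption of the small factor $\|w\|_{H^3}\le\delta$ --- is the same as the paper's, and the case split between the single linear term $f'(w)\nabla^k w$ and the higher-partition terms with $j\ge2$ is the right structure. However, the specific H\"older split and interpolation partners you propose do not actually close, and this is a genuine gap, not mere bookkeeping.

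For \eqref{2.4} with $j\ge2$ you propose to put the factor with the most derivatives in $L^2$ and the remaining $j-1$ factors in $L^\infty$, then interpolate each $L^\infty$ factor against $\|\nabla^k w\|_{L^2}$. Work out the exponents: by Lemma \ref{le2.1} with the low endpoint $\|w\|_{L^2}$, the factor $\|\nabla^{k_i}w\|_{L^\infty}$ costs $\|\nabla^k w\|_{L^2}^{\theta_i}$ with $\theta_i=(k_i+3/2)/k$, while the $L^2$ factor costs $\theta_1=k_1/k$. The total is $\sum_i\theta_i = 1 + 3(j-1)/(2k)>1$. You therefore land on $\|\nabla^k f(w)\|_{L^2}\lesssim\|w\|_{H^3}^{\cdot}\|\nabla^k w\|_{L^2}^{\beta}$ with $\beta>1$, which cannot be reduced to $\lesssim\|\nabla^k w\|_{L^2}$ since $\|\nabla^k w\|_{L^2}$ is not assumed bounded for $k>3$. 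The same difficulty affects your sketch of \eqref{2.03}: for $j\ge2$ there is no factor $\nabla^k w$ at all, and the lower-order factors such as $\|\nabla^{k-1}w\|_{L^\infty}$ are \emph{not} bounded by $\|w\|_{H^3}$ for large $k$, so the statement that they are ``controlled by $\|w\|_{H^3}\le\delta$ through Lemma \ref{le2.1}'' is false as written.

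The paper avoids this by choosing a balanced H\"older split and a different low-order interpolation anchor. For \eqref{2.4} it uses $\|\nabla^{q_1}w\|_{L^{2k/q_1}}\cdots\|\nabla^{q_m}w\|_{L^{2k/q_m}}$ (note $\sum q_i/(2k)=1/2$ as required) and then interpolates each factor as $\|\nabla^{q_i}w\|_{L^{2k/q_i}}\lesssim\|\nabla w\|_{L^3}^{1-q_i/k}\|\nabla^k w\|_{L^2}^{q_i/k}$; because $\sum q_i=k$ the exponents on $\|\nabla^k w\|_{L^2}$ sum to exactly $1$ and the surplus is $\|\nabla w\|_{L^3}^{m-1}\lesssim\|w\|_{H^3}^{m-1}$. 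The analogous split for \eqref{2.03} uses $\|\nabla^{q_i}w\|_{L^\infty}\lesssim\|\nabla w\|_{L^3}^{1-q_i/k}\|\nabla^k w\|_{L^\infty}^{q_i/k}$ and again gives exactly one power of $\|\nabla^k w\|_{L^\infty}$. The choice of $\|\nabla w\|_{L^3}$ (rather than $\|w\|_{L^2}$) as the low endpoint is the crucial detail that makes the exponent budget close at precisely $\theta$'s summing to one. Finally, \eqref{2.3} can either be deduced from \eqref{2.03} by a single application of Lemma \ref{le2.1}, $\|\nabla^k w\|_{L^\infty}\lesssim\|\nabla^k w\|_{L^2}^{1/4}\|\nabla^{k+2}w\|_{L^2}^{3/4}$, or taken from the cited reference as the paper does; your direct Fa\`a di Bruno route for \eqref{2.3} would need the same corrected interpolation bookkeeping as \eqref{2.03}.
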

\begin{proof}
For the proof of (\ref{2.3}), one can refer Lemma \ref{le3.1} in \cite{twz17}. Here, we only prove (\ref{2.03})-(\ref{2.4}). It is trivial for the case
$k=0$. If
$k\geq1$ by (\ref{2.1}) and the H\"{o}lder's inequality, we get
\begin{align}\label{2.5}
\|\nabla^kf(w)\|_{L^2}&\leq\|\text{a\ sum\ of\ products}\ f^{(q_1,\cdots,q_m)}\nabla^{q_1}w\nabla^{q_2}w\cdots\nabla^{q_m}w\|_{L^2}\nonumber \\
& \lesssim \|\nabla^{q_1}w\|_{L^{2k/q_1}}\|\nabla^{q_2}w\|_{L^{2k/q_2}}\cdots\|\nabla^{q_m}w\|_{L^{2k/q_m}}\nonumber \\
& \lesssim \|\nabla w\|_{L^3}^{1-\frac{q_1}{k}}\|\nabla^kw\|_{L^2}^{\frac{q_1}{k}}\cdots
\|\nabla w\|_{L^3}^{1-\frac{q_m}{k}}\|\nabla^kw\|_{L^2}^{\frac{q_m}{k}}\nonumber \\
& \lesssim  \|w\|_{H^3}^{m-1}\|\nabla^kw\|_{L^2}
\end{align}
with $1\leq m\leq k$, $q_i\geq1$ and $\sum_{i=1}^{m}q_i=k$. This implies (\ref{2.4}).

By the same way, we infer that
\begin{align*}
\|\nabla^kf(w)\|_{L^{\infty}}&\leq\|\text{a\ sum\ of\ products}\ f^{(q_1,\cdots,q_m)}\nabla^{q_1}w\nabla^{q_2}w\cdots\nabla^{q_m}w\|_{L^{\infty}}\nonumber \\
& \lesssim \|\nabla^{q_1}w\|_{L^{\infty}}\|\nabla^{q_2}w\|_{L^{\infty}}\cdots\|\nabla^{q_m}w\|_{L^{\infty}}\nonumber \\
& \lesssim \|\nabla w\|_{L^3}^{1-\frac{q_1}{k}}\|\nabla^kw\|_{L^{\infty}}^{\frac{q_1}{k}}\cdots
\|\nabla w\|_{L^3}^{1-\frac{q_m}{k}}\|\nabla^kw\|_{L^{\infty}}^{\frac{q_m}{k}}\nonumber \\
& \lesssim  \|w\|_{H^3}^{m-1}\|\nabla^kw\|_{L^{\infty}},
\end{align*}
that is (\ref{2.03}).
\end{proof}
If $s\in [0,\frac32)$, the Hardy-Littlewood-Sobolev theorem implies the following $L^p$ type inequality.
\begin{Lemma}\label{le2.4}
Let $0\leq s <\frac{3}{2}, 1<p \leq 2$ with $\frac{1}{2}+\frac{s}{3}=\frac{1}{p},$ then
\begin{equation}\label{2.6}
\| f \|_{\dot{H}^{-s}}\lesssim \| f \|_{L^p}.
\end{equation}
\end{Lemma}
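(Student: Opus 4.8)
The plan is to realize the operator $\Lambda^{-s}$ as a Riesz potential and then invoke the Hardy--Littlewood--Sobolev inequality, exactly as the sentence preceding the statement suggests. For $s=0$ we have $p=2$ and the claim is merely the Plancherel identity, so assume $0<s<\tfrac32$. By Plancherel's theorem,
\begin{equation*}
\|f\|_{\dot{H}^{-s}}=\|\Lambda^{-s}f\|_{L^2}=\| \, |\xi|^{-s}\mathscr{F}f \, \|_{L^2}.
\end{equation*}
Since $0<s<3$, the tempered distribution whose Fourier transform is $|\xi|^{-s}$ equals $c_{3,s}\,|x|^{s-3}$ for an explicit dimensional constant $c_{3,s}>0$; hence $\Lambda^{-s}f$ coincides with the Riesz potential $I_sf:=c_{3,s}\,|x|^{s-3}\ast f$. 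I would first establish this for $f$ in the Schwartz class and then extend the final estimate by density, which is legitimate since $L^p\cap L^2$ is dense in $L^p$ for $1<p\le 2$.

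Next I would apply the Hardy--Littlewood--Sobolev inequality in the standard form: for $1<p<q<\infty$ with $\tfrac1q=\tfrac1p-\tfrac s3$ one has $\|I_sf\|_{L^q}\lesssim\|f\|_{L^p}$. Choosing $q=2$ forces $\tfrac1p=\tfrac12+\tfrac s3$, which is precisely the hypothesis; moreover the admissibility requirements $p>1$ and $p<q=2$ translate exactly into $0<s<\tfrac32$, so we remain in the allowed range of exponents. Combining the two displays yields
\begin{equation*}
\|f\|_{\dot{H}^{-s}}=\|I_sf\|_{L^2}\lesssim\|f\|_{L^p},
\end{equation*}
which is the assertion.

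I do not anticipate a genuine obstacle here. The only points requiring a little care are the precise normalization in the identity $\mathscr{F}^{-1}(|\xi|^{-s})=c_{3,s}|x|^{s-3}$ (and the fact that $0<s<3$ keeps the kernel $|x|^{s-3}$ locally integrable, so the convolution is well defined on Schwartz functions), together with the index bookkeeping that makes the endpoint exclusion $p=1$, i.e.\ $s=\tfrac32$, consistent with the restriction in the statement; this is also exactly the range in which $\dot{H}^{-s}(\mathbb{R}^3)$ is a well-behaved space of tempered distributions. Alternatively, one can bypass the explicit kernel computation entirely and simply cite the mapping property $\Lambda^{-s}\colon L^p(\mathbb{R}^3)\to L^2(\mathbb{R}^3)$ from a standard reference on Riesz potentials, which delivers the estimate in one line.
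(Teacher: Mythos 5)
Your argument is correct and is exactly the Hardy--Littlewood--Sobolev route that the paper has in mind: the paper proves the lemma only by citing Stein and Grafakos, and its preceding sentence (``the Hardy--Littlewood--Sobolev theorem implies\dots'') already signals this approach. Your rendering of $\Lambda^{-s}$ as a Riesz potential, the Plancherel step, and the index bookkeeping ($q=2$, $1/p=1/2+s/3$, $0<s<3/2$ from $1<p<2$) are all accurate, so nothing further is needed.
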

\begin{proof}
See Theorem 1, p.119 in \cite{wyj31} or \cite{wy4} for instance.
\end{proof}
In addition, for $s\in (0.\frac32]$, we will use the following result.
\begin{Lemma}\label{le2.5}
Let $0< s \leq\frac{3}{2}, 1\leq p < 2$ with $\frac{1}{2}+\frac{s}{3}=\frac{1}{p},$ then
\begin{equation}\label{2.7}
\|f\|_{\dot{B}_{2,\infty}^{-s}}\lesssim \| f \|_{L^p}.
\end{equation}
\end{Lemma}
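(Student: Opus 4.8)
The plan is to prove the endpoint embedding $L^p\hookrightarrow\dot B^{-s}_{2,\infty}$ directly from the Littlewood--Paley definition of the norm together with Bernstein's inequality. Since the third index of $\dot B^{-s}_{2,\infty}$ is $\infty$, it suffices to bound the quantity $2^{-js}\|\dot\Delta_j f\|_{L^2}$ \emph{uniformly} in $j\in\mathbb{Z}$; no summability of the dyadic pieces over $j$ is needed. This is exactly the feature that lets the argument reach the endpoint $p=1$, $s=\frac32$, which is not accessible through the Hardy--Littlewood--Sobolev inequality of Lemma \ref{le2.4}.

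First I would record the uniform $L^p$-boundedness of the dyadic projections. Since $\dot\Delta_j f=\mathscr{F}^{-1}(\psi_j\mathscr{F}f)=2^{3j}\check\psi(2^j\cdot)*f$ with $\check\psi=\mathscr{F}^{-1}\psi$ a Schwartz function, Young's convolution inequality together with the scaling identity $\|2^{3j}\check\psi(2^j\cdot)\|_{L^1}=\|\check\psi\|_{L^1}$ gives
\[
\|\dot\Delta_j f\|_{L^p}\lesssim\|f\|_{L^p}\qquad(1\leq p\leq\infty),
\]
with a constant independent of $j$. Next, because $\mathscr{F}(\dot\Delta_j f)$ is supported in the annulus $\{|\xi|\sim 2^j\}$, Bernstein's inequality applies with exponents $p\leq 2$ and yields
\[
\|\dot\Delta_j f\|_{L^2}\lesssim 2^{3j(\frac1p-\frac12)}\|\dot\Delta_j f\|_{L^p}\lesssim 2^{3j(\frac1p-\frac12)}\|f\|_{L^p}.
\]

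Finally, the hypothesis $\frac12+\frac s3=\frac1p$ is precisely $s=3(\frac1p-\frac12)$, so multiplying the last estimate by $2^{-js}$ makes the powers of $2^j$ cancel exactly:
\[
2^{-js}\|\dot\Delta_j f\|_{L^2}\lesssim 2^{j(3(\frac1p-\frac12)-s)}\|f\|_{L^p}=\|f\|_{L^p},
\]
uniformly in $j$. Taking the supremum over $j\in\mathbb{Z}$ in the definition of the $\dot B^{-s}_{2,\infty}$ norm gives $\|f\|_{\dot B^{-s}_{2,\infty}}\lesssim\|f\|_{L^p}$, which is (\ref{2.7}). There is no genuine obstacle here; the only points requiring care are that one uses Bernstein's inequality (valid for all $p\geq 1$, in particular $p=1$) rather than any $L^2$-based bound, and that the $\ell^\infty$ structure in $j$ removes the need to control a borderline-divergent sum over the low frequencies. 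Alternatively, this embedding is standard and may simply be cited, e.g. from Sohinger--Strain \cite{twz36} or from the Littlewood--Paley literature.
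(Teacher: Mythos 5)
Your proof is correct and is the standard Littlewood--Paley argument: uniform $L^p$-boundedness of $\dot\Delta_j$ by Young's inequality, Bernstein's inequality for the annular frequency localization, and the exact cancellation of the dyadic factor under the scaling relation $s=3(\tfrac1p-\tfrac12)$, followed by taking the supremum in $j$. The paper does not give its own proof but simply cites Lemma 4.6 of Sohinger--Strain \cite{twz36}, and that reference's proof is this same computation, so your approach matches.
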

\begin{proof}
See Lemma 4.6 in \cite{twz17}.
\end{proof}
The following two special Sobolev interpolation will be used in the proof of Theorem \ref{th1.2}.
\begin{Lemma}\label{le2.6}
Let $s\geq 0$, and $l\geq 0$, then we have
\begin{equation}\label{2.8}
\|\nabla^l f \|_{L^2}\leq \| \nabla^{l+1} f \|_{L^2}^{1-\theta}\| f \|_{\dot{H}^{-s}}^{\theta},
\end{equation}
where $\theta=\frac{1}{l+1+s}$.
\end{Lemma}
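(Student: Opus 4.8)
**Proof proposal for Lemma \ref{le2.6} (the interpolation $\|\nabla^l f\|_{L^2}\le \|\nabla^{l+1}f\|_{L^2}^{1-\theta}\|f\|_{\dot H^{-s}}^\theta$ with $\theta=\frac1{l+1+s}$).**

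The plan is to work entirely on the Fourier side, where the fractional and negative derivatives are just multipliers. By Plancherel's theorem, $\|\nabla^l f\|_{L^2}^2=\int_{\mathbb R^3}|\xi|^{2l}|\hat f(\xi)|^2\,d\xi$, and likewise $\|\nabla^{l+1}f\|_{L^2}^2=\int|\xi|^{2(l+1)}|\hat f|^2\,d\xi$ and $\|f\|_{\dot H^{-s}}^2=\int|\xi|^{-2s}|\hat f|^2\,d\xi$. The key pointwise observation is that for each fixed $\xi\ne0$ the exponents satisfy
\begin{equation*}
|\xi|^{2l}=\bigl(|\xi|^{2(l+1)}\bigr)^{1-\theta}\bigl(|\xi|^{-2s}\bigr)^{\theta}
\iff 2l=(1-\theta)\,2(l+1)+\theta(-2s),
\end{equation*}
and solving the right-hand identity for $\theta$ gives exactly $l=(l+1)-\theta(l+1)-\theta s$, i.e. $\theta(l+1+s)=1$, which is the stated value $\theta=\frac1{l+1+s}$. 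So the only real content is to split the integrand $|\xi|^{2l}|\hat f(\xi)|^2$ as a product of the two pieces raised to the powers $1-\theta$ and $\theta$, and then apply Hölder's inequality in the integral.

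Concretely, first I would write
\begin{equation*}
\int |\xi|^{2l}|\hat f(\xi)|^2\,d\xi=\int\Bigl(|\xi|^{2(l+1)}|\hat f(\xi)|^2\Bigr)^{1-\theta}\Bigl(|\xi|^{-2s}|\hat f(\xi)|^2\Bigr)^{\theta}\,d\xi,
\end{equation*}
using $|\hat f|^2=(|\hat f|^2)^{1-\theta}(|\hat f|^2)^\theta$ together with the exponent identity above. Then I apply Hölder's inequality with conjugate exponents $\frac1{1-\theta}$ and $\frac1\theta$ (both $\ge1$ since $\theta\in[0,1]$, and $\theta\in(0,1]$ here because $l\ge0$, $s\ge0$ force $l+1+s\ge1$) to bound this by
\begin{equation*}
\Bigl(\int|\xi|^{2(l+1)}|\hat f(\xi)|^2\,d\xi\Bigr)^{1-\theta}\Bigl(\int|\xi|^{-2s}|\hat f(\xi)|^2\,d\xi\Bigr)^{\theta}=\|\nabla^{l+1}f\|_{L^2}^{2(1-\theta)}\,\|f\|_{\dot H^{-s}}^{2\theta}.
\end{equation*}
Taking square roots yields the claimed inequality (with constant $1$).

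There is essentially no hard part here — the statement is a one-line Hölder argument on the Fourier transform — but the point that deserves a moment of care is the degenerate case $\theta=1$, which occurs precisely when $l=0$ and $s=0$; there the inequality reads $\|f\|_{L^2}\le\|f\|_{L^2}$ and is trivial, and one must note that the Hölder step still makes sense (the exponent $\frac1{1-\theta}$ is then $\infty$, and the factor $\|\nabla^{l+1}f\|_{L^2}^{2(1-\theta)}$ is $1$). Apart from that boundary check, and the implicit assumption that the right-hand side is finite (otherwise there is nothing to prove), the argument is complete; I would simply remark that it is the standard Fourier-space interpolation and, if desired, cite the analogous Lemma in \cite{twz17}.
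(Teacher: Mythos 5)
Your argument is exactly the one the paper intends: the paper's proof is the single sentence ``It follows directly by the Parseval theorem and the H\"older's inequality,'' and you have carried out precisely that Plancherel-plus-H\"older computation, including the correct exponent identity and the boundary case $\theta=1$. The proposal is correct and takes the same approach.
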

\begin{proof}
It follows directly by the Parseval theorem and the H\"{o}lder's inequality.
\end{proof}
\begin{Lemma}\label{le2.7}
Let $s >0$,and $l \geq 0$, then we have
\begin{equation}\label{2.9}
\|\nabla^l f \|_{L^2}\leq \|\nabla^{l+1}f\|_{L^2}^{1-\theta}\|f\|_{\dot{B}_{2,\infty}^{-s}}^{\theta},
\end{equation}
where $\theta=\frac{1}{l+1+s}$.
\end{Lemma}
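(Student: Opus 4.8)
The plan is to reduce \eqref{2.9} to a dyadic frequency splitting, which is the natural analogue of the Plancherel/Hölder proof of Lemma \ref{le2.6} once the $\ell^2$ summation defining $\dot{H}^{-s}$ is replaced by the $\ell^\infty$ summation defining $\dot{B}_{2,\infty}^{-s}$. We may assume $\|f\|_{\dot{B}_{2,\infty}^{-s}}>0$ (otherwise $f=0$ and there is nothing to prove) and $\|\nabla^{l+1}f\|_{L^2}<\infty$ (otherwise the bound is trivial). By the Plancherel theorem and the finite overlap of the supports of the $\psi_j$, together with the Fourier support property of $\dot{\Delta}_j f$ (Bernstein's inequality), we have
\begin{equation*}
\|\nabla^l f\|_{L^2}^2 \lesssim \sum_{j\in\mathbb{Z}}\|\nabla^l\dot{\Delta}_j f\|_{L^2}^2,\qquad
\|\nabla^l\dot{\Delta}_j f\|_{L^2}\lesssim 2^{jl}\|\dot{\Delta}_j f\|_{L^2},\qquad
\|\nabla^l\dot{\Delta}_j f\|_{L^2}\lesssim 2^{-j}\|\nabla^{l+1}\dot{\Delta}_j f\|_{L^2}.
\end{equation*}

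Next I would fix an integer $N$ and split the sum at the frequency $2^N$. For the low frequencies $j\le N$, since $2^{-js}\|\dot{\Delta}_j f\|_{L^2}\le\|f\|_{\dot{B}_{2,\infty}^{-s}}$ and $l+s>0$ (this is where the hypothesis $s>0$ is used: for $l\ge1$ the exponent $l+s$ is automatically positive, but for $l=0$ positivity of $s$ is essential for convergence), the geometric series yields
\begin{equation*}
\sum_{j\le N}\|\nabla^l\dot{\Delta}_j f\|_{L^2}^2\lesssim\|f\|_{\dot{B}_{2,\infty}^{-s}}^2\sum_{j\le N}2^{2j(l+s)}\lesssim 2^{2N(l+s)}\|f\|_{\dot{B}_{2,\infty}^{-s}}^2 .
\end{equation*}
For the high frequencies $j>N$ we use $2^{-2j}\le 2^{-2N}$ and quasi-orthogonality to get
\begin{equation*}
\sum_{j>N}\|\nabla^l\dot{\Delta}_j f\|_{L^2}^2\lesssim 2^{-2N}\sum_{j>N}\|\nabla^{l+1}\dot{\Delta}_j f\|_{L^2}^2\lesssim 2^{-2N}\|\nabla^{l+1}f\|_{L^2}^2 ,
\end{equation*}
and summing the two contributions gives, for every $N\in\mathbb{Z}$,
\begin{equation*}
\|\nabla^l f\|_{L^2}^2\lesssim 2^{2N(l+s)}\|f\|_{\dot{B}_{2,\infty}^{-s}}^2+2^{-2N}\|\nabla^{l+1}f\|_{L^2}^2 .
\end{equation*}

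Finally I would optimize in $N$: the right-hand side as a function of the real variable $2^N$ is minimized when the two terms balance, i.e. when $2^{2N(l+1+s)}$ is comparable to $\|\nabla^{l+1}f\|_{L^2}^2/\|f\|_{\dot{B}_{2,\infty}^{-s}}^2$; choosing $N$ to be the nearest integer to this minimizer (the rounding costs only a bounded multiplicative factor) and substituting back makes each term comparable to $\|\nabla^{l+1}f\|_{L^2}^{2(1-\theta)}\|f\|_{\dot{B}_{2,\infty}^{-s}}^{2\theta}$ with $\theta=\frac{1}{l+1+s}$, since $(l+1)(1-\theta)-s\theta=l$. Taking square roots yields \eqref{2.9}, with an implicit constant depending only on the Littlewood--Paley partition of unity (exactly as in Lemma \ref{le2.6}). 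The argument is entirely routine; the only mildly delicate points are the integrality of $N$, dealt with by rounding, and the convergence of the low-frequency geometric series, which is precisely the place where $s>0$ is needed.
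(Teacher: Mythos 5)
Your proof is correct. The paper does not give an argument for this lemma; it simply cites Lemma 4.5 of Sohinger--Strain \cite{twz36}, and the dyadic split at a threshold $2^N$ followed by Bernstein and optimization over $N$ is precisely the standard proof given there, so your route matches the intended one. One small remark: because of the finite-overlap/quasi-orthogonality constants in the Littlewood--Paley decomposition and the rounding of $N$, the conclusion holds with an implicit constant, i.e.\ $\lesssim$ rather than the stated $\leq$; this is consistent with how the lemma is actually used in the paper (e.g.\ in (5.10), where a constant $c$ appears).
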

\begin{proof}
See Lemma 4.5 in \cite{twz36}.
\end{proof}
\section{Energy estimate and the local existence}\label{se3}
\subsection{Uniform a priori estimate}\label{se3.1}
Theorem \ref{th1.1} will be proved by combining the local existence of $(\nabla\Phi,\varrho,u,w,b)$ to (\ref{1.3})-(\ref{1.4}) and some uniform a priori
estimates as well as the communication argument. In this section, we aim to derive the uniform a priori estimates.
\begin{Lemma}\label{le3.1}
Let $T>0$. Suppose that
\begin{align}\label{3.1}
\sup_{0\leq t\leq T}\| (\nabla\Phi,\rho,u,w,b)\|_{H^3}\leq \delta,
\end{align}
for $0 \leq \delta \leq 1,$ let all assumptions of Theorem \ref{th1.1} are in force. Then for any integer $k\geq0$ and $t\geq0$, there holds
\begin{align}\label{3.2}
&\frac{1}{2}\frac{d}{dt}\sum_{l=k}^{k+2}\|\nabla^l(\nabla\Phi,\varrho,u,w,b)\|_{L^2}^{2}+\frac12\sum_{l=k}^{k+2}\|\nabla^lw\|_{L^2}^{2}
+\sum_{l=k}^{k+2}(\frac23\|\nabla^{l+1}u\|_{L^2}^{2}+3\|\nabla^{l+1}w\|_{L^2}^{2}+\|\nabla^{l+1}b\|_{L^2}^{2})\nonumber\\
&\leq c_l\delta\sum_{l=k}^{k+2}\left(\|\nabla^l(\varrho,w)\|_{L^2}^{2}+\|\nabla^{l+1}(\nabla\Phi,u,w,b)\|_{L^2}^{2}\right).
\end{align}
\end{Lemma}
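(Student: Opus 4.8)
The plan is to run the standard high-order energy estimate on \eqref{1.3}: for each $l\in\{k,k+1,k+2\}$ apply $\nabla^l$ to the five evolution equations, take the $L^2$ inner product with $\nabla^l\varrho$, $\nabla^l u$, $\nabla^l w$, $\nabla^l b$ respectively, integrate by parts to bring out the dissipation, and then sum the resulting identities over $l=k,k+1,k+2$. This yields, for each $l$, an identity of the shape
\[
\frac12\frac{d}{dt}\Bigl(\|\nabla^l\varrho\|_{L^2}^2+\|\nabla^l u\|_{L^2}^2+\|\nabla^l w\|_{L^2}^2+\|\nabla^l b\|_{L^2}^2\Bigr)+(\text{linear terms})=\sum_{i=1}^4\int\nabla^l M_i\cdot\nabla^l(\cdot)\,dx .
\]
The window of three consecutive levels is taken because each nonlinear term at level $l$ will be controlled by quantities at levels $l$ and $l+1$, so a block of three levels is exactly what lets the interpolation close while keeping the form of \eqref{3.2}.

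For the linear terms: after integration by parts the pressure term $\int\nabla^l\nabla\varrho\cdot\nabla^l u\,dx$ from the momentum equation cancels $\int\nabla^l\mathrm{div}\,u\,\nabla^l\varrho\,dx$ from the continuity equation (using $\gamma=1$); the viscous operators $-\Delta u-\nabla\mathrm{div}\,u$, $-\Delta w-2\nabla\mathrm{div}\,w$, $-\Delta b$ and the zeroth-order damping $2w$ produce the nonnegative quantities $\|\nabla^{l+1}u\|_{L^2}^2+\|\nabla^l\mathrm{div}\,u\|_{L^2}^2$, $\|\nabla^{l+1}w\|_{L^2}^2+2\|\nabla^l\mathrm{div}\,w\|_{L^2}^2$, $\|\nabla^{l+1}b\|_{L^2}^2$, $2\|\nabla^l w\|_{L^2}^2$. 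For the electric field term I use the Poisson equation $\Delta\Phi=\varrho$ together with the continuity equation: integrating by parts, $-\int\nabla^l\nabla\Phi\cdot\nabla^l u\,dx=\int\nabla^l\Phi\,\nabla^l\mathrm{div}\,u\,dx=-\int\nabla^l\Phi\,\partial_t\nabla^l\varrho\,dx+\int\nabla^l\Phi\,\nabla^l M_1\,dx$, and then, since $\nabla^l\varrho=\Delta\nabla^l\Phi$, one more integration by parts gives
\[
-\int\nabla^l\nabla\Phi\cdot\nabla^l u\,dx=\frac12\frac{d}{dt}\|\nabla^l\nabla\Phi\|_{L^2}^2+\int\nabla^l\nabla\Phi\cdot\nabla^l(\varrho u)\,dx ,
\]
which supplies the $\frac12\frac{d}{dt}\|\nabla^l\nabla\Phi\|_{L^2}^2$ appearing in \eqref{3.2} together with a new nonlinear remainder. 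Finally the two micropolar coupling terms $-\int\nabla^l(\nabla\times w)\cdot\nabla^l u\,dx$ and $-\int\nabla^l(\nabla\times u)\cdot\nabla^l w\,dx$ agree by self-adjointness of the curl and are estimated by Young's inequality, $\bigl|\int\nabla^l(\nabla\times u)\cdot\nabla^l w\,dx\bigr|\le\varepsilon\|\nabla^{l+1}u\|_{L^2}^2+C_\varepsilon\|\nabla^l w\|_{L^2}^2$; choosing $\varepsilon$ small and spending part of $2\|\nabla^l w\|_{L^2}^2$ and of $\|\nabla^l\mathrm{div}\,u\|_{L^2}^2$ leaves the dissipation $\frac23\|\nabla^{l+1}u\|_{L^2}^2+3\|\nabla^{l+1}w\|_{L^2}^2+\frac12\|\nabla^l w\|_{L^2}^2+\|\nabla^{l+1}b\|_{L^2}^2$ displayed on the left of \eqref{3.2}.

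It then remains to bound $\sum_i\int\nabla^l M_i\cdot\nabla^l(\cdot)\,dx$ and $\int\nabla^l\nabla\Phi\cdot\nabla^l(\varrho u)\,dx$ by $c_l\delta$ times the right-hand side of \eqref{3.2}. The transport-type pieces ($-u\cdot\nabla u$, $-u\cdot\nabla w$, $b\cdot\nabla u-u\cdot\nabla b-b\,\mathrm{div}\,u$, $-\mathrm{div}(\varrho u)$) are treated by writing $\nabla^l(g\,h)=g\,\nabla^l h+[\nabla^l,g]h$, then using the commutator estimate Lemma \ref{le2.2}, H\"{o}lder's inequality and the Gagliardo-Nirenberg interpolation Lemma \ref{le2.1}, arranged so that the top-order factor occurs linearly and one remaining factor is absorbed into $\|(\varrho,u,w,b)\|_{H^3}\le\delta$; this is where the hypothesis $N\ge3$ enters and why it is harmless that higher derivatives may be large. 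The quasilinear pieces $-f(\varrho)[\Delta u+\nabla\mathrm{div}\,u+\nabla\times w]$ and $-f(\varrho)[\Delta w+2\nabla\mathrm{div}\,w-2w+\nabla\times u]$ are the delicate ones: at the top level $l=k+2$ a direct Leibniz expansion of $\nabla^l(f(\varrho)\Delta u)$ produces $\nabla^{l+2}u$, which is not controlled by the dissipation, so I instead write $f(\varrho)\Delta u=\mathrm{div}(f(\varrho)\nabla u)-\nabla f(\varrho)\cdot\nabla u$ and integrate by parts once, giving $\bigl|\int\nabla^l(f(\varrho)\Delta u)\cdot\nabla^l u\,dx\bigr|\lesssim\|f(\varrho)\|_{L^\infty}\|\nabla^{l+1}u\|_{L^2}^2+\|[\nabla^l,f(\varrho)]\nabla u\|_{L^2}\|\nabla^{l+1}u\|_{L^2}+(\text{lower-order terms})$; then $\|f(\varrho)\|_{L^\infty}\lesssim\|\varrho\|_{H^3}\le\delta$ by Lemma \ref{le2.3}, and Lemmas \ref{le2.1}, \ref{le2.2}, \ref{le2.3} control the commutator and remaining terms by $c_l\delta$ times $\mathscr{D}$-type quantities. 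The terms involving $\nabla\mathrm{div}\,w$, $w$, $\nabla\times u$, the term $h(\varrho)\nabla\varrho$ (using $h(0)=0$), and $g(\varrho)[b\cdot\nabla b-\frac12\nabla|b|^2]$ (writing $g(\varrho)=1-f(\varrho)$ so that its genuinely nonlinear part vanishes at $\varrho=0$) are treated the same way. Summing over $l=k,k+1,k+2$ and collecting gives \eqref{3.2}.

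The main obstacle is the quasilinear group at the highest derivative level: one must integrate by parts in precisely the right way so that no spatial derivative of order exceeding what $\mathscr{D}$ dominates (or, at the three lowest levels, what the $H^3$-budget dominates) ever appears, while still extracting the small factor $\delta$ from the $H^3$-norm and keeping all dependence on the possibly large higher derivatives quadratic. A further, more mechanical, point is tuning the Young splits for the curl coupling and for the low-order $\nabla\Phi$ remainder so that the dissipation on the left of \eqref{3.2} survives with the stated constants.
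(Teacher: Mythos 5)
Your proposal follows essentially the same route as the paper's proof: the $\nabla^l$ energy estimates summed over $l=k,k+1,k+2$, the identity $-\int\nabla^l\nabla\Phi\cdot\nabla^l u\,dx=\tfrac12\tfrac{d}{dt}\|\nabla^l\nabla\Phi\|_{L^2}^2+\int\nabla^l\nabla\Phi\cdot\nabla^l(\varrho u)\,dx$ obtained via the Poisson and continuity equations, the Young split of the two curl couplings using $|\nabla\times u|^2\le 2|\nabla u|^2$, and a single integration by parts on the $f(\varrho)\Delta u$-type terms combined with Gagliardo--Nirenberg to avoid any derivative above order $l+1$ and to extract the small factor from $\|\cdot\|_{H^3}\le\delta$. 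The only minor cosmetic difference is that you rewrite $f(\varrho)\Delta u=\mathrm{div}(f(\varrho)\nabla u)-\nabla f(\varrho)\cdot\nabla u$ and invoke the commutator lemma, whereas the paper simply shifts one derivative across (i.e.\ pairs $\nabla^{l-1}(f(\varrho)\Delta u)$ with $\mathrm{div}\,\nabla^l u$) and Leibniz-expands directly; the two are computationally equivalent.
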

\begin{proof}
For any integer $k\geq0$, by the $\nabla ^{l}$ $(l=k,k+1,k+2)$ energy estimate, for   $\eqref{1.3}_1$-$\eqref{1.3}_2$ on $\varrho$ and $u$, we have
\begin{align}\label{3.3}
\frac{1}{2}&\frac{d}{dt}\int|\nabla ^{l}\varrho|^2+|\nabla ^{l}u|^2dx+2\int|\nabla ^{l+1}u|^2dx-\int\nabla^l\nabla\Phi\cdot\nabla^ludx\nonumber\\
=&\int\big[-\nabla ^{l}(\textrm{div}~(\varrho u))\cdot \nabla^{l}\varrho
+\nabla^{l}(\nabla \times w)\cdot \nabla^{l}u-\nabla^{l}(u\cdot\nabla u)\cdot\nabla^{l}u\nonumber\\
&-\nabla^{l}(f(\varrho)\Delta u)\cdot \nabla^{l}u-\nabla^{l}(f(\varrho)\nabla \textrm{div}~u)\cdot \nabla^{l}u
-\nabla^{l}(f(\varrho)\nabla \times w)\cdot\nabla^{l}u\nonumber\\
&-\nabla^{l}(h(\varrho)\nabla\varrho)\cdot\nabla^{l}u
+\nabla^{l}(g(\varrho)b\cdot\nabla b)\cdot \nabla^{l}u-\frac{1}{2}\nabla^{l}(g(\varrho)\nabla (|b|^2))\cdot\nabla^{l}u\nonumber\\
&-\nabla^{l}(\textrm{div}~u)\cdot\nabla^{l}\varrho- \nabla^{l}(\nabla\varrho)\cdot \nabla^{l}u\big]dx\nonumber\\
:=&\mathop{\sum}_{i=1}^{11}I_{i}.
\end{align}
For the last term on the left hand side of of (\ref{3.3}), by integration by parts, (\ref{1.3})$_1$ and the Poisson equation (\ref{1.3})$_5$, we deduce
that
\begin{align}\label{3.01}
-\int\nabla^l\nabla\Phi\cdot\nabla^ludx&=\int\nabla^l\Phi\cdot\nabla^l {\rm div}~udx\nonumber   \\
&=-\int\nabla^l\Phi\cdot\nabla^l\partial_t\varrho+\nabla^l\Phi\cdot\nabla^l{\rm div}~(\varrho u)dx\nonumber   \\
&=-\int\nabla^l\Phi\cdot\nabla^l\partial_t\Delta\Phi-\nabla^l\nabla\Phi\cdot\nabla^l(\varrho u)dx\nonumber   \\
&=\frac12\frac{d}{dt}\int|\nabla^l\nabla\Phi|^2dx+\int\nabla^l\nabla\Phi\cdot\nabla^l(\varrho u)dx.
\end{align}
In addition, by  integration by parts, the H\"{o}lder's inequality and Lemma \ref{le2.1}, it holds that
\begin{align}\label{3.02}
\int\nabla^l\nabla\Phi\cdot\nabla^l(\varrho u)dx&=\int\nabla^l(\Delta\Phi u)\cdot\nabla^l\nabla\Phi dx\nonumber   \\
 &=-\int\nabla^l(\nabla\Phi\cdot\nabla u)\cdot\nabla^l\nabla\Phi+\nabla^l(\nabla\Phi\cdot u)\cdot(\nabla\cdot\nabla^l\nabla\Phi) dx\nonumber \\
&=-\int\sum_{s=0}^{l}C_l^s\left((\nabla^s\nabla\Phi\cdot\nabla^{l-s}\nabla u)\cdot\nabla^l\nabla\Phi
+(\nabla^s\nabla\Phi\cdot\nabla^{l-s}u)\cdot(\nabla\cdot\nabla^l\nabla\Phi)\right)dx\nonumber \\
&\lesssim\sum_{s=0}^l\|\nabla^s\nabla\Phi\|_{L^3}(\|\nabla^{l+1-s}u\|_{L^2}\|\nabla^l\nabla\Phi\|_{L^6}
+\|\nabla^{l-s}u\|_{L^6}\|\nabla\nabla^l\nabla\Phi\|_{L^2})\nonumber \\
&\lesssim\sum_{s=0}^l\|\nabla^s\nabla\Phi\|_{L^3}\|\nabla^{l+1-s}u\|_{L^2}\|\nabla^{l+1}\nabla\Phi\|_{L^2}.
\end{align}
Making use of Lemma \ref{le2.1}, we can see that
\begin{align}\label{3.03}
\sum_{s=0}^l\|\nabla^s\nabla\Phi\|_{L^3}\|\nabla^{l+1-s}u\|_{L^2}\lesssim &
\sum_{s=0}^{[\frac l2]}\|\nabla^{\bar{\alpha}}\nabla\Phi\|_{L^2}^{\bar{\theta}}\|\nabla^{l+1}\nabla\Phi\|_{L^2}^{1-\bar{\theta}}
\|u\|_{L^2}^{1-\bar{\theta}}\|\nabla^{l+1}u\|_{L^2}^{\bar{\theta}}\nonumber  \\
&+\sum_{[\frac l2]+1}^l\|\nabla\Phi\|_{L^2}^{\tilde{\theta}}\|\nabla^{l+1}\nabla\Phi\|_{L^2}^{1-\tilde{\theta}}
\|\nabla^{\tilde{\alpha}}u\|_{L^2}^{1-\tilde{\theta}}\|\nabla^{l+1}u\|_{L^2}^{\tilde{\theta}},
\end{align}
where $\bar{\alpha},\tilde{\alpha},\bar{\theta},\tilde{\theta}$ satisfy
\begin{equation*}
\bar{\theta}=1-\frac{s}{l+1},\ \bar{\alpha}=\frac{k+1}{2(k+1-s)}\in [\frac12,1),\ \tilde{\theta}=1-\frac{2s+1}{2(l+1)},\
\tilde{\alpha}=\frac{l+1}{2s+1}\in (\frac12,1).
\end{equation*}
From (\ref{3.01})-(\ref{3.03}), it follows that
\begin{equation}\label{3.04}
-\int\nabla^l\nabla\Phi\cdot\nabla^ludx\geq\frac12\frac{d}{dt}\int|\nabla^l\nabla\Phi|^2dx
-c\|(\nabla\Phi,u)\|_{H^3}\|\nabla^{l+1}(\nabla\Phi,u)\|_{L^2}^2.
\end{equation}
Now, we concentrate our attention on  estimating the  terms $I_{1}-I_{11}$. First, for the term $I_1$, we can see that
\begin{align}\label{3.4}
I_1&=-\int \nabla ^{l}(\varrho\textrm{div}~u)\cdot \nabla^{l}\varrho dx
-\int \nabla ^{l}(u \cdot \nabla \varrho)\cdot \nabla^{l}\varrho dx\nonumber\\
&:=I_{1a}+I_{1b}.
\end{align}
If $l=0$, it is obvious that
\begin{equation}\label{3.05}
I_{1a}\leq \|\varrho\|_{H^3}\|(\varrho,\nabla u)\|_{L^2}^2.
\end{equation}
If $l=1$, we further obtain
\begin{align}\label{3.06}
I_{1a}&=-\int(\nabla\varrho{\rm div}~u)\cdot\nabla\varrho dx-\int(\varrho \nabla{\rm div}~u)\cdot\nabla\varrho\nonumber \\
&\leq \|\nabla\varrho\|_{L^3}\|\nabla\varrho\|_{L^2}\|{\rm div}~u\|_{L^6}
+\|\varrho\|_{L^{\infty}}\|\nabla{\rm div}~u\|_{L^2}\|\nabla\varrho\|_{L^2}\nonumber \\
&\leq c\|\varrho\|_{H^3}\|(\nabla\varrho,\nabla^2 u)\|_{L^2}^2.
\end{align}
If $l\geq2$, by the Leibniz formula and the H\"{o}lder's inequality, we obtain
\begin{align}\label{3.5}
I_{1a}&=-\int \sum_{s=0}^{l}C_{l}^{s}\nabla ^{s}\varrho \nabla^{l-s} \textrm{div}~u \cdot \nabla ^{l}\varrho dx\nonumber \\
&\lesssim\sum_{s=0}^{[\frac{l}{2}]}\| \nabla ^{s}\varrho  \nabla^{l-s+1} u\|_{L^2}\| \nabla ^{l} \varrho\|_{L^2}
+\sum_{s=[\frac{l}{2}]+1}^{l}\| \nabla ^{s}\varrho  \nabla^{l-s+1} u\|_{L^2}\| \nabla ^{l} \varrho\|_{L^2}\nonumber \\
&\lesssim\big( \sum_{s=0}^{[\frac{l}{2}]}\| \nabla ^{s}\varrho\|_{L^{\infty}}  \|\nabla^{l-s+1} u\|_{L^2}
+\sum_{s=[\frac{l}{2}]+1}^{l}\| \nabla ^{s}\varrho\|_{L^2} \|\nabla^{l-s+1} u\|_{L^{\infty}}\big)\| \nabla ^{l} \varrho\|_{L^2}\nonumber \\
&:=(I_{1aa}+I_{1ab})\|\nabla ^{l} \varrho\|_{L^2},
\end{align}
Making use of (\ref{2.1}), it holds that
\begin{align}\label{3.6}
\notag I_{1aa}&\lesssim\|\nabla^{\alpha}\varrho\|_{L^2}^{\theta}\|\nabla^{l}\varrho\|_{L^2}^{1-\theta}
\|\nabla u\|_{L^2}^{1-\theta} \|\nabla^{l+1}u\|_{L^2}^{\theta}\\
&\leq c_l \|(\varrho,u)\|_{H^{3}}\|(\nabla ^{l}\varrho,\nabla^{l+1}u)\|_{L^{2}},
\end{align}
where
\begin{align*}
    \theta=\frac{l-s}{l}\in (\frac{1}{2},1),\quad \alpha=\frac{3l}{2(l-s)}\in (\frac{3}{2},3].
\end{align*}
Likewise, the term $I_{1ab}$ can be estimated as
\begin{align}\label{3.7}
\notag I_{1ab}&\lesssim \|\nabla^{l}\varrho\|_{L^2}^{\theta_1}\|\nabla^{l}\varrho\|_{L^2}^{1-\theta_1}
\|\nabla^{\alpha_1} u\|_{L^2}^{1-\theta_1} \|\nabla^{l+1}u\|_{L^2}^{\theta_1}\nonumber\\
&\leq c_l \|(\varrho,u)\|_{H^{3}}\|(\nabla ^{l}\varrho,\nabla^{l+1}u)\|_{L^{2}},
\end{align}
where
\begin{align*}
\theta_1=\frac{l-s}{l-1},\quad \alpha_1=\frac{l-1}{2(s-1)}+2\in [\frac{5}{2},3].
\end{align*}
Inserting \eqref{3.6}-\eqref{3.7} into \eqref{3.5}, and together with (\ref{3.05})-(\ref{3.06}), we have
\begin{equation}\label{3.8}
I_{1a}\leq c_l\|(\varrho,u)\|_{H^3}(\|\nabla^{l}\varrho\|_{L^2}^{2}+\|\nabla^{l+1}u\|_{L^2}^{2}).
\end{equation}
Employing Lemma \ref{le2.2}, for the term $I_{1b}$, we infer that
\begin{align}\label{3.9}
I_{1b}&=-\int (u\cdot \nabla\nabla^{l}\varrho+[\nabla ^{l},u]\cdot \nabla \varrho)\cdot \nabla^{l}\varrho dx\nonumber\\
&\lesssim \|\nabla u\|_{L^{\infty}}\|\nabla^{l}\varrho\|_{L^2}^{2}
+(\|\nabla u\|_{L^{\infty}}\|\nabla^{l}\varrho\|_{L^2}^{2}+\|\nabla^{l} u\|_{L^{6}}\|\nabla\varrho\|_{L^3}\|\nabla^{l}\varrho\|_{L^2})\nonumber\\
&\leq c_l\|(\varrho,u)\|_{H^3}(\|\nabla^l \varrho\|_{L^2}^2+\| \nabla^{l+1}u\|_{L^2}^2).
\end{align}
Plugging \eqref{3.8}-\eqref{3.9} into \eqref{3.4}, yields that
\begin{equation}\label{3.10}
I_1\leq c_l\|(\varrho,u)\|_{H^3}(\|\nabla^{l}\varrho\|_{L^2}^2+\|\nabla^{l+1}u\|_{L^2}^2).
\end{equation}
Next, in virtue of the  H\"{o}lder and Sobolev's inequality, we are in a position to obtain
\begin{align}\label{3.11}
I_3&=-\sum_{s=0}^{l}C_{l}^{s}\int (\nabla^s u\cdot \nabla \nabla ^{l-s}u)\cdot \nabla ^{l}u dx\nonumber\\
&\lesssim \|\sum_{s=0}^{l}\nabla ^{s} u\cdot\nabla^{l+1-s}u\|_{L^{\frac{6}{5}}}\|\nabla^lu\|_{L^6}\nonumber\\
&\lesssim (\sum_{s=0}^{[\frac{l}{2}]}\| \nabla^{s}u\cdot \nabla^{l+1-s}u\|_{L^{\frac{6}{5}}}
+\sum_{s=[\frac{l}{2}]+1}^{l}\| \nabla^{s}u\cdot \nabla^{l+1-s}u\|_{L^{\frac{6}{5}}})\| \nabla^{l+1} u\|_{L^2}\nonumber\\
&:=(I_{3a}+I_{3b})\|\nabla^{l+1}u\|_{L^2}.
\end{align}
Making use of \eqref{2.1}, we deduce that
\begin{align}\label{3.12}
I_{3a}&\lesssim \| \nabla^s u\|_{L^3}\| \nabla^{l+1-s}u\|_{L^2}\nonumber\\
&\lesssim\|\nabla^{\alpha_2}u\|_{L^2}^{\theta_2}\| \nabla^{l+1}u\|_{L^2}^{1-\theta_2}
\|u\|_{L^2}^{1-\theta_2}\|\nabla^{l+1}u\|_{L^2}^{\theta_2}\nonumber\\
&\leq c_l\|u\|_{H_1}\|\nabla^{l+1}u\|_{L^2},
\end{align}
where
\begin{equation*}
\theta_2=\frac{l+1-s}{l+1},\quad \alpha_2=\frac{l+1}{2(l+1-s)}\in [\frac{1}{2},1).
\end{equation*}
Similarly, we also have
\begin{align}\label{3.13}
I_{3b}&\lesssim \| \nabla^s u \|_{L^2}\|\nabla^{l+1-s}u\|_{L^3}\nonumber\\
&\lesssim \| u\|_{L^2}^{\theta_3}\|\nabla^{l+1}u\|_{L^2}^{1-\theta_3}
\|\nabla^{\alpha_3}u\|_{L^2}^{1-\theta_3}\|\nabla^{l+1}u\|_{L^2}^{\theta_3}\nonumber\\
&\leq c_l \|u\|_{H^1}\|\nabla^{l+1}u\|_{L^2},
\end{align}
where
\begin{equation*}
\theta_3=\frac{l+1-s}{l+1},\quad \alpha_3=\frac{l+1}{2s}\in (\frac{1}{2},1].
\end{equation*}
As a consequence, from  \eqref{3.11}-\eqref{3.13}, it follows that
\begin{equation}\label{3.14}
I_3\leq c\| u \|_{H^1}\| \nabla^{l+1} u\|^2_{L^2}.
\end{equation}

Now, we estimate the term $I_4$. If $l=0$, appealing to the H\"{o}lder's inequality, \eqref{2.1}, \eqref{2.4} and Cauchy's inequality, we obtain
\begin{align}\label{3.15}
I_4&=\|f(\varrho)\|_{L^2}\|\Delta u\|_{L^3}\|u\|_{L^6}\nonumber\\
&\lesssim \| u\|_{H^3}(\| \varrho \|_{L^2}^2+\| \nabla u \|_{L^2}^2).
\end{align}
If $l=1$, by integration by parts, Lemma \ref{le2.3} and Lemma \ref{le2.1}, we can see that
\begin{align}\label{3.16}
I_4&=\int f(\varrho)|\Delta u|^2 dx \nonumber\\
&\lesssim \|f(\varrho)\|_{L^6}\|\Delta u\|_{L^3}\|\Delta u\|_{L^2}\nonumber\\
&\lesssim \|u\|_{H^3}(\|\nabla\varrho\|_{L^2}^2+\|\nabla^2 u\|_{L^2}^2).
\end{align}
Similarly, if $l=2$, using Lemma \ref{le2.3} and Lemma \ref{le2.1}, we obtain
\begin{align}\label{3.17}
I_4&=\int (\nabla f(\varrho)\Delta u+f(\varrho)\nabla \Delta u)\cdot \textrm{div}\nabla^2 u dx\nonumber\\
&\lesssim \| \nabla f(\varrho)\|_{L^6}\| \Delta u \|_{L^3}\| \nabla^3 u\|_{L^2}+\| f(\varrho)\|_{L^{\infty}}\|\nabla^3 u\|_{L^2}^{2}\nonumber\\
&\lesssim \| (\varrho,u)\|_{H^3}(\| \nabla^2 \varrho\|_{L^2}^2+\| \nabla^3 u\|_{L^2}^2).
\end{align}
If $l\geq3$, by integration by parts and the H\"{o}lder's inequality, we deduce that
\begin{align}\label{3.18}
I_4&=\int\nabla^{l-1}(f(\varrho)\Delta u)\cdot\textrm{div}\nabla^l u dx\nonumber\\
&=\sum_{s=0}^{l-1}C_{l-1}^{s}
\int(\nabla^{s}f(\varrho)\nabla^{l-1-s}\Delta u)\cdot \textrm{div}\nabla ^ludx\nonumber\\
&\lesssim \sum_{s=0}^{l-1}\|\nabla^sf(\varrho)\nabla^{l-1-s}\Delta u\|_{L^2}\|\nabla^{l+1}u\|_{L^2}.
\end{align}
If $0 \leq s \leq [\frac{l-1}{2}]$, by (\ref{2.03}) and Lemma \ref{le2.1}, we arrive at
\begin{align}\label{3.19}
\|\nabla^s f(\varrho)\nabla ^{l-1-s}\Delta u\|_{L^2}&\leq\|\nabla^{s}f(\varrho)\|_{L^{\infty}} \|\nabla^{l+1-s} u\|_{L^2}\nonumber\\
&\lesssim \| \nabla^s \varrho \|_{L^{\infty}}\|\nabla^{l+1-s} u\|_{L^2}\nonumber\\
&\lesssim \|\nabla^{\alpha_4}\varrho\|_{L^2}^{\theta_4}\|\nabla^{l}\varrho\|_{L^2}^{1-\theta_4}
\|\Delta u\|_{L^2}^{1-\theta_4}\|\nabla^{l+1} u\|_{L^2}^{\theta_4}\nonumber\\
&\leq c_l \|(\varrho,u)\|_{H^3}(\| \nabla^{l} \varrho \|_{L^2}+\|\nabla^{l+1}u\|_{L^2}),
\end{align}
where
\begin{align*}
\theta_4=\frac{l-1-s}{l-1}\in (\frac{1}{2},1],\quad \alpha_4=\frac{3}{2}+\frac{s}{2(l-1-s)}\in (\frac{3}{2},2].
\end{align*}
By the same way, if $[\frac{l-1}{2}]+1\leq s \leq l-1$, in virtue of \eqref{2.1} and \eqref{2.4}, we can see that
\begin{align}\label{3.20}
\|\nabla^{s} f(\varrho)\nabla^{l-1+s}\Delta u \|_{L^2}&\leq\|\nabla^{s} f(\varrho)\|_{L^2}\|\nabla^{l+1-s}u\|_{L^{\infty}}\nonumber\\
&\lesssim\|\nabla^{s}\varrho\|_{L^2}\|\nabla^{l+1-s}u\|_{L^{\infty}}\nonumber\\
&\lesssim\|\nabla^{2}\varrho\|_{L^2}^{\theta_5}\|\nabla^{l}\varrho\|_{L^2}^{1-\theta_5}
\|\nabla^{\alpha_5}u\|_{L^2}^{1-\theta_5}\|\nabla^{l+1} u \|_{L^2}^{\theta_5}\nonumber\\
&\leq c_l \|(\varrho,u)\|_{H^3}(\| \nabla^l \varrho\|_{L^2}+\|\nabla^{l+1}u\|_{L^2}),
\end{align}
where
\begin{equation*}
\theta_5=\frac{l-s}{l-2}\in (0,1),\quad \alpha_5=3-\frac{l-2}{2(s-2)}\in (2,3).
\end{equation*}
Inserting \eqref{3.19}-\eqref{3.20} into \eqref{3.18}, by the Cauchy's inequality, we obtain that for $l\geq3$,
\begin{equation}\label{3.21}
I_4\leq c_l\|(\varrho,u)\|_{H^3}(\|\nabla^{l}\varrho\|_{L^2}^2+\|\nabla^{l+1}u\|_{L^2}^2).
\end{equation}
Together \eqref{3.15}-\eqref{3.17} with \eqref{3.21}, we finally obtain for $l\geq 0$
\begin{equation}\label{3.22}
I_4\leq c_l\|(\varrho,u)\|_{H^3}(\|\nabla^l\varrho\|_{L^2}^2+\|\nabla^{l+1}u\|_{L^2}^2).
\end{equation}
Similarly, for the term $I_5$, we deduce that for $l\geq0$
\begin{equation}\label{3.23}
I_5\leq c_l\|(\varrho,u)\|_{H^3}(\|\nabla^{l}\varrho\|_{L^2}^2+\|\nabla^{l+1}u\|_{L^2}^2).
\end{equation}
Now, we estimate the term $I_6$, similar to \eqref{3.15}-\eqref{3.16}, if $l=0$, making use of \eqref{2.4} and Lemma \ref{le2.1}, we can see that
\begin{align}\label{3.24}
I_6&=-\int(f(\varrho)\nabla \times w)\cdot u dx\nonumber\\
&\lesssim \|\varrho\|_{L^2}\|\nabla\times w\|_{L^3}\| u \|_{L^6}\nonumber\\
&\lesssim \|\varrho\|_{L^2}(\|\nabla w\|_{L^2}+\| w \|_{H^3})\| \nabla u \|_{L^2}\nonumber\\
&\lesssim \|(\varrho,w)\|_{H^3}(\| \varrho \|_{L^2}^2+\|(\nabla w,\nabla u)\|_{L^2}^2).
\end{align}
If $l=1$, by integration by parts, the  H\"{o}lder's inequality, Lemma \ref{le2.1} and Lemma \ref{le2.3}, we infer that
\begin{align}\label{3.25}
I_6&=\int(f(\varrho)\nabla \times w)\cdot \textrm{div}\nabla u dx\nonumber\\
&\lesssim \| f(\varrho)\|_{L^3}\| \nabla w\|_{L^6}\|\nabla^2 u\|_{L^2}\nonumber\\
&\lesssim \|\varrho\|_{H^3}\|(\nabla^2 w,\nabla^2 u)\|_{L^2}^2.
\end{align}
Next, for $l\geq 2$, similar to \eqref{3.18}-\eqref{3.20}, there holds
\begin{align}\label{3.26}
I_6&=\sum_{s=0}^{l-1}\int (\nabla^s f(\varrho)\nabla^{l-1-s}\nabla\times w)\cdot \textrm{div}\nabla^l u dx\nonumber\\
&\leq \| \sum_{s=0}^{l-1}(\nabla^{s}f(\varrho)\nabla^{l-1-s}\nabla \times w)\|_{L^2}\|\nabla^{l+1}u\|_{L^2}.
\end{align}
If $0\leq s\leq\frac{[l-1]}{2}$, making use of the H\"{o}lder's inequality, \eqref{2.1}, we obtain
\begin{align}\label{3.27}
\|\nabla^s f(\varrho)\nabla^{l-1-s}\nabla\times w\|_{L^2}&\leq \| \nabla^s f(\varrho)\|_{L^3}\|\nabla^{l-1-s}\nabla\times w\|_{L^6}\nonumber\\
&\lesssim\|\nabla^{\alpha_6}\varrho \|_{L^2}^{\theta_6}\|\nabla^l \varrho\|_{L^2}^{1-\theta_6}
\|\nabla w\|_{L^2}^{1-\theta_6}\|\nabla ^{l+1}w\|_{L^2}^{\theta_6}\nonumber\\
&\lesssim \|(\varrho,w)\|_{H^3}(\|\nabla^l \varrho\|_{L^2}+\|\nabla^{l+1}w\|_{L^2}),
\end{align}
where
\begin{equation*}
\theta_6=\frac{l-s}{l},\quad \alpha_6=\frac{l}{2(l-s)}\in [\frac{1}{2},1).
\end{equation*}
If $[\frac{l-1}{2}]+1\leq s\leq l-1$, applying Lemma \ref{le2.1} and Lemma \ref{le2.3}, there holds
\begin{align}\label{3.28}
\|\nabla^s f(\varrho)\nabla^{l-1-s}\nabla \times w\|_{L^2}&\leq \|\nabla^s f(\varrho)\|_{L^6}\|\nabla^{l-1-s}\nabla \times w\|_{L^3}\nonumber\\
&\lesssim \|\nabla^{s+1}\varrho\|_{L^2}\|\nabla^{l-s}w\|_{L^3}\nonumber\\
&\lesssim \|\nabla \varrho\|_{L^2}^{\theta_7}\|\nabla^l \varrho\|_{L^2}^{1-\theta_7}
\|\nabla^{\alpha_7}w\|_{L^2}^{1-\theta_7}\|\nabla^{l+1}w\|_{L^2}^{\theta_7}\nonumber\\
&\leq c_l\|(\varrho,w)\|_{H^3}(\|\nabla^{l} \varrho \|_{L^2}^2+\|\nabla^{l+1} w \|_{L^2}^2),
\end{align}
where
\begin{equation*}
\theta_7=\frac{l-1-s}{l-1},\quad \alpha_7=2-\frac{l-1}{2s}\in (1,\frac{3}{2}].
\end{equation*}
Plugging \eqref{3.27}-\eqref{3.28} into \eqref{3.26}, we deduce that for $l\geq 2$,
\begin{equation}\label{3.29}
I_6 \leq c_l \|(\varrho,w)\|_{H^3}(\|\nabla^l \varrho\|_{L^2}^2+\|(\nabla^{l+1}w,\nabla^{l+1}u)\|_{L^2}^2).
\end{equation}
Thus, combining \eqref{3.24}-\eqref{3.25} and \eqref{3.29}, we conclude that for $l\geq 0$
\begin{equation}\label{3.30}
I_6\leq c_l\|(\varrho,w)\|_{H^3}(\| \nabla^l \varrho\|_{L^2}^2+\|(\nabla^{l+1}w,\nabla^{l+1}u)\|_{L^2}^2).
\end{equation}
Noting that $h(\varrho)$ is a smooth function, similar to \eqref{3.30}, the term $I_7$ can be estimated as
\begin{equation}\label{3.31}
I_7\leq c_l\|\varrho\|_{H^3}(\|\nabla^l \varrho\|_{L^2}^2+\|\nabla^{l+1}u\|_{L^2}^2).
\end{equation}
In addition, observe that
\begin{equation*}
g(\varrho)=1-f(\varrho), \quad \|\nabla^l(b\otimes b)\|_{L^2}\lesssim \| b \|_{H^3}\|\nabla^{l+1} b\|_{L^2}.
\end{equation*}
Thus, by integration by parts and the H\"{o}lder's inequality, similar to the estimate of $I_6$, the term $I_8$ can be estimated as
\begin{align}\label{3.32}
I_8&=-\int (g(\varrho)\nabla^{l-1}\nabla\cdot(b\otimes b))\cdot (\nabla\cdot\nabla^{l}u) dx
+\sum_{s=1}^{l-1}C_l^s\int (\nabla^s f(\varrho)\nabla^{l-1-s}\nabla\cdot(b\otimes b))\cdot(\nabla\cdot\nabla^{l}u) dx\nonumber\\
&\lesssim \|\nabla^l (b\otimes b)\|_{L^2}\|\nabla^{l+1}u\|_{L^2}
+\sum_{s=1}^{l-1}\|\nabla^s f(\varrho)\cdot \nabla^{l-1-s}\nabla \cdot(b\otimes b)\|_{L^2}\|\nabla^{l+1}u\|_{L^2}\nonumber\\
&\leq c_l\|(\varrho,b)\|_{H^3}(\|\nabla^l \varrho\|_{L^2}^2+\|(\nabla^{l+1}b,\nabla^{l+1}u)\|_{L^2}^2).
\end{align}
Likewise, for the term $I_9$, we can also obtain
\begin{equation}\label{3.33}
I_9\leq c_l \|(\varrho,b)\|_{H^3}(\|\nabla^l \varrho\|_{L^2}^2+\|(\nabla^{l+1}b,\nabla^{l+1}u)\|_{L^2}^2).
\end{equation}
Taking into account (\ref{3.04}), \eqref{3.10}, \eqref{3.14}, \eqref{3.22}, \eqref{3.23}, \eqref{3.30}-\eqref{3.33} and note that $I_{10}+I_{11}=0$,
we finally obtain
\begin{align}\label{3.34}
&\frac{1}{2}\frac{d}{dt}\int(|\nabla^l\nabla\Phi|^2+|\nabla^l \varrho|^2+|\nabla^l u|^2)dx+2\int |\nabla^{l+1}u|^2 dx\nonumber\\
&\leq I_2+c_l\|(\nabla\Phi,\varrho,u,w,b)\|_{H^3}(\|\nabla^l \varrho\|_{L^2}^2+\|\nabla^{l+1}(\nabla\Phi,u,w,b)\|_{L^2}^2).
\end{align}

Next, for any integer $k\geq 0$, by the $\nabla^l (l=k, k+1, k+2)$ emergy estimate, for \eqref{1.3}$_3$-\eqref{1.3}$_4$ on $w$ and $b$, there holds
\begin{align}\label{3.35}
\frac{1}{2}&\frac{d}{dt}\int (|\nabla^l w|^2+|\nabla^l b|^2)dx+2\|\nabla^l w\|_{L^2}^2+3\|\nabla^{l+1}w\|_{L^2}^2+\|\nabla^{l+1}b\|_{L^2}^2\nonumber\\
=&\int\nabla^{l}(\nabla\times u)\cdot \nabla^l w dx-\int\nabla^l(u\cdot\nabla w)\cdot\nabla^l wdx
-\int\nabla^{l}(f(\varrho)\Delta w)\cdot\nabla^lwdx\nonumber\\
&-2\int\nabla^l(f(\varrho)\nabla\textrm{div}~w)\cdot\nabla^lwdx+2\int \nabla^l(f(\varrho)w)\cdot\nabla^lwdx\nonumber\\
&-\int\nabla^l(f(\varrho)\nabla\times u)\cdot\nabla^lwdx
+\int\nabla^l(b\cdot \nabla u)\cdot \nabla^l b dx-\int\nabla^l(u\cdot \nabla b)\cdot \nabla^lbdx\nonumber\\
&-\int\nabla^l(b\nabla\cdot u)\cdot \nabla^l b dx\nonumber\\
:=&\sum_{i=12}^{20}I_i.
\end{align}
Now, we proceed to estimate $I_{12}-I_{20}$. First, taking into account \eqref{3.11}-\eqref{3.14}, the terms $I_{13}, I_{18}, I_{19}, I_{20}$ can be estimated as
\begin{equation}\label{3.36}
I_{13}+I_{18}+I_{19}+I_{20}\leq c_l \|(u,w,b)\|_{H^3}\|\nabla^{l+1}(u,w,b)\|_{L^2}^2.
\end{equation}
Next, similar to the estimate of $I_4$ and $I_6$, we obtain
\begin{equation}\label{3.37}
I_{14}+I_{15} \leq c_l\|(\varrho,w)\|_{H^3}(\|\nabla^l \varrho\|_{L^2}^2+\|\nabla^{l+1} w \|_{L^2}^2),
\end{equation}
and
\begin{equation}\label{3.38}
I_{17}\leq c_l \|(\varrho,u)\|_{H^3}(\|\nabla^l \varrho\|_{L^2}^2+\|\nabla^{l+1}(u,w)\|_{L^2}^2).
\end{equation}
Finally, it is obvious that the term $I_{16}$ can be estimated as
\begin{equation}\label{3.39}
I_{16}\leq c_l \|(\varrho,w)\|_{H^3}\|\nabla^{l}(\varrho,w)\|_{L^2}^2.
\end{equation}
Inserting \eqref{3.36}-\eqref{3.39} into \eqref{3.35}, we obtain
\begin{align}\label{3.40}
\frac{1}{2}&\frac{d}{dt}\int(|\nabla^l w|^2+|\nabla^l b|^2)dx+2\|\nabla^lw\|_{L^2}^2
+3\|\nabla^{l+1}w\|_{L^2}^2+\|\nabla^{l+1}b\|_{L^2}^2\nonumber\\
\leq& I_{12}+c_l\|(\varrho,u,w,b)\|_{H^3}(\|\nabla^l \varrho\|_{L^2}^2+\|\nabla^{l+1}(u,w,b)\|_{L^2}^2)
+c_l\|(\varrho,w)\|_{H^3}\|\nabla^l(\varrho,w)\|_{L^2}^2.
\end{align}
Observe that, by the  Young's inequality
\begin{equation}\label{3.040}
I_{2}+I_{12}=2I_{12}\leq \frac43\|\nabla^{l+1}u\|_{L^2}^2+\frac32\|\nabla^lw\|_{L^2}^2,
\end{equation}
where in the last inequality, we have used fact $|\nabla\times u|^2\leq 2|\nabla u|^2$.

Combining \eqref{3.34} and \eqref{3.40}-(\ref{3.040}), yields that
\begin{align*}
&\frac{1}{2}\frac{d}{dt}\sum_{l=k}^{k+2}\|\nabla^l(\nabla\Phi,\varrho,u,w,b)\|_{L^2}^{2}+\frac12\sum_{l=k}^{k+2}\|\nabla^lw\|_{L^2}^{2}
+\sum_{l=k}^{k+2}(\frac23\|\nabla^{l+1}u\|_{L^2}^{2}+3\|\nabla^{l+1}w\|_{L^2}^{2}+\|\nabla^{l+1}b\|_{L^2}^{2})\nonumber\\
&\leq c_l\|(\nabla\Phi,\rho,u,w,b)\|_{H^3}\sum_{l=k}^{k+2}\left(\|\nabla^l\varrho\|_{L^2}^{2}+\|\nabla^{l+1}(\nabla\Phi,u,w,b)\|_{L^2}^{2}\right)
+c_l\|(\rho,w)\|_{H^3}\sum_{l=k}^{k+2}\|\nabla^lw\|_{L^2}^{2}.
\end{align*}
This, together with (\ref{3.1}),  whence \eqref{3.2}. Thus, we have completed the proof of Lemma \ref{le3.1}.
\end{proof}
Note that in Lemma \ref{le3.1}, we only derive the dissipation estimate of $u, w, b$. Now, we proceed to derive the dissipation estimate of $\nabla\Phi$
and $\varrho$ by constructing some interactive energy functions in the following lemma.
\begin{Lemma}\label{le3.2}
Let  all assumptions in Lemma \ref{le3.1} are in force, then for any $k \geq 0$, there holds
\begin{align}\label{3.41}
&\frac{d}{dt}\sum_{l=k}^{k+1}\int \nabla^l u\cdot \nabla^{l+1}\varrho dx
+\frac{1}{4}\sum_{l=k}^{k+1}(\|\nabla^{l}\varrho\|_{L^2}^2+\|\nabla^{l+1}\varrho\|_{L^2}^2+\|\nabla^{l+1}\nabla\Phi\|_{L^2}^2+
\|\nabla^{l+2}\nabla\Phi\|_{L^2}^2)\nonumber\\
\leq&c_l\delta\sum_{l=k}^{k+1}(\|\nabla^{l+1}\varrho\|_{L^2}^2+\|\nabla^{l+2}(w,b)\|_{L^2}^2)
+\sum_{l=k}^{k+1}(\|\nabla^{l+1}u\|_{L^2}^2+2\|\nabla^{l+1}w\|_{L^2}^2+4\|\nabla^{l+2}u\|_{L^2}^2).
\end{align}
\end{Lemma}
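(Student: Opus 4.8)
The goal is the dissipation estimate for $\varrho$ and $\nabla\Phi$. The natural device is the cross (interactive) term $\sum_{l=k}^{k+1}\int \nabla^l u\cdot\nabla^{l+1}\varrho\,dx$, whose time derivative produces the ``wrong-signed'' quantity $\|\nabla^{l+1}\varrho\|_{L^2}^2$ with a favourable sign (coming from the $-\nabla\varrho$ term in $\eqref{1.3}_2$ paired against $\nabla^{l+1}\varrho$) plus, via the Poisson relation $\Delta\Phi=\varrho$, a term controlling $\|\nabla^{l+1}\nabla\Phi\|_{L^2}^2$. Concretely, I would first apply $\nabla^{l+1}$ to $\eqref{1.3}_1$ and $\nabla^{l}$ to $\eqref{1.3}_2$, then compute
\begin{align*}
\frac{d}{dt}\int \nabla^l u\cdot\nabla^{l+1}\varrho\,dx
&=\int \nabla^l\partial_t u\cdot\nabla^{l+1}\varrho\,dx+\int\nabla^l u\cdot\nabla^{l+1}\partial_t\varrho\,dx.
\end{align*}
Substituting the equations, the principal linear contribution is $-\gamma\int|\nabla^{l+1}\varrho|^2\,dx$ from $\gamma\nabla\varrho$, and $-\int\nabla^{l+1}\varrho\cdot\nabla^{l+1}\mathrm{div}\,u\,dx$ from the $-\nabla^{l+1}\mathrm{div}\,u$ piece of $\partial_t\varrho$ (plus $M_1$), which is absorbed into $\|\nabla^{l+1}\varrho\|_{L^2}^2/(2\gamma)$ and $\|\nabla^{l+2}u\|_{L^2}^2$ by Young's inequality. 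The $-\Delta u-\nabla\mathrm{div}\,u$ terms, after integration by parts, give $\int\nabla^{l+1}\mathrm{div}\,u\cdot(\dots)$ type expressions controlled by $\|\nabla^{l+1}u\|_{L^2}^2+\|\nabla^{l+2}u\|_{L^2}^2$; the $\nabla\times w$ term gives $\|\nabla^{l+1}w\|_{L^2}^2$-type contributions; and crucially, the $\nabla\Phi$ term in $\eqref{1.3}_2$ paired with $\nabla^{l+1}\varrho=\nabla^{l+1}\Delta\Phi$ gives, after integration by parts, $\int|\nabla^{l+1}\nabla\Phi|^2$ or $\int|\nabla^{l+2}\nabla\Phi|^2$ with a good sign. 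This is the mechanism that produces all four dissipative quantities $\|\nabla^l\varrho\|_{L^2}^2$, $\|\nabla^{l+1}\varrho\|_{L^2}^2$, $\|\nabla^{l+1}\nabla\Phi\|_{L^2}^2$, $\|\nabla^{l+2}\nabla\Phi\|_{L^2}^2$ on the left-hand side (the lower-order ones $\|\nabla^l\varrho\|_{L^2}^2$, $\|\nabla^{l+1}\nabla\Phi\|_{L^2}^2$ arising when we run the index at $l$ and $l+1$ and combine).

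Next I would handle the nonlinear terms $M_i$. These are estimated exactly in the spirit of Lemma \ref{le3.1}: one pairs $\nabla^l M_2$ against $\nabla^{l+1}\varrho$ and $\nabla^{l+1}M_1$ against $\nabla^l u$, then uses the Leibniz/commutator expansion, Hölder, the Gagliardo--Nirenberg inequality (Lemma \ref{le2.1}), and the composition estimates (Lemma \ref{le2.3}) for $f(\varrho),h(\varrho),g(\varrho)$. Every such term carries at least one factor of $\|(\varrho,u,w,b)\|_{H^3}\le\delta$, so under the a priori bound \eqref{3.1} they are all bounded by $c_l\delta$ times the dissipation-type quantities $\sum_{l=k}^{k+1}(\|\nabla^{l+1}\varrho\|_{L^2}^2+\|\nabla^{l+2}(w,b)\|_{L^2}^2)$ plus controllable lower-order pieces, matching the right-hand side of \eqref{3.41}. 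Since these estimates are essentially identical in structure to $I_1$--$I_{11}$ already carried out, I would only indicate the typical case (e.g. the term $-u\cdot\nabla u$ or $f(\varrho)\Delta u$) and cite the previous computations for the rest.

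The main obstacle is bookkeeping rather than a genuine analytic difficulty: one must verify that after all integrations by parts the coefficients of the genuinely dissipative terms $\|\nabla^{l+1}\varrho\|_{L^2}^2$ and $\|\nabla^{\bullet}\nabla\Phi\|_{L^2}^2$ remain strictly positive (hence the explicit constant $\tfrac14$ after absorbing the nonresonant terms into the $\|\nabla^{l+1}u\|_{L^2}^2$, $\|\nabla^{l+1}w\|_{L^2}^2$, $\|\nabla^{l+2}u\|_{L^2}^2$ reservoir on the right), and that the highest derivative landing on $\varrho$ from the $M_1$-pairing (which is $\nabla^{l+2}\varrho$, one order above what the right-hand side allows) is in fact harmless because it only ever appears multiplied by $u$ or $\varrho$ and can be integrated by parts to move a derivative onto the smooth lower-order factor, or is simply absent once one uses $\eqref{1.3}_1$ to trade $\nabla^{l+1}\partial_t\varrho$ for $-\nabla^{l+1}\mathrm{div}\,u+\nabla^{l+1}M_1$. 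Handling the $\nabla\Phi$--$\varrho$ coupling cleanly via the Poisson equation, as in \eqref{3.01}--\eqref{3.04}, is the one spot requiring care; everything else follows the template of Lemma \ref{le3.1}. Summing the resulting inequalities over $l=k,k+1$ and using Young's inequality to fix the constant $\tfrac14$ yields \eqref{3.41}.
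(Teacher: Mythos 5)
Your proposal is correct and follows essentially the same route as the paper: apply $\nabla^l$ to the momentum equation, pair against $\nabla^{l+1}\varrho$, use the product rule on the cross term and substitute $\eqref{1.3}_1$ for $\partial_t\nabla^{l+1}\varrho$ (trading it for $-\nabla^{l+1}\mathrm{div}\,u+\nabla^{l+1}M_1$ and integrating by parts so that no $\nabla^{l+2}\varrho$ ever appears), absorb the linear viscous and $\nabla\times w$ pieces by Young's inequality into $\|\nabla^{l+2}u\|^2_{L^2}$ and $\|\nabla^{l+1}w\|^2_{L^2}$, estimate the $M_2$-pairings exactly as $I_1$--$I_{11}$, and invoke the Poisson equation to produce the good-signed $\|\nabla^l\varrho\|^2_{L^2}$ from the $\nabla\Phi$ term and the identities $\|\nabla^{l+1}\nabla\Phi\|^2_{L^2}=\|\nabla^l\varrho\|^2_{L^2}$, $\|\nabla^{l+2}\nabla\Phi\|^2_{L^2}=\|\nabla^{l+1}\varrho\|^2_{L^2}$ to convert $\varrho$-dissipation into $\nabla\Phi$-dissipation. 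One minor slip: the parenthetical claiming that the lower-order dissipative terms $\|\nabla^l\varrho\|^2_{L^2}$ and $\|\nabla^{l+1}\nabla\Phi\|^2_{L^2}$ arise only after running the index over $l=k,k+1$ is not quite right — for each fixed $l$ the $\nabla\Phi$-pairing already gives $\|\nabla^l\varrho\|^2_{L^2}$ and the $\gamma\nabla\varrho$-pairing gives $\|\nabla^{l+1}\varrho\|^2_{L^2}$, and the Poisson identities immediately supply both $\nabla\Phi$ quantities, so all four appear per $l$; the sum over $l=k,k+1$ merely produces the stated double sum. This does not affect the validity of the argument.
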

\begin{proof}
Applying $\nabla^l(l=k,k+1)$ to \eqref{1.3}$_2$ and then taking the $L^2-$ inner product with $\nabla^{l+1}\varrho$, we have
\begin{align*}
&\int \nabla^l \partial_tu\cdot\nabla^{l+1}\varrho dx+\|\nabla^{l+1}\varrho\|_{L^2}^2\\
=&\int\nabla^l\Delta u\cdot\nabla^{l+1}\varrho dx+\int\nabla^l\nabla\textrm{div}~u\cdot\nabla^{l+1}\varrho dx
+\int\nabla^l\nabla\times w\cdot\nabla^{l+1}\varrho dx\\
&-\int\nabla^l(u\cdot \nabla u)\cdot\nabla^{l+1}\varrho dx-\int\nabla^l(f(\varrho)\Delta u)\nabla^{l+1}\varrho dx
-\int \nabla^{l}(f(\varrho)\nabla\textrm{div}~u)\cdot\nabla^{l+1}\varrho dx\\
&-\int \nabla^l (f(\varrho)\nabla\times w)\cdot \nabla^{l+1}\varrho dx-\int\nabla^l(h(\varrho)\nabla\varrho)\cdot\nabla^{l+1}\varrho dx
+\int \nabla^{l}(g(\varrho)b\cdot \nabla b)\cdot\nabla^{l+1}\varrho dx\\
&-\frac{1}{2}\int \nabla^l (g(\varrho)\nabla|b|^2)\cdot\nabla^{l+1}\varrho dx+\int\nabla^l\nabla\Phi\cdot\nabla^l\nabla\varrho dx\\
:=&\sum_{j=1}^{11}J_i.
\end{align*}
On the other hand, it is clearly that
\begin{align}\label{3.42}
\sum_{i=1}^{11}J_{i}&=\frac{d}{dt}\int \nabla^l u\cdot\nabla^{l+1}\varrho dx-\int\nabla^l u\cdot\partial_{t}\nabla^{l+1}\varrho dx
+\|\nabla^{l+1}\varrho\|_{L^2}^2\nonumber\\
&=\frac{d}{dt}\int \nabla^l u\cdot\nabla^{l+1}\varrho dx+\int \nabla^{l+1}\textrm{div}~ u\cdot \nabla^{l}u dx
+\int\nabla^{l+1}\textrm{div}~(\varrho u)\cdot\nabla^{l}u dx+\|\nabla^{l+1}\varrho\|_{L^2}^2\nonumber\\
&:=\frac{d}{dt}\int\nabla^l u\cdot\nabla^{l+1}\varrho dx-\|\nabla^{l+1}u\|_{L^2}^2+\|\nabla^{l+1}\varrho\|_{L^2}^2+J_{12}.
\end{align}
Now, we concentrate our attention on estimating the terms $J_1-J_{12}$. First, employing the Cauchy's inequality, it holds that
\begin{align*}
J_1+J_2&\leq 4\|\nabla^{l+2} u\|_{L^2}^2+\frac{1}{4}\|\nabla^{l+1}\varrho\|_{L^2}^2,\\
J_3&\leq 2\|\nabla^{l+1}w\|_{L^2}^2+\frac{1}{4}\|\nabla^{l+1}\varrho\|_{L^2}^2,
\end{align*}
where in the last inequality, we have used the fact that $|\nabla\times w|^2\leq 2|\nabla w|^2$.

Moreover, taking into account \eqref{3.11}-\eqref{3.14},  we are in a position to obtain
\begin{align*}
J_4&=\int \textrm{div}\nabla^l(u\cdot \nabla u)\cdot \nabla^{l}\varrho dx\\
&\leq \|\textrm{div}\nabla^l(u\cdot \nabla u)\|_{L^{\frac{6}{5}}}\|\nabla^l \varrho\|_{L^{6}}\\
&\leq c_l\| u \|_{H^3}(\|\nabla^{l+1}\varrho\|_{L^2}^2+\|\nabla^{l+2}u\|_{L^2}^2).
\end{align*}
Similar to \eqref{3.18}, using the H\"{o}lder's inequality, Lemma \ref{le2.1} and Lemma \ref{le2.3}, the terms $J_5, J_6$ can be estimated as
\begin{align*}
J_5&\lesssim \|\sum_{s=0}^{l}C_l^s\nabla^s f(\varrho)\nabla^{l-s}\Delta u\|_{L^2}\|\nabla^{l+1}\varrho\|_{L^2}\\
&\leq c_l\|(\varrho,u)\|_{H^3}(\|\nabla^{l+1} \varrho\|_{L^2}^2+\|\nabla^{l+2}u\|_{L^2}^2),\\
J_6&\lesssim \|\sum_{s=0}^{l}C_l^s\nabla^s f(\varrho)\nabla^{l-s}\nabla\textrm{div}~u\|_{L^2}\|\nabla^{l+1}\varrho\|_{L^2}\\
&\leq c_l \|(\varrho,u)\|_{H^3}(\|\nabla^{l+1}\varrho\|_{L^2}^2+\|\nabla^{l+2}u\|_{L^2}^2).
\end{align*}
Furthermore, taking into account (\ref{3.30})-(\ref{3.31}), applying the H\"{o}lder's inequality,  Lemma \ref{le2.1} and Lemma \ref{le2.3}, we obtain
\begin{align*}
J_7&\lesssim \|\sum_{s=0}^{l}(\nabla^s f(\varrho)\nabla^{l-s}\nabla\times w)\|_{L^2}\|\nabla^{l+1}\varrho\|_{L^2}\\
&\leq c_l \|(\varrho,w)\|_{H^3}(\|\nabla^{l+1}\varrho\|_{L^{2}}^2+\|\nabla^{l+2}w\|_{L^2}^2),\\
J_8&\lesssim \|\sum_{s=0}^{l}(\nabla^s h(\varrho)\nabla^{l-s}\nabla \varrho)\|_{L^2}\|\nabla^{l+1}\varrho\|_{L^2}\\
&\leq c_l\|\varrho\|_{H^3}\|\nabla^{l+1}\varrho\|_{L^2}^2.
\end{align*}
Similar to the estimates of  \eqref{3.22}-\eqref{3.33}, we  further obtain
\begin{align*}
J_9+J_{10}&\lesssim (\|\sum_{s=0}^{l}\nabla^s g(\varrho)\nabla^{l-s}\nabla\cdot(b\otimes b)\|_{L^2}
+\|\sum_{s=0}^{l}\nabla^s g(\varrho)\nabla^{l-s}\nabla|b|^2\|_{L^2})\|\nabla^{l+1}\varrho\|_{L^2}^2\\
&\leq \|(\varrho,b)\|_{H^3}(\|\nabla^{l+1}\varrho\|_{L^2}^2+\|\nabla^{l+2}b\|_{L^2}^2).
\end{align*}
Next, by  integration by parts and (\ref{1.3})$_5$, we infer that
\begin{equation*}
J_{11}=-\int\nabla^l\Delta\Phi\cdot\nabla^l\varrho dx=-\int|\nabla^l\varrho|^2dx.
\end{equation*}
Furthermore, from (\ref{1.3})$_5$, there holds
\begin{equation}\label{3.042}
\|\nabla^{l+1}\nabla\Phi\|_{L^2}^2=\|\nabla^{l}\Delta\Phi\|_{L^2}^2=\|\nabla^{l}\varrho\|_{L^2}^2,\quad \|\nabla^{l+2}\nabla\Phi\|_{L^2}^2=\|\nabla^{l+1}\varrho\|_{L^2}^2.
\end{equation}
Finally, by integration by parts and Lemma \ref{le2.1}, we get
\begin{align*}
-J_{12}&=\int \nabla^{l+1}(\varrho u)\cdot \nabla^{l+1}u dx\\
&=\int(\nabla^{l+1}\varrho u)\cdot\nabla^{l+1}u dx+\int \varrho \nabla^{l+1}u\cdot\nabla^{l+1}u dx
+\int\sum_{s=1}^lC_{l+1}^s(\nabla^s \varrho \nabla^{l+1-s}u)\cdot\nabla^{l+1}u dx\\
&\lesssim(\| u \|_{\infty}+\| \varrho\|_{\infty})(\|\nabla^{l+1}\varrho\|_{L^2}^2+\|\nabla^{l+1}u\|_{L^2}^2)
+\sum_{s=1}^l\|\nabla^s \varrho\nabla^{l+1-s}u\|_{L^2}\|\nabla^{l+1}u\|_{L^2}\\
&\leq c_l\|(\varrho,u)\|_{H^3}(\|\nabla^{l+1}\varrho\|_{L^2}^2+\|\nabla^{l+1}u\|_{L^2}^2).
\end{align*}
Putting these estimations into \eqref{3.42}, and summing up with $l=k,k+1$, we finally obtain
\begin{align*}
&\frac{d}{dt}\sum_{l=k}^{k+1}\int \nabla^l u\cdot \nabla^{l+1}\varrho dx
+\frac{1}{4}\sum_{l=k}^{k+1}(\|\nabla^{l}\varrho\|_{L^2}^2+\|\nabla^{l+1}\varrho\|_{L^2}^2+\|\nabla^{l+1}\nabla\Phi\|_{L^2}^2+
\|\nabla^{l+2}\nabla\Phi\|_{L^2}^2)\nonumber\\
\leq&c_l\|(\varrho,u,w,b)\|_{H^3}\sum_{l=k}^{k+1}\|\nabla^{l+1}\varrho\|_{L^2}^2
+(1+c_l\|(\varrho,u)\|_{H^3})\sum_{l=k}^{k+1}\|\nabla^{l+1}u\|_{L^2}^2\nonumber\\
&+(4+c_l\|(\varrho,u)\|_{H^3})\sum_{l=k}^{k+1}\|\nabla^{l+2}u\|_{L^2}^2+2\sum_{l=k}^{k+1}\|\nabla^{l+1}w\|_{L^2}^2
+c_l\|(\varrho,w,b)\|_{H^3}\sum_{l=k}^{k+1}\|\nabla^{l+2}(w,b)\|_{L^2}^2.
\end{align*}
This, together with (\ref{3.1}) implies \eqref{3.41}. Thus, we have completed the proof of Lemma \ref{le3.2}.
\end{proof}
\subsection{Local existence of solution}
In this subsection, we devote to proving the local existence of solution $(\nabla\Phi,\varrho,u,w,b)$ in $H^3$-norm. Firstly, we construct the solution
sequence $(\nabla\Phi^j,\varrho^j,u^j,w^j,b^j)_{j\geq0}$ by solving iteratively the following Cauchy problem for $j\geq 0$:
\begin{align}\label{3.143}
\begin{cases}
\partial_t\varrho^{j+1}+\textrm{div}~u^{j+1}=M_1^{j+1},& \\
\partial_tu^{j+1}+\nabla\varrho^{j+1}-\Delta u^{j+1}-\nabla \textrm{div}~u^{j+1}
- \nabla\times w^{j+1}-\nabla\Phi^{j+1}=M_2^{j+1},&\\
\partial_tw^{j+1}+2 w^{j+1}-\Delta w^{j+1}-2\nabla \textrm{div}~w^{j+1}-\nabla \times u^{j+1}=M_3^{j+1},&\\
\partial_tb^{j+1}-\Delta b^{j+1}= M_4^{j+1},&\\
\Delta\Phi^{j+1}=\varrho^{j+1},&\\
\textrm{div}~b^{j+1}=0, \quad t>0, x\in\mathbb{R}^3,
\end{cases}
\end{align}
with initial data
\begin{equation}\label{3.144}
(\varrho^{j+1},u^{j+1},w^{j+1},b^{j+1})(x,0)=(\varrho_0,u_0,w_0,b_0)\longrightarrow(0,0,0,0) \quad \text{as}\  |x|\longrightarrow \infty,
\end{equation}
where the nonlinear terms $M_i^{j+1}$ $(i=1,2,3,4)$ are defined as
\begin{align*}
M_1^{j+1}=&-(\varrho^{j}\textrm{div}~ u^{j+1}+u^j\cdot\nabla \varrho^{j+1}),\\
M_2^{j+1}=&-u^j\cdot \nabla u^{j+1}-f(\varrho^j)[\Delta u^{j+1}+\nabla \textrm{div}~u^{j+1}+ \nabla \times w^{j+1}]\\
&-h(\varrho^j)\nabla\varrho^{j+1}+g(\varrho^j)[(\nabla\times b^{j+1})\times b^j],\\
M_3^{j+1}=&-u^j\cdot \nabla w^{j+1}-f(\varrho^j)[\Delta w^{j+1}+2\nabla \textrm{div}~w^{j+1}-2 w^{j+1}+\nabla \times u^{j+1}],\\
M_4^{j+1}=&b^j\cdot \nabla u^{j+1}-u^j\cdot \nabla b^{j+1}-b^j\textrm{div}~u^{j+1},
\end{align*}
where $(\nabla\Phi^0,\varrho^0,u^0,w^0,b^0)\equiv (0,0,0,0,0)$ is set at initial step. In what follows, for simplicity, we may denote
$(\nabla\Phi^j,\varrho^j,$ $u^j,w^j,b^j)_{j\geq 0}$ and $(\nabla\Phi(0),\varrho_0,u_0,w_0,b_0)$ by $(\mathscr{A}^j)_{j\geq 0}$ and
$\mathscr{A}_0$, respectively. Then, one has the following result.
\begin{Lemma}\label{le3.3}
Let  all assumptions in Theorem \ref{th1.1} hold for $N=3$. Then, there holds
\begin{align}\label{3.043}
\frac{1}{2} &\frac{d}{dt}\|\mathscr{A}^{j+1}(t)\|_{H^3}^2+\|(\sqrt{\frac23}\nabla u^{j+1},\sqrt{3}\nabla w^{j+1},\nabla b^{j+1} )\|_{H^3}^2
+\frac12\|w^{j+1}\|_{H^3}^2 \nonumber\\
\leq&c \|(\varrho^j,u^j,b^j)\|_{H^3}\|(\nabla^2\Phi^{j+1},\varrho^{j+1},\nabla u^{j+1},\nabla w^{j+1},\nabla b^{j+1})\|_{H^3}^2
+c\|\varrho^j\|_{H^3}\|w^{j+1}\|_{H^3}^2\nonumber\\
&+c\|\varrho^j\|_{H^3}\|b^j\|_{H^3}\|\nabla(u^{j+1},b^{j+1})\|_{H^3}^2+c\|(\varrho^j,u^j)\|_{H^3}\|\nabla \Phi^{j+1}\|_{H^3}^2,
\end{align}
where $c$ is a positive constant independent of $j$.
\end{Lemma}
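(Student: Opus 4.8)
The plan is to mimic the energy estimate of Lemma \ref{le3.1} line by line, but now applied to the linearized iterative system \eqref{3.143}, keeping track of the fact that the coefficients $f(\varrho^j), h(\varrho^j), g(\varrho^j)$ and the transport velocities $u^j, b^j$ carry the index $j$ while the unknowns being differentiated carry the index $j+1$. First I would fix an integer $l$ with $0\le l\le 3$, apply $\nabla^l$ to \eqref{3.143}$_1$--\eqref{3.143}$_2$, take the $L^2$ inner product with $\nabla^l\varrho^{j+1}$ and $\nabla^l u^{j+1}$ respectively, and add; this produces $\frac12\frac{d}{dt}\|\nabla^l(\varrho^{j+1},u^{j+1})\|_{L^2}^2 + 2\|\nabla^{l+1}u^{j+1}\|_{L^2}^2$ on the left, together with the cross term $-\int \nabla^l\nabla\Phi^{j+1}\cdot\nabla^l u^{j+1}\,dx$, which I treat exactly as in \eqref{3.01}--\eqref{3.04} using $\Delta\Phi^{j+1}=\varrho^{j+1}$ and \eqref{3.143}$_1$: the good term $\frac12\frac{d}{dt}\|\nabla^l\nabla\Phi^{j+1}\|_{L^2}^2$ emerges, and the remainder $\int\nabla^l\nabla\Phi^{j+1}\cdot\nabla^l(\varrho^{j}\,\cdot)\, $ is absorbed into the $c\|(\varrho^j,u^j)\|_{H^3}\|\nabla\Phi^{j+1}\|_{H^3}^2$ term (note here one uses $\varrho^j$, not $\varrho^{j+1}$, in $M_1^{j+1}$). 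The nonlinear terms $M_2^{j+1}$ are then estimated by reproducing $I_1$ through $I_9$ of Lemma \ref{le3.1} verbatim, with $\varrho\mapsto\varrho^j$ or $\varrho^{j+1}$ and $u,w,b\mapsto u^{j+1},w^{j+1},b^{j+1}$ as dictated by \eqref{3.143}; every Gagliardo--Nirenberg and commutator step (Lemmas \ref{le2.1}, \ref{le2.2}, \ref{le2.3}) goes through unchanged because the only structural requirement was smallness of the $H^3$ norm of the coefficient, now $\|\varrho^j\|_{H^3}$, and because $f(\varrho^j)\sim\varrho^j$, $h(\varrho^j)\sim\varrho^j$, $g(\varrho^j)=1-f(\varrho^j)$.

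Next I would do the same for \eqref{3.143}$_3$--\eqref{3.143}$_4$: apply $\nabla^l$, pair with $\nabla^l w^{j+1}$ and $\nabla^l b^{j+1}$, and add, which yields $\frac12\frac{d}{dt}\|\nabla^l(w^{j+1},b^{j+1})\|_{L^2}^2 + 2\|\nabla^l w^{j+1}\|_{L^2}^2 + 3\|\nabla^{l+1}w^{j+1}\|_{L^2}^2 + \|\nabla^{l+1}b^{j+1}\|_{L^2}^2$ on the left. The right-hand side consists of the analogues of $I_{12}$--$I_{20}$; the curl coupling $I_{12}=\int\nabla^l(\nabla\times u^{j+1})\cdot\nabla^l w^{j+1}dx$ together with its partner $I_2=\int\nabla^l(\nabla\times w^{j+1})\cdot\nabla^l u^{j+1}dx$ from the first block is handled by Young's inequality exactly as in \eqref{3.040}, sacrificing a small fraction of $\|\nabla^{l+1}u^{j+1}\|_{L^2}^2$ and $\|\nabla^l w^{j+1}\|_{L^2}^2$; this is why on the left of \eqref{3.043} the coefficient of $\|\nabla u^{j+1}\|_{H^3}^2$ is $\tfrac23$ and $\tfrac12\|w^{j+1}\|_{H^3}^2$ survives rather than the full $2\|w^{j+1}\|_{H^3}^2$. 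The terms $I_{14},I_{15}$ (containing $f(\varrho^j)$) contribute $c\|(\varrho^j,w^{j+1})\|_{H^3}(\|\nabla^l\varrho^{j+1}\|_{L^2}^2 + \|\nabla^{l+1}w^{j+1}\|_{L^2}^2)$, $I_{16}=2\int\nabla^l(f(\varrho^j)w^{j+1})\cdot\nabla^l w^{j+1}dx$ contributes $c\|\varrho^j\|_{H^3}\|w^{j+1}\|_{H^3}^2$ (the source of the isolated $c\|\varrho^j\|_{H^3}\|w^{j+1}\|_{H^3}^2$ term in \eqref{3.043}), and $I_{18},I_{19},I_{20}$ — the terms $b^j\cdot\nabla u^{j+1}$, $u^j\cdot\nabla b^{j+1}$, $b^j\,\mathrm{div}\,u^{j+1}$ — are bilinear in $(u^{j+1},b^{j+1})$ transported/multiplied by $(u^j,b^j)$ and give $c\|(u^j,b^j)\|_{H^3}\|\nabla(u^{j+1},b^{j+1})\|_{H^3}^2$; combined with the magnetic forcing in $M_2^{j+1}$ (the $g(\varrho^j)(\nabla\times b^{j+1})\times b^j$ term, quadratic with one factor $b^j$) one assembles the $c\|\varrho^j\|_{H^3}\|b^j\|_{H^3}\|\nabla(u^{j+1},b^{j+1})\|_{H^3}^2$ contribution. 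Finally I would sum the two blocks over $l=0,1,2,3$ and collect all error terms under the single constant $c$, absorbing the various mixed norms into $\|(\nabla^2\Phi^{j+1},\varrho^{j+1},\nabla u^{j+1},\nabla w^{j+1},\nabla b^{j+1})\|_{H^3}^2$.

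The main obstacle is purely bookkeeping rather than conceptual: one must verify that in the bilinear/trilinear estimates the ``bad'' derivative always lands on the $(j+1)$-unknown that appears with a genuine dissipation ($\nabla u^{j+1}$, $\nabla w^{j+1}$, $w^{j+1}$, $\nabla b^{j+1}$, or $\varrho^{j+1}$ via the $\varrho$-dissipation that will be recovered in the companion interactive-functional lemma), while the coefficient factor with index $j$ is measured only in $H^3$ so that its smallness can eventually be propagated by induction. Concretely, one must check that each $I_i$-type term splits, after Hölder and Lemma \ref{le2.1}, so that the top-order factor is one of $\|\nabla^{l+1}(u^{j+1},w^{j+1},b^{j+1})\|_{L^2}$ or $\|\nabla^l\varrho^{j+1}\|_{L^2}$ and the remaining factors sum to an $H^3$ norm — this is exactly the content of estimates \eqref{3.05}--\eqref{3.33} with the index shift, so no new inequality is needed. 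A secondary point requiring care is that in $M_1^{j+1}$ the divergence term is $\varrho^j\,\mathrm{div}\,u^{j+1}$ rather than $\mathrm{div}(\varrho u)$, so the integration-by-parts trick \eqref{3.02} uses $u^j$ in the transport position; this produces the $\|(\varrho^j,u^j)\|_{H^3}\|\nabla\Phi^{j+1}\|_{H^3}^2$ term and forces one to retain $\|\nabla\Phi^{j+1}\|_{H^3}^2$ (not merely $\|\nabla^2\Phi^{j+1}\|_{H^3}^2$) on the right of \eqref{3.043}, consistent with the stated form. Once all terms are gathered, \eqref{3.043} follows by adding the two energy identities and relabeling constants.
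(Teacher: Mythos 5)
Your proposal mirrors the paper's own argument: the same $\nabla^l$ energy estimate on the iterated system \eqref{3.143} for each block $(\varrho^{j+1},u^{j+1},\nabla\Phi^{j+1})$ and $(w^{j+1},b^{j+1})$, the same treatment of the Poisson cross term via \eqref{3.143}$_1$ and \eqref{3.143}$_5$ to extract $\frac12\frac{d}{dt}\|\nabla^l\nabla\Phi^{j+1}\|_{L^2}^2$ with remainder controlled by $\|(\varrho^j,u^j)\|_{H^3}\|\nabla\Phi^{j+1}\|_{H^3}^2$, the same Gagliardo--Nirenberg/commutator estimates from Lemmas \ref{le2.1}--\ref{le2.3} reproducing the $I_i$-type bounds with the $j$/$j{+}1$ index shift, and the same Young inequality step \eqref{3.040} on the paired curl terms to produce the $\frac23\|\nabla u^{j+1}\|^2_{H^3}$ and $\frac12\|w^{j+1}\|^2_{H^3}$ coefficients. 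You also correctly trace the provenance of each error term in \eqref{3.043} (the $f(\varrho^j)w^{j+1}$ term, the $b^j$-bilinear and $\varrho^j b^j$-trilinear contributions, and the Poisson remainder), so this is essentially the paper's proof.
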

\begin{proof}
Similar to Lemma \ref{le3.1}, for $0\leq l\leq N$, $N=3$, for  (\ref{3.143})$_1$-(\ref{3.143})$_2$ on $\varrho^{j+1}$ and $u^{j+1}$, we get
\begin{align}\label{3.46}
\frac12&\frac{d}{dt} \int|\nabla^l \varrho^{j+1}|^2+|\nabla^l u^{j+1}|^2 dx+2\int|\nabla^{l+1} u^{j+1}|^2 dx
-\int\nabla^l\nabla\Phi^{j+1}\cdot \nabla^l u^{j+1}dx\nonumber\\
=&\int-\nabla^l(u^{j}\cdot\nabla \varrho^{j+1}) \cdot\nabla^l\varrho^{j+1}-\nabla^l(\varrho^j{\rm div}~u^{j+1}) \cdot\nabla^l\varrho^{j+1}
+\nabla^l(\nabla\times w^{j+1})\cdot \nabla^l u^{j+1}\nonumber\\
&-\nabla^l(u^j\cdot\nabla u^{j+1})\cdot\nabla^l u^{j+1}-\nabla^l(f(\varrho^j)\Delta u^{j+1})\cdot \nabla^l u^{j+1}
-\nabla^l(f(\varrho^j)\nabla \textrm{div}~u^{j+1})\cdot \nabla^l u^{j+1}\nonumber\\
&-\nabla^l(f(\varrho^j)\nabla\times w^{j+1})\cdot \nabla^l u^{j+1}-\nabla^l(h(\varrho^j)\nabla \varrho^{j+1})\cdot \nabla^l u^{j+1}
+\nabla^l[g(\varrho^j)(\nabla\times b^{j+1})\times b^j]\cdot \nabla^l u^{j+1}dx\nonumber\\
:=&\sum_{i=1}^9H_i.
\end{align}
First of all, by integration by parts and (\ref{3.143})$_5$, we infer that
\begin{align}\label{3.046}
-\int\nabla^l\nabla\Phi^{j+1}\cdot\nabla^l u^{j+1}dx&= \int\nabla^l\Phi^{j+1}\cdot\nabla^l\textrm{div}~ u^{j+1}dx \nonumber\\
&=-\int\nabla^l\Phi^{j+1}\cdot\nabla^l\partial_t\varrho^{j+1}
+\nabla^l\Phi^{j+1}\cdot\nabla^l(\varrho^{j}\textrm{div}~ u^{j+1}+u^j\cdot\nabla \varrho^{j+1})dx \nonumber\\
&=-\int\nabla^l\Phi^{j+1}\cdot\nabla^l\partial_t\Delta\Phi^{j+1}
+\nabla^l\Phi^{j+1}\cdot\nabla^l(\varrho^{j}\textrm{div}~ u^{j+1}+u^j\cdot\nabla \varrho^{j+1})dx \nonumber\\
&=\frac12\frac d{dt}\int|\nabla^l\nabla\Phi^{j+1}|^2dx
-\int\nabla^l\Phi^{j+1}\cdot\nabla^l(\varrho^{j}\textrm{div}~ u^{j+1}+u^j\cdot\nabla \varrho^{j+1})dx.
\end{align}
Applying Lemma \ref{le2.1} and the Young's inequality, it is easy to see that
\begin{align}\label{3.146}
\int\nabla^l\Phi^{j+1}\cdot\nabla^l(\varrho^{j}\textrm{div}~ u^{j+1})dx&\leq\|\nabla^l\Phi^{j+1}\|_{L^6}
\|\nabla^l(\varrho^{j}\textrm{div}~ u^{j+1})\|_{L^{\frac65}}\nonumber\\
&\leq c\|\varrho^j\|_{H^3}(\|\nabla\Phi^{j+1}\|_{H^3}^2+\|\nabla u^{j+1}\|_{H^3}^2).
\end{align}
On the other hand, if $l=0$, it holds that
\begin{align}\label{3.461}
\int\Phi^{j+1}\cdot(u^j\cdot\nabla \varrho^{j+1})dx& =-\int\nabla\Phi^{j+1}\cdot(u^j\varrho^{j+1})
+\Phi^{j+1}(\varrho^{j+1}\textrm{div}~ u^{j})dx\nonumber \\
& \leq \|\nabla \Phi^{j+1}\|_{L^2}\|u^j\|_{L^{\infty}}\|\varrho^{j+1}\|_{L^2}+\|\Phi^{j+1}\|_{L^6}\|\textrm{div}~u^j\|_{L^3}\|\varrho^{j+1}\|_{L^2}
\nonumber \\
& \leq c\|u^j\|_{H^3}(\|\nabla \Phi^{j+1}\|^2_{L^2}+\|\varrho^{j+1}\|^2_{L^2}).
\end{align}
If $l\geq1$, then we obtain
\begin{align}\label{3.058}
\int\nabla^l\Phi^{j+1}\cdot\nabla^l(u^j\cdot\nabla \varrho^{j+1})dx&=-\int(\nabla\cdot\nabla^l\Phi^{j+1})
\cdot\nabla^{l-1}(u^j\cdot\nabla \varrho^{j+1})dx\nonumber \\
&\leq \|\nabla\cdot\nabla^l\Phi^{j+1}\|_{L^2}\|\nabla^{l-1}(u^j\cdot\nabla \varrho^{j+1})\|_{L^2}\nonumber \\
&\leq c\|u^j\|_{H^3}(\|\nabla \Phi^{j+1}\|^2_{H^3}+\|\varrho^{j+1}\|^2_{H^3}).
\end{align}
Inserting (\ref{3.146})-(\ref{3.058}) into (\ref{3.046}), we have
\begin{align}\label{3.047}
-\int\nabla^l\nabla\Phi^{j+1}\cdot\nabla^l u^{j+1}dx\geq&\frac12\frac d{dt}\int|\nabla^l\nabla\Phi^{j+1}|^2dx
-c_l\|(\varrho^j,u^j)\|_{H^3}\|\nabla\Phi^{j+1}\|^2_{H^3}\nonumber\\
&-c_l\|(\varrho^j,u^j)\|_{H^3}\|(\varrho^{j+1},\nabla u^{j+1})\|^2_{H^3}.
\end{align}
Now, we focus our attention on estimating the terms $H_1$-$H_9$. Note that, it is trivial for $l=0$, thus, in what follows, we may only consider the case
that $1\leq l\leq 3$. For the term $H_1$, by integration by parts and the H\"{o}lder's inequality, we can see that
\begin{align}\label{3.47}
H_1=&-\int\sum_{s=1}^lC_l^s(\nabla^su^j\cdot \nabla^{l-s}\nabla\varrho^{j+1})\cdot\nabla^l \varrho^{j+1}dx
-\int( u^j\cdot \nabla\nabla^l\varrho^{j+1})\cdot\nabla^l\varrho^{j+1}dx\nonumber \\
\leq &c\|u^j\|_{H^3}\|\varrho^{j+1}\|^2_{H^3}.
\end{align}
Likewise, we can also obtain
\begin{align}
&H_2\leq c\|\varrho^j\|_{H^3}\|(\varrho^{j+1},\nabla u^{j+1})\|^2_{H^3},\label{3.48}  \\
&H_4\leq c\|u^j\|_{H^3}\|\nabla u^{j+1}\|^2_{H^3}.\label{3.49}
\end{align}
Next, for the term $H_5$, by the H\"{o}lder's inequality and Lemma \ref{le2.3}, it holds that
\begin{align}\label{3.50}
H_5&=\int \nabla^{l-1} (f(\varrho^j)\Delta u^{j+1})\cdot\textrm{div}\nabla^lu^{j+1}\nonumber \\
& \lesssim \sum_{s=0}^{l-1}\|\nabla^sf(\varrho^j)\cdot \nabla^{l-1-s}\Delta u^{j+1}\|_{L^2}\|\nabla^{l+1}u^{j+1}\|_{L^2}\nonumber \\
&\leq c\|\varrho^j\|_{H^3}\|\nabla u^{j+1}\|^2_{H^3}.
\end{align}
Similarly, we further obtain
\begin{align}
 & H_6\leq c\|\varrho^j\|_{H^3}\|\nabla u^{j+1}\|^2_{H^3}\label{3.51}\\
& H_7\leq c\|\varrho^j\|_{H^3}\|\nabla(w^{j+1},u^{j+1})\|^2_{H^3}\label{3.52}\\
 & H_8\leq c\|\varrho^j\|_{H^3}\|(\varrho^{j+1},\nabla u^{j+1})\|^2_{H^3}.\label{3.53}
\end{align}
Finally, for the term $H_9$, note that $g(\varrho)=1-f(\varrho)$, using the H\"{o}lder and Young's inequality, we deduce that
\begin{align}\label{3.54}
H_9=& \int g(\varrho^j)\nabla^l((\nabla\times b^{j+1})\times b^j)\cdot \nabla^lu^{j+1}dx
-\sum_{s=1}^lC_l^s\int[\nabla^sf(\varrho^j)\nabla^{l-s}((\nabla\times b^{j+1})\times b^j)]\cdot \nabla^lu^{j+1}dx\nonumber \\
\leq& \|\nabla^l((\nabla\times b^{j+1})\times b^j)\|_{L^{\frac65}}\|\nabla^lu^{j+1}\|_{L^6}
+\sum_{s=1}^{[\frac l2]}\|\nabla^s f(\varrho^j)\|_{L^3}\|\nabla^{l-s}((\nabla\times b^{j+1})\times b^j)\|_{L^2}\|\nabla^lu^{j+1}\|_{L^6}\nonumber \\
&+\sum_{s=[\frac l2]+1}^{l}\|\nabla^s f(\varrho^j)\|_{L^2}\|\nabla^{l-s}((\nabla\times b^{j+1})\times b^j)\|_{L^3}\|\nabla^lu^{j+1}\|_{L^6}\nonumber \\
\leq& c\|b^j\|_{H^3}\|\nabla( u^{j+1}, b^{j+1})\|^2_{H^3}+ c\|\varrho^j\|_{H^3}\|b^j\|_{H^3}\|\nabla( u^{j+1}, b^{j+1})\|^2_{H^3}.
\end{align}
Plugging (\ref{3.047})-(\ref{3.54}) into (\ref{3.46}), we conclude that
\begin{align}\label{3.55}
 \frac12&\frac d{dt}\int|\nabla^l\varrho^{j+1}|^2+|\nabla^lu^{j+1}|^2+|\nabla^l\nabla\Phi^{j+1}|^2dx+2\int|\nabla^{l+1}u^{j+1}|^2dx\nonumber\\
\leq& H_3+c\|(\varrho^j,u^j,b^j)\|_{H^3}\|(\nabla^2\Phi^{j+1},\varrho^{j+1},\nabla u^{j+1},\nabla w^{j+1},\nabla b^{j+1})\|^2_{H^3}\nonumber\\
&+c\|\varrho^j\|_{H^3}\|b^j\|_{H^3}\|\nabla( u^{j+1}, b^{j+1})\|^2_{H^3}+c\|(\varrho^j,u^j)\|_{H^3}\|\nabla\Phi^{j+1}\|^2_{H^3}.
\end{align}
On the other hand, from (\ref{3.143})$_3$-(\ref{3.143})$_4$, we can see that for $1\leq l\leq3$
\begin{align}\label{3.56}
\frac12&\frac d{dt}\int |\nabla^lw^{j+1}|^2+|\nabla^lb^{j+1}|^2dx
+\|\nabla^{l+1}(\sqrt{3}w^{j+1},b^{j+1})\|^2_{L^2}+2\|\nabla^lw^{j+1}\|^2_{L^2}\nonumber \\
=&\int\nabla^l(\nabla\times u^{j+1})\cdot\nabla^l w^{j+1}-\nabla^l(u^j\cdot\nabla w^{j+1})\cdot \nabla^lw^{j+1}
-\nabla^l(f(\varrho^j)\Delta w^{j+1})\cdot\nabla^lw^{j+1}\nonumber \\
&-2\nabla^l(f(\varrho^j)\nabla\textrm{div}w^{j+1})\cdot\nabla^lw^{j+1}+2\nabla^l(f(\varrho^j)w^{j+1})\cdot\nabla^lw^{j+1}
-\nabla^l(f(\varrho^j)\nabla\times u^{j+1})\cdot\nabla^lw^{j+1}\nonumber \\
&+\nabla^l(b^j\cdot\nabla u^{j+1})\cdot \nabla^l b^{j+1}-\nabla^l(u^j\cdot\nabla b^{j+1})\cdot \nabla^l b^{j+1}
-\nabla^l(b^j\textrm{div}~u^{j+1})\cdot \nabla^l b^{j+1}dx\nonumber \\
:=&\sum_{i=10}^{18}H_i.
\end{align}
Similar to (\ref{3.47})-(\ref{3.49}), we arrive at
\begin{align}
&H_{11}\leq c\|u^j\|_{H^3}\|\nabla w^{j+1}\|^2_{H^3},\label{3.57} \\
&H_{17}\leq c\|u^j\|_{H^3}\|\nabla b^{j+1}\|^2_{H^3},\label{3.58}
\end{align}
and
\begin{equation}\label{3.59}
H_{16}+H_{18}\leq c\|b^j\|_{H^3}\|\nabla (u^{j+1},b^{j+1})\|^2_{H^3}.
\end{equation}
Next, taking into account (\ref{3.50}), we are in a position to obtain
\begin{equation}\label{3.60}
H_{12}+H_{13}\leq c\|\varrho^j\|_{H^3}\|\nabla w^{j+1}\|^2_{H^3}.
\end{equation}
Furthermore, in virtue of (\ref{2.3}) and the H\"{o}lder's inequality, for the term $H_{14}$, we can find that
\begin{align}\label{3.61}
H_{14}&= 2\sum_{s=0}^{[\frac l2]}C^s_{l}\int\nabla^sf(\varrho^j)\nabla^{l-s}w^{j+1}\cdot \nabla^{L}w^{j+1} dx
+2\sum_{s=[\frac l2]+1}^{l}C^s_{l}\int\nabla^sf(\varrho^j)\nabla^{l-s}w^{j+1}\cdot \nabla^{L}w^{j+1} dx\nonumber \\
&\lesssim \sum_{s=0}^{[\frac l2]}\|\nabla^sf(\varrho^j)\|_{L^{\infty}}\|\nabla^{l-s}w^{j+1}\|_{L^{2}}\|\nabla^{l}w^{j+1}\|_{L^{2}}
+\sum_{s=[\frac l2]+1}^{l}\|\nabla^sf(\varrho^j)\|_{L^{2}}\|\nabla^{l-s}w^{j+1}\|_{L^{3}}\|\nabla^{l}w^{j+1}\|_{L^{6}}\nonumber \\
&\leq c \|\varrho^j\|_{H^3}(\|w^{j+1}\|^2_{H^3}+\|\nabla w^{j+1}\|^2_{H^3}).
\end{align}
Similar to (\ref{3.52}), the term $H_{15}$ can be estimated as
\begin{equation}\label{3.62}
H_{15}\leq c\|\varrho^j\|_{H^3}\|\nabla(u^{j+1}, w^{j+1})\|^2_{H^3}.
\end{equation}
Inserting (\ref{3.57})-(\ref{3.62}) into (\ref{3.56}), we conclude that
\begin{align}\label{3.63}
\frac12&\frac d{dt}\|\nabla^l(w^{j+1},b^{j+1}\|^2_{L^2}
+\|\nabla^{l+1}(\sqrt{3}w^{j+1},b^{j+1})\|^2_{L^2}+2\|\nabla^lw^{j+1}\|^2_{L^2}\nonumber \\
\leq&c\|(\varrho^j,u^j,b^j)\|_{H^3}\|\nabla(u^{j+1},w^{j+1},b^{j+1})\|^2_{H^3}+c\|\varrho^j\|_{H^3}\|w^{j+1}\|^2_{H^3}+H_{10}.
\end{align}
Now, combining (\ref{3.55})  and (\ref{3.63}), taking into account that $H_3+H_{10}=2H_{10}$ and (\ref{3.040}), then we have (\ref{3.043}).
This completes the proof of Lemma \ref{le3.3}.
\end{proof}
Similar to Lemma \ref{le3.2}, now, we are going to deduce the dissipative estimate of $\nabla\Phi^{j+1},\varrho^{j+1}$.
\begin{Lemma}\label{le3.4}
Let  all assumptions in Theorem \ref{th1.1} hold for $N=3$. Then, for $l=0,1,2$, there holds
\begin{align}\label{3.64}
\frac{d}{dt}&\sum_{l=0}^2\int\nabla^lu^{j+1}\cdot\nabla^{l+1}\varrho^{j+1}dx
+\frac{1}{4}\|(\varrho^{j+1},\nabla\varrho^{j+1},\nabla^2\Phi^{j+1},\nabla^3\Phi^{j+1})\|^2_{H^2} \nonumber\\
\leq&\|\nabla u^{j+1}\|^2_{H^2}+2\|\nabla w^{j+1}\|^2_{H^2}+4\|\nabla^2 u^{j+1}\|^2_{H^2}
+c\|\varrho^j\|_{H^3}\|b^j\|_{H^3}\|\nabla(\varrho^{j+1},b^{j+1})\|^2_{H^2}\nonumber\\
&+c\|(\varrho^j,u^j,b^j)\|_{H^3}(\|\nabla(\varrho^{j+1},u^{j+1},w^{j+1},b^{j+1})\|^2_{H^2}+\|\nabla^2u^{j+1}\|^2_{H^2}).
\end{align}
\end{Lemma}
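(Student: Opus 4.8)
The plan is to mimic the proof of Lemma \ref{le3.2}, now applied to the linearized iteration scheme \eqref{3.143}. First I would apply $\nabla^l$ with $l=0,1,2$ to the momentum equation \eqref{3.143}$_2$ and take the $L^2$-inner product with $\nabla^{l+1}\varrho^{j+1}$. The linear terms on the left produce $\|\nabla^{l+1}\varrho^{j+1}\|_{L^2}^2$ (from the $\nabla\varrho^{j+1}$ term). Then, exactly as in \eqref{3.42}, I would rewrite $\int\nabla^l\partial_t u^{j+1}\cdot\nabla^{l+1}\varrho^{j+1}\,dx$ as $\frac{d}{dt}\int\nabla^l u^{j+1}\cdot\nabla^{l+1}\varrho^{j+1}\,dx - \int\nabla^l u^{j+1}\cdot\partial_t\nabla^{l+1}\varrho^{j+1}\,dx$, and use \eqref{3.143}$_1$ to replace $\partial_t\varrho^{j+1}$ by $-\textrm{div}\,u^{j+1}+M_1^{j+1}$, generating a $-\|\nabla^{l+1}u^{j+1}\|_{L^2}^2$ term plus a nonlinear commutator-type term $J_{12}$-analogue involving $M_1^{j+1}=-(\varrho^j\textrm{div}\,u^{j+1}+u^j\cdot\nabla\varrho^{j+1})$.

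Next I would estimate each resulting term. The linear terms $\int\nabla^l\Delta u^{j+1}\cdot\nabla^{l+1}\varrho^{j+1}$ and $\int\nabla^l\nabla\textrm{div}\,u^{j+1}\cdot\nabla^{l+1}\varrho^{j+1}$ are controlled by Cauchy's inequality by $4\|\nabla^{l+2}u^{j+1}\|_{L^2}^2+\frac14\|\nabla^{l+1}\varrho^{j+1}\|_{L^2}^2$, and $\int\nabla^l\nabla\times w^{j+1}\cdot\nabla^{l+1}\varrho^{j+1}$ by $2\|\nabla^{l+1}w^{j+1}\|_{L^2}^2+\frac14\|\nabla^{l+1}\varrho^{j+1}\|_{L^2}^2$ using $|\nabla\times w|^2\le 2|\nabla w|^2$. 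The term $\int\nabla^l\nabla\Phi^{j+1}\cdot\nabla^l\nabla\varrho^{j+1}\,dx$ is handled by integration by parts and the Poisson equation \eqref{3.143}$_5$: it equals $-\int|\nabla^l\varrho^{j+1}|^2\,dx$, which supplies the crucial $\varrho^{j+1}$ dissipation, and \eqref{3.143}$_5$ also converts $\|\nabla^{l+1}\nabla\Phi^{j+1}\|_{L^2}^2=\|\nabla^l\varrho^{j+1}\|_{L^2}^2$ and $\|\nabla^{l+2}\nabla\Phi^{j+1}\|_{L^2}^2=\|\nabla^{l+1}\varrho^{j+1}\|_{L^2}^2$, exactly as in \eqref{3.042}. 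The nonlinear terms $M_2^{j+1}$-pieces are estimated by the same product/commutator decompositions as in Lemma \ref{le3.1} and Lemma \ref{le3.2}, invoking Lemma \ref{le2.1}, Lemma \ref{le2.2} and Lemma \ref{le2.3}: the convection term $u^j\cdot\nabla u^{j+1}$ gives $c\|u^j\|_{H^3}(\|\nabla\varrho^{j+1}\|_{H^2}^2+\|\nabla^2 u^{j+1}\|_{H^2}^2)$; the terms $f(\varrho^j)[\Delta u^{j+1}+\nabla\textrm{div}\,u^{j+1}+\nabla\times w^{j+1}]$ and $h(\varrho^j)\nabla\varrho^{j+1}$ are bounded by $c\|\varrho^j\|_{H^3}(\|\nabla(\varrho^{j+1},u^{j+1},w^{j+1})\|_{H^2}^2+\|\nabla^2 u^{j+1}\|_{H^2}^2)$; and the magnetic term $g(\varrho^j)[(\nabla\times b^{j+1})\times b^j]$ contributes $c\|b^j\|_{H^3}(1+\|\varrho^j\|_{H^3})\|\nabla(\varrho^{j+1},b^{j+1})\|_{H^2}^2$, matching the right-hand side of \eqref{3.64}. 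Finally the $J_{12}$-analogue from $M_1^{j+1}$ is bounded as in Lemma \ref{le3.2} by $c\|(\varrho^j,u^j)\|_{H^3}(\|\nabla\varrho^{j+1}\|_{H^2}^2+\|\nabla u^{j+1}\|_{H^2}^2)$.

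Summing over $l=0,1,2$ and collecting terms, the $-\|\nabla^{l+1}u^{j+1}\|_{L^2}^2$ and $4\|\nabla^{l+2}u^{j+1}\|_{L^2}^2$ and $2\|\nabla^{l+1}w^{j+1}\|_{L^2}^2$ contributions are moved to the right-hand side (giving the $\|\nabla u^{j+1}\|_{H^2}^2+2\|\nabla w^{j+1}\|_{H^2}^2+4\|\nabla^2 u^{j+1}\|_{H^2}^2$ on the right of \eqref{3.64}), while keeping $\frac14\|(\varrho^{j+1},\nabla\varrho^{j+1},\nabla^2\Phi^{j+1},\nabla^3\Phi^{j+1})\|_{H^2}^2$ on the left after absorbing the $\frac14\|\nabla^{l+1}\varrho^{j+1}\|_{L^2}^2$ slacks against the full $\|\nabla^{l+1}\varrho^{j+1}\|_{L^2}^2$ produced by the linear pressure term. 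The main obstacle will be the careful bookkeeping of which norm counts appear: because the scheme is linear in the $(j+1)$-unknowns with $j$-coefficients, one must be sure every nonlinear estimate only costs derivatives on the $(j+1)$-functions up to order $3$ (so that $\|\cdot\|_{H^3}$ on the iterate suffices) and that the $H^3$-smallness is carried entirely by the $j$-coefficients; the Gagliardo–Nirenberg splittings of Lemma \ref{le3.1}–\ref{le3.2} are exactly designed for this, so no genuinely new difficulty arises beyond tracking constants independent of $j$.
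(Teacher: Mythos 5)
Your proposal is correct and follows essentially the same approach as the paper: apply $\nabla^l$ to the momentum equation of the iteration scheme, pair with $\nabla^{l+1}\varrho^{j+1}$, rewrite the time-derivative term via the continuity equation to produce the cross functional and a $-\|\nabla^{l+1}u^{j+1}\|^2$ term, use Cauchy's inequality (with constants $4$ and $2$) on the linear viscous and micro-rotation terms, exploit the Poisson equation to obtain the $-\|\nabla^l\varrho^{j+1}\|^2$ dissipation and convert to $\nabla\Phi^{j+1}$ norms, and bound the coefficient-times-linear nonlinear pieces via the same Gagliardo--Nirenberg/Lemma~\ref{le2.3} splittings as in Lemmas~\ref{le3.1}--\ref{le3.2}, keeping the smallness on the $j$-level iterates. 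The only cosmetic difference is that you bound the convection term $u^j\cdot\nabla u^{j+1}$ by $\|\nabla^2 u^{j+1}\|^2_{H^2}$ rather than the paper's $\|\nabla u^{j+1}\|^2_{H^2}$, which is still admissible since both appear on the right of \eqref{3.64}.
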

\begin{proof}
Applying $\nabla^l$ $(0\leq l\leq2)$ to (\ref{3.143})$_2$ and then taking the $L^2$ inner product with $\nabla^{l+1}\varrho^{j+1}$, we obtain
\begin{align*}
&\int\nabla^l\partial_t u^{j+1}\cdot \nabla^{l+1}\varrho^{j+1}dx+\|\nabla^{l+1}\varrho^{j+1}\|^2_{L^2}
= \int \nabla^l\Delta u^{j+1}\cdot\nabla^{l+1}\varrho^{j+1}dx\\
&+\int \nabla^l\nabla\textrm{div}~ u^{j+1}\cdot\nabla^{l+1}\varrho^{j+1}dx+\int \nabla^l\nabla\times w^{j+1}\cdot\nabla^{l+1}\varrho^{j+1}dx
-\int \nabla^l(u^j\cdot\nabla u^{j+1})\cdot\nabla^{l+1}\varrho^{j+1}dx\\
&-\int\nabla^l(f(\varrho^j)\Delta u^{j+1})\cdot\nabla^{l+1}\varrho^{j+1}dx
-\int\nabla^l(f(\varrho^j)\nabla\textrm{div}~ u^{j+1})\cdot\nabla^{l+1}\varrho^{j+1}dx\\
&-\int\nabla^l(f(\varrho^j)\nabla\times w^{j+1})\cdot\nabla^{l+1}\varrho^{j+1}dx
-\int\nabla^l(h(\varrho^j)\nabla\varrho^{j+1})\cdot\nabla\varrho^{j+1}dx\\
&+\int\nabla^l(g(\varrho^j)(\nabla\times b^{j+1})\times b^j)\cdot\nabla^{l+1}\varrho^{j+1}dx
+\int \nabla^l\nabla\Phi^{j+1}\cdot\nabla^{l+1}\varrho^{j+1}dx\\
&:=\sum_{i=1}^{10}W_i.
\end{align*}
On the other hand, we can see that
\begin{align}\label{3.65}
\sum_{i=1}^{10}W_i=&\frac{d}{dt}\int\nabla^lu^{j+1}\cdot\nabla^{l+1}\varrho^{j+1}dx
- \int\nabla^lu^{j+1}\cdot\nabla^{l+1}\partial_t\varrho^{j+1}dx+\|\nabla^{l+1}\varrho^{j+1}\|^2_{L^2}\nonumber \\
=&\frac{d}{dt}\int\nabla^lu^{j+1}\cdot\nabla^{l+1}\varrho^{j+1}dx
+\int\nabla^lu^{j+1}\cdot\nabla^{l+1}\textrm{div}~u^{j+1}dx\nonumber \\
&+\int\nabla^lu^{j+1}\cdot\nabla^{l+1}(\varrho^j\textrm{div}~u^{j+1}+u^j\cdot\nabla \varrho^{j+1})dx+\|\nabla^{l+1}\varrho^{j+1}\|^2_{L^2}\nonumber \\
:=&\frac{d}{dt}\int\nabla^lu^{j+1}\cdot\nabla^{l+1}\varrho^{j+1}dx-\|\nabla^{l+1}u^{j+1}\|^2_{L^2}+\|\nabla^{l+1}\varrho^{j+1}\|^2_{L^2}
+W_{11}.
\end{align}
Now, we turn to estimate the terms $W_1-W_{11}$. First, by the Cauchy's inequality, we can see that $W_1$--$W_3$ can be estimated as
\begin{align*}
 W_1+W_2&\leq \frac14\|\nabla^{l+1}\varrho^{j+1}\|^2_{L^2}+4\|\nabla^{l+2}u^{j+1}\|^2_{L^2}  \\
 & \leq \frac14\|\nabla\varrho^{j+1}\|^2_{H^2}+4\|\nabla^{2}u^{j+1}\|^2_{H^2}\\
W_3&\leq \frac14\|\nabla^{l+1}\varrho^{j+1}\|^2_{L^2}+2\|\nabla^{l+1}w^{j+1}\|^2_{L^2}  \\
 & \leq \frac14\|\nabla\varrho^{j+1}\|^2_{H^2}+2\|\nabla^{2}w^{j+1}\|^2_{H^2}.
\end{align*}
Furthermore, applying the H\"{o}lder's inequality and Lemma \ref{le2.1}, for the term $W_4$, we have
\begin{equation*}
W_4\leq c\|u^j\|_{H^3}\|\nabla(\varrho^{j+1},u^{j+1})\|^2_{H^2}.
\end{equation*}
Similar to (\ref{3.50}), we obtain
\begin{equation*}
W_5+W_6\leq c\|\varrho^j\|_{H^3}(\|\nabla\varrho^{j+1}\|^2_{H^2}+\|\nabla^2u^{j+1}\|^2_{H^2}).
\end{equation*}
In addition, taking into account (\ref{3.52}), we deduce that
\begin{equation*}
W_7+W_8\leq c\|\varrho^j\|_{H^3}\|\nabla(\varrho^{j+1},w^{j+1})\|^2_{H^2}).
\end{equation*}
Next, similar to (\ref{3.54}), it holds that
\begin{align*}
W_9&=\int g(\varrho^j)\nabla^l((\nabla\times b^{j+1})\times b^j)\cdot \nabla^{l+1}\varrho^{j+1}dx
-\sum_{s=1}^lC_l^s\int[\nabla^s f(\varrho^j)\nabla^{l-s}((\nabla\times b^{j+1})\times b^j)]\cdot \nabla^{l+1}\varrho^{j+1}dx\\
&\leq c\|b^j\|_{H^3}\|\nabla(\varrho^{j+1},b^{j+1})\|^2_{H^2})+c\|\varrho^j\|_{H^3}\|b^j\|_{H^3}\|\nabla(\varrho^{j+1},b^{j+1})\|^2_{H^2}).
\end{align*}
Moreover, by (\ref{3.143})$_5$, it is obvious that
\begin{equation*}
W_{10}=-\int|\nabla^l\varrho^{j+1}|^2dx.
\end{equation*}
Finally, the term $W_{11}$ can be estimated as
\begin{align*}
W_{11}&=-\int(\nabla\cdot\nabla^lu^{j+1})\cdot\nabla^{l}(\varrho^j\textrm{div}~u^{j+1}+u^j\cdot\nabla\varrho^{j+1})dx  \\
 &\leq c\|(\varrho^j,u^j)\|_{H^3}\|(\nabla \varrho^{j+1},\nabla u^{j+1})\|^2_{H^2}.
\end{align*}
Putting these estimates into (\ref{3.65}) and summing up with $l=0,1,2$, we finally obtain (\ref{3.64}). This completes the proof of Lemma \ref{le3.4}.
\end{proof}
Based on Lemma \ref{le3.3} and \ref{le3.4},  we then immediately have the following result.
\begin{Theorem}\label{th3.1}
Let  all assumptions in Theorem \ref{th1.1} hold for $N=3$. There are constants $\varepsilon_1>0$, $T_1>0$, $M_0>0$ such that if
$\|\mathscr{A}_0\|_{H^3}\leq\varepsilon_1$, then for each $j\geq0$, $\mathscr{A}^j\in C([0,T_1];H^3)$ is well defined and
\begin{equation}\label{3.065}
\sup_{0\leq t\leq T_1}\|\mathscr{A}^j(t)\|_{H^3}\leq M_0,\quad j\geq0.
\end{equation}
Furthermore, $(\mathscr{A}^j)_{j\geq0}$ is a Cauchy sequence in the Banach space $C([0,T_1];H^3)$, the corresponding limit function denoted by
$\mathscr{A}(t)$ belongs to $C([0,T_1];H^3)$ with
\begin{equation}\label{3.66}
\sup_{0\leq t\leq T_1}\|\mathscr{A}(t)\|_{H^3}\leq M_0,
\end{equation}
and $\mathscr{A}=(\nabla\Phi,\varrho,u,w,b)$ is a solution over $[0,T_1]$ to the Cauchy problem (\ref{1.3})-(\ref{1.4}). Finally, the Cauchy problem
(\ref{1.3})-(\ref{1.4}) admits at most one solution in $C([0,T_1];H^3)$ satisfying (\ref{3.66}).
\end{Theorem}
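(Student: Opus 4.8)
The plan is to combine the a priori estimates of Lemmas~\ref{le3.3}--\ref{le3.4} with a Gr\"onwall argument to produce the uniform bound \eqref{3.065}, then to estimate the differences $\delta\mathscr{A}^{j+1}:=\mathscr{A}^{j+1}-\mathscr{A}^{j}$ \emph{one order of regularity below} the uniform bound in order to get a contraction, and finally to pass to the limit and upgrade its regularity; uniqueness will be an energy estimate for the difference of two solutions. For Step~1 (the iterates are well defined and uniformly bounded) I would induct on $j$, the case $\mathscr{A}^{0}\equiv0$ being trivial. Assuming $\mathscr{A}^{j}\in C([0,T_{1}];H^{3})$ with $\sup_{[0,T_{1}]}\|\mathscr{A}^{j}\|_{H^{3}}\le M_{0}$, the bound $\|\varrho^{j}\|_{L^{\infty}}\lesssim\|\varrho^{j}\|_{H^{2}}\lesssim M_{0}<1$ guarantees that $f(\varrho^{j}),h(\varrho^{j}),g(\varrho^{j})$ are smooth and bounded and that the effective diffusion coefficient $1+f(\varrho^{j})$ is bounded below, so \eqref{3.143}--\eqref{3.144} is a linear symmetric-hyperbolic--parabolic system, uniquely solvable in $C([0,T_{1}];H^{3})$ by the standard Galerkin/energy method (the a priori bound being exactly the one below). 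Then I set $\mathscr{G}^{j+1}(t):=\tfrac12\|\mathscr{A}^{j+1}\|_{H^{3}}^{2}+\eta\sum_{l=0}^{2}\int\nabla^{l}u^{j+1}\cdot\nabla^{l+1}\varrho^{j+1}\,dx$ for a fixed small $\eta>0$, so that $\mathscr{G}^{j+1}\sim\|\mathscr{A}^{j+1}\|_{H^{3}}^{2}$; adding $\eta\times$\eqref{3.64} to \eqref{3.043}, the finite-coefficient terms $\eta(\|\nabla u^{j+1}\|_{H^{2}}^{2}+2\|\nabla w^{j+1}\|_{H^{2}}^{2}+4\|\nabla^{2}u^{j+1}\|_{H^{2}}^{2})$ on the right are absorbed by $\tfrac23\|\nabla u^{j+1}\|_{H^{3}}^{2}+3\|\nabla w^{j+1}\|_{H^{3}}^{2}$ on the left. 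Writing $\mathscr{D}^{j+1}$ for the resulting full dissipation, which controls $\|\varrho^{j+1}\|_{H^{3}}^{2}$, $\|w^{j+1}\|_{H^{3}}^{2}$, $\|\nabla(u^{j+1},w^{j+1},b^{j+1})\|_{H^{3}}^{2}$ and $\|\nabla^{2}\Phi^{j+1}\|_{H^{2}}^{2}$ (via the Poisson equation), and noting $\|\nabla\Phi^{j+1}\|_{H^{3}}^{2}\lesssim\|\nabla\Phi^{j+1}\|_{L^{2}}^{2}+\|\varrho^{j+1}\|_{H^{2}}^{2}\lesssim\mathscr{G}^{j+1}+\mathscr{D}^{j+1}$, every right-hand side term of Lemmas~\ref{le3.3}--\ref{le3.4} is $\le cM_{0}(\mathscr{G}^{j+1}+\mathscr{D}^{j+1})$; hence for $M_{0}$ small $\frac{d}{dt}\mathscr{G}^{j+1}+\tfrac12 c_{0}\mathscr{D}^{j+1}\le cM_{0}\mathscr{G}^{j+1}$, and Gr\"onwall on $[0,T_{1}]$ with $\mathscr{G}^{j+1}(0)\sim\|\mathscr{A}_{0}\|_{H^{3}}^{2}\le\varepsilon_{1}^{2}$ gives $\|\mathscr{A}^{j+1}\|_{H^{3}}^{2}\le c\,e^{cM_{0}T_{1}}\varepsilon_{1}^{2}$. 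Fixing $T_{1}$ first and then taking $\varepsilon_{1}$ small enough that the induced $M_{0}$ meets the two smallness requirements used above closes the induction, proving \eqref{3.065} and also $\sup_{j}\int_{0}^{T_{1}}\mathscr{D}^{j}\,dt\lesssim\varepsilon_{1}^{2}$.

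For Step~2 (contraction): the differences $\delta\mathscr{A}^{j+1}$ solve a linear system whose coefficients acting on $\delta\mathscr{A}^{j+1}$ involve only $(\varrho^{j-1},u^{j-1},b^{j-1})$ and whose source terms carry every factor $\delta\mathscr{A}^{j}$ multiplied by a quantity controlled by $M_{0}$ (e.g.\ $\delta\varrho^{j}\,\mathrm{div}\,u^{j+1}$, $\delta u^{j}\cdot\nabla\varrho^{j+1}$, $[f(\varrho^{j})-f(\varrho^{j-1})]\Delta u^{j}=f'(\cdot)\,\delta\varrho^{j}\,\Delta u^{j}$, $g'(\cdot)\,\delta\varrho^{j}\,(\nabla\times b^{j+1})\times b^{j}$, and so on). I would run the $H^{2}$ energy estimate for $\delta\mathscr{A}^{j+1}$ — one derivative below the uniform $H^{3}$ bound — mimicking Lemmas~\ref{le3.3}--\ref{le3.4}, and augment it with $\|\Lambda^{-1}\delta\varrho^{j+1}\|_{L^{2}}^{2}$, whose evolution is controlled via $\|\Lambda^{-1}\,\mathrm{div}\,\delta u^{j+1}\|_{L^{2}}\lesssim\|\delta u^{j+1}\|_{L^{2}}$ and $\|\Lambda^{-1}(\cdot)\|_{L^{2}}\lesssim\|\cdot\|_{L^{6/5}}$ on the nonlinear part, in order to close the low-frequency part of $\nabla\delta\Phi^{j+1}=\nabla\Delta^{-1}\delta\varrho^{j+1}$. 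Denoting this augmented $H^{2}$-type energy by $\mathscr{N}^{j+1}$, the estimate takes the form $\frac{d}{dt}\mathscr{N}^{j+1}\le cM_{0}\mathscr{N}^{j+1}+cM_{0}\mathscr{N}^{j}$, the dissipative contributions — including the top-order part of $[f(\varrho^{j})-f(\varrho^{j-1})]\Delta u^{j}$, handled by one integration by parts that lands a derivative on $\|\nabla^{3}\delta u^{j+1}\|_{L^{2}}^{2}$ — having been absorbed on the left. Since $\delta\mathscr{A}^{j+1}(0)=0$, Gr\"onwall gives $\sup_{[0,T_{1}]}\mathscr{N}^{j+1}\le cM_{0}e^{cM_{0}T_{1}}T_{1}\sup_{[0,T_{1}]}\mathscr{N}^{j}$; shrinking $\varepsilon_{1}$ (hence $M_{0}$) makes the factor $<\tfrac12$, so $(\mathscr{A}^{j})_{j\ge0}$ is Cauchy in $C([0,T_{1}];H^{2})$.

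For Step~3 (limit, regularity, uniqueness): let $\mathscr{A}$ be the $C([0,T_{1}];H^{2})$-limit. By \eqref{3.065} and weak-$*$ compactness, $\mathscr{A}\in L^{\infty}(0,T_{1};H^{3})$ with $\|\mathscr{A}(t)\|_{H^{3}}\le M_{0}$, and interpolation with the $H^{2}$ convergence gives $\mathscr{A}^{j}\to\mathscr{A}$ in $C([0,T_{1}];H^{s})$ for every $s\in[2,3)$; this lets one pass to the limit in \eqref{3.143}--\eqref{3.144} (the nonlinear terms converge in $C([0,T_{1}];L^{2})$, using $H^{2}\hookrightarrow L^{\infty}$), so $\mathscr{A}=(\nabla\Phi,\varrho,u,w,b)$ solves \eqref{1.3}--\eqref{1.4}. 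That $\mathscr{A}\in C([0,T_{1}];H^{3})$ with \eqref{3.66} then follows from $\mathscr{A}\in L^{\infty}_{t}H^{3}$, weak continuity in $H^{3}$, and the energy identity for $\mathscr{A}$ itself (Lemmas~\ref{le3.1}--\ref{le3.2}, legitimate since $\|\mathscr{A}\|_{H^{3}}\le M_{0}\le\delta$), which makes $t\mapsto\|\mathscr{A}(t)\|_{H^{3}}$ continuous and hence upgrades weak to strong continuity. For uniqueness, the difference of two solutions of \eqref{1.3}--\eqref{1.4} with the same data and satisfying \eqref{3.66} solves a linear system with coefficients built from the two solutions; the same augmented $H^{2}$ energy estimate as in Step~2 yields a closed Gr\"onwall inequality with vanishing initial value, forcing the difference to vanish.

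The point I expect to be the main obstacle is Step~2: a direct $H^{3}$ estimate of the difference would require $\nabla^{4}u^{j}\in L^{2}$ because of the terms $[f(\varrho^{j})-f(\varrho^{j-1})]\Delta u^{j}$ and their $\nabla\,\mathrm{div}\,u^{j}$- and $\nabla\times w^{j}$-analogues, which is not available for an $H^{3}$ solution. The remedy is to work one order lower, in $H^{2}$, and to exploit the parabolic gain $\|\nabla^{3}\delta u^{j+1}\|_{L^{2}}^{2}$ to absorb these worst terms after an integration by parts; a secondary nuisance is that the electric-field component forces one to carry the negative-order quantity $\|\Lambda^{-1}\delta\varrho^{j+1}\|_{L^{2}}$ separately in the difference energy, since it is the only part of $\nabla\delta\Phi^{j+1}$ not controlled by the $H^{2}$-norm of $\delta\varrho^{j+1}$.
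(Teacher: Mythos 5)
Your overall strategy is correct and would prove the theorem, but it departs from the paper's proof in one notable respect: you contract the iterates in $H^2$ (one order below the uniform bound) and then upgrade the limit to $H^3$ by weak-$*$ compactness, weak continuity, and the energy identity, whereas the paper runs the contraction estimate directly in $H^3$. The reason you give for avoiding the $H^3$ contraction --- that a direct $H^3$ estimate of $\mathscr{A}^{j+1}-\mathscr{A}^j$ ``would require $\nabla^4 u^j\in L^2$, which is not available for an $H^3$ solution'' --- is where the two approaches diverge and is not quite right. Although $\nabla^4 u^j$ is not in $L^\infty_tL^2_x$, the uniform dissipation estimate \eqref{3.70} satisfied by every iterate gives $\int_0^{T_1}\|\nabla u^j\|^2_{H^3}\,ds\lesssim M_0^2$, i.e.\ $\nabla^4 u^j\in L^2_tL^2_x$. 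Testing the source term $\bigl[f(\varrho^j)-f(\varrho^{j-1})\bigr]\Delta u^j$ against $\nabla^3(u^{j+1}-u^j)$ and throwing one derivative onto the dissipative factor $\nabla^4(u^{j+1}-u^j)$, the worst contribution is of the form $\|\varrho^j-\varrho^{j-1}\|_{L^\infty}\|\nabla^4 u^j\|_{L^2}\|\nabla^4(u^{j+1}-u^j)\|_{L^2}$; Young's inequality absorbs the last factor on the left and leaves $\|\mathscr{A}^j-\mathscr{A}^{j-1}\|^2_{H^3}\|\nabla u^j\|^2_{H^3}$, whose \emph{time integral} is controlled by \eqref{3.70} and can be made small by shrinking $M_0$ and $T_1$. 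This is exactly what the paper's contraction inequality encodes with the factor $\|(\nabla^2\Phi^j,\varrho^j,\nabla u^j,\nabla w^j,\nabla b^j)\|_{H^3}$ on the right. Your $H^2$ contraction (plus $\|\Lambda^{-1}(\varrho^{j+1}-\varrho^j)\|_{L^2}$ to track the low-frequency part of $\nabla\Phi^{j+1}-\nabla\Phi^j$) is the more standard and robust route --- it does not rely on the time-integrated dissipation of the previous iterate nor on $T_1$ being small for the contraction factor --- but it requires the extra step of upgrading the $H^2$-limit to a $C([0,T_1];H^3)$ solution, which the paper avoids by contracting at top order.
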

\begin{proof}
First of all, we shall prove (\ref{3.065}) by induction. The trivial case is $j=0$ since $\mathscr{A}^0=0$ by the assumption at initial step. Suppose that
it is true for $j\geq0$ with $M_0>0$ small enough to be determined later. Now, we propose to prove it for $j+1$, we need some energy estimates on
$\mathscr{A}^{j+1}$. By the induction assumptions and Lemma \ref{le3.3}-\ref{le3.4}, we have
\begin{align}\label{3.67}
\frac{1}{2}& \frac{d}{dt}\|\mathscr{A}^{j+1}(t)\|_{H^3}^2+\|(\sqrt{\frac23}\nabla u^{j+1},\sqrt{3}\nabla w^{j+1},\nabla b^{j+1} )\|_{H^3}^2
+\frac12\|w^{j+1}\|_{H^3}^2 \nonumber\\
\leq& cM_0\|(\nabla^2\Phi^{j+1},\varrho^{j+1},\nabla u^{j+1},\nabla w^{j+1},\nabla b^{j+1})\|_{H^3}^2
+cM_0^2\|\nabla(u^{j+1},b^{j+1})\|_{H^3}^2+cM_0\|w^{j+1}\|_{H^3}^2\nonumber\\
&+cM_0\|\nabla\Phi^{j+1}\|_{H^3}^2,
\end{align}
and
\begin{align}\label{3.68}
\frac{d}{dt}&\sum_{l=0}^2\int\nabla^lu^{j+1}\cdot\nabla^{l+1}\varrho^{j+1}dx
+\frac{1}{4}\|(\varrho^{j+1},\nabla\varrho^{j+1},\nabla^2\Phi^{j+1},\nabla^3\Phi^{j+1})\|^2_{H^2} \nonumber\\
\leq&\|\nabla u^{j+1}\|^2_{H^2}+2\|\nabla w^{j+1}\|^2_{H^2}+4\|\nabla^2 u^{j+1}\|^2_{H^2}
+cM_0^2\|\nabla(\varrho^{j+1},b^{j+1})\|^2_{H^2}\nonumber\\
&+cM_0(\|\nabla(\varrho^{j+1},u^{j+1},w^{j+1},b^{j+1})\|^2_{H^2}+\|\nabla^2u^{j+1}\|^2_{H^2}).
\end{align}
Thus, by linear combination of (\ref{3.67}) and (\ref{3.68}), we deduce that there exists an instant energy functional
$\|\tilde{\mathscr{A}}^{j+1}\|^2_{H^3}$ is equivalent to $\|\mathscr{A}^{j+1}\|^2_{H^3}$ such that
\begin{align}\label{3.69}
\frac{d}{dt}&\|\tilde{\mathscr{A}}^{j+1}(t)\|^2_{H^3}+\lambda\|\nabla(\nabla\Phi^{j+1},u^{j+1},w^{j+1},b^{j+1})\|^2_{H^3}
+\lambda\|(\varrho^{j+1},w^{j+1})\|^2_{H^3} \nonumber\\
 \leq& c M_0\|(\nabla^2\Phi^{j+1},\varrho^{j+1},\nabla u^{j+1},\nabla w^{j+1},\nabla b^{j+1})\|^2_{H^3}
+cM_0^2(\|\nabla( u^{j+1}, b^{j+1})\|^2_{H^3}+\|\nabla\varrho^{j+1}\|^2_{H^2})\nonumber\\
&+ cM_0(\|\nabla\mathscr{A}^{j+1}\|^2_{H^2}+\|\nabla^2u^{j+1}\|^2_{H^2}+\|w^{j+1}\|^2_{H^3})+cM_0\|\tilde{\mathscr{A}}^{j+1}(t)\|_{H^3}^2,
\end{align}
for some $\lambda\in (0,1)$. Further, by the Gronwall's inequality and the property of $\tilde{\mathscr{A}}^{j+1}(t)$, it holds that
\begin{align*}
&\|\mathscr{A}^{j+1}(t)\|^2_{H^3}+\lambda \int_0^t\|\nabla(\nabla\Phi^{j+1},u^{j+1},w^{j+1}, b^{j+1})(s)\|^2_{H^3}
+\|(\varrho^{j+1},w^{j+1})(s)\|^2_{H^3}ds\nonumber  \\
&\leq e^{cM_0t}\left[\varepsilon_1^2+cM_0\int_0^t \|(\nabla^2\Phi^{j+1},\varrho^{j+1},\nabla u^{j+1},\nabla w^{j+1},\nabla b^{j+1})(s)\|^2_{H^3}ds
+cM_0^2\int_0^t\|\nabla(u^{j+1},b^{j+1})(s)\|^2_{H^3}ds\right.\nonumber  \\
&\quad\left. +cM_0\int_0^t(\|\nabla\mathscr{A}^{j+1}(s)\|^2_{H^2}+\|\nabla^2u^{j+1}(s)\|^2_{H^2}+\|w^{j+1}(s)\|^2_{H^3})ds\right],
\end{align*}
for any $0\leq t\leq T_1$. From above,   we can take suitable small $\varepsilon_1>0$, $T_1>0$  and $M_1>0$ such that
\begin{equation}\label{3.70}
\|\mathscr{A}^{j+1}(t)\|^2_{H^3}+\lambda' \int_0^t\|\nabla(\nabla\Phi^{j+1}, u^{j+1}, w^{j+1},b^{j+1})(s)\|^2_{H^3}
+\|(\varrho^{j+1},w^{j+1})(s)\|^2_{H^3}ds\leq M_0^2,
\end{equation}
for any $0\leq t\leq T_1$ and $\lambda'\in(0,\lambda]$ be a constant. This implies that (\ref{3.065}) holds for $j+1$ if so for $j$.
Hence (\ref{3.065}) is proved for all $j\geq 0$.

Next, from (\ref{3.69}) and the equivalence of $\tilde{\mathscr{A}}^{j+1}(t)$ and $\mathscr{A}^{j+1}(t)$, we can see that
\begin{align*}
&\left|\|\mathscr{A}^{j+1}(t)\|^2_{H^3}-\|\mathscr{A}^{j+1}(s)\|^2_{H^3}\right|
=\left|\int_s^t\frac{d}{d\tau}\|\mathscr{A}^{j+1}(\tau)\|^2_{H^3}d\tau\right|  \\
& \leq c\int_s^t\|\mathscr{A}^{j}(\tau)\|_{H^3}\left(\|\nabla(\nabla\Phi^{j+1}, u^{j+1}, w^{j+1},b^{j+1})(\tau)\|^2_{H^3}
+\|(\nabla\Phi^{j+1},\varrho^{j+1},w^{j+1})(\tau)\|^2_{H^3}\right)d\tau\\
&\leq cM_0\int_s^t\|\nabla(\nabla\Phi^{j+1}, u^{j+1}, w^{j+1},b^{j+1})(\tau)\|^2_{H^3}
 +\|(\nabla\Phi^{j+1},\varrho^{j+1},w^{j+1})(\tau)\|^2_{H^3}d\tau,
\end{align*}
for any $0\leq s\leq t\leq T_1.$ Here, the time integral in the last inequality is finite due to (\ref{3.70}), and hence $\|\mathscr{A}^{j+1}(t)\|^2_{H^3}$ is
continuous in $t$ for each $j\geq 0$.

In order to consider the convergence of the sequence $(\mathscr{A}^j)_{j\geq 0}$, by taking the difference of (\ref{3.143}) for each $j$ and $j-1$,
we arrive at
\begin{align}\label{3.71}
\begin{cases}
\partial_t(\varrho^{j+1}-\varrho^{j})+\textrm{div}~(u^{j+1}-u^{j})=M_1^{j+1}-M_1^{j},& \\
\partial_t(u^{j+1}-u^{j})+\nabla(\varrho^{j+1}-\varrho^{j})-\Delta(u^{j+1}-u^{j})-\nabla \textrm{div}~(u^{j+1}-u^{j})&\\
- \nabla\times (w^{j+1}-w^{j})-\nabla(\Phi^{j+1}-\Phi^{j})=M_2^{j+1}-M_2^{j},&\\
\partial_t(w^{j+1}-w^{j})+2 (w^{j+1}-w^{j})-\Delta (w^{j+1}-w^{j})-2\nabla \textrm{div}~(w^{j+1}-w^{j})&\\
-\nabla \times(u^{j+1}-u^{j})=M_3^{j+1}-M_3^{j},&\\
\partial_t(b^{j+1}-b^{j})-\Delta (b^{j+1}-b^{j})= M_4^{j+1}-M_4^{j},&\\
\Delta(\Phi^{j+1}-\Phi^{j})=\varrho^{j+1}-\varrho^{j},&\\
\textrm{div}~(b^{j+1}-b^j)=0,\quad t>0, x\in \mathbb{R}^3,
\end{cases}
\end{align}
where the nonlinear terms $M_i^{j+1}-M_i^j$ $(i=1,2,3,4)$ are defined as
\begin{align*}
M_1^{j+1}-M_1^j=&-[\varrho^{j}\textrm{div}~( u^{j+1}-u^j)+(\varrho^j-\varrho^{j-1})\textrm{div}~u^j+u^j\cdot\nabla(\varrho^{j+1}-\varrho^j)
+(u^j-u^{j-1})\cdot\nabla\varrho^j],\\
M_2^{j+1}-M_2^j=&-[u^j\cdot \nabla( u^{j+1}-u^j)+( u^{j}-u^{j-1})\cdot\nabla u^j]
-f(\varrho^j)[\Delta( u^{j+1}-u^j)+\nabla \textrm{div}~( u^{j+1}-u^j)\\
&+ \nabla \times ( w^{j+1}-w^j)]
-[f(\varrho^j)-f(\varrho^{j-1})](\Delta u^j+\nabla \textrm{div}~ u^j+\nabla\times w^j)
-h(\varrho^j)\nabla(\varrho^{j+1}-\varrho^{j})\\
&-(h(\varrho^j)-h(\varrho^{j-1}))\nabla \varrho^j
+g(\varrho^j)[\nabla\times (b^{j+1}-b^j)\times b^j+\nabla\times b^j\times(b^j-b^{j-1})]\\
&+(g(\varrho^j)-g(\varrho^{j-1}))\nabla\times b^j\times b^{j-1},\\
M_3^{j+1}-M_3^j=&-u^j\cdot \nabla (w^{j+1}-w^j)-(u^j-u^{j-1})\cdot\nabla w^j\\
&-f(\varrho^j)[\Delta (w^{j+1}-w^j)+\nabla \textrm{div}~(w^{j+1}-w^j)-2(w^{j+1}-w^j)+\nabla \times ( u^{j+1}-u^j)]\\
&-(f(\varrho^j)-f(\varrho^{j-1}))[\Delta w^{j}+\nabla \textrm{div}~w^{j}-2 w^{j}+\nabla \times u^{j}],\\
M_4^{j+1}-M_4^j=&b^j\cdot \nabla (u^{j+1}-u^j)+(b^j-b^{j-1})\cdot\nabla u^j-u^j\cdot \nabla (b^{j+1}-b^j)\\
&-(u^j-u^{j-1})\cdot\nabla b^j-b^j\textrm{div}(u^{j+1}-u^j)-(b^{j}-b^{j-1})\textrm{div}~ u^j,
\end{align*}
By using the same energy estimates as before, we infer that
\begin{align*}
\frac{d}{dt}&\|\mathscr{A}^{j+1}-\mathscr{A}^j\|^2_{H^3}
+\lambda''\|\nabla(\nabla(\Phi^{j+1}-\Phi^j),u^{j+1}-u^j,w^{j+1}-w^j,b^{j+1}-b^j)\|^2_{H^3}\\
&+\lambda''\|(\varrho^{j+1}-\varrho^j,w^{j+1}-w^j)\|^2_{H^3}\\
\leq & c\|\mathscr{A}^j-\mathscr{A}^{j-1}\|^2_{H^3}\|(\nabla^2\Phi^j,\varrho^j,\nabla u^j,\nabla w^j,\nabla b^j)\|_{H^3}
+c\|(\varrho^j,u^j)\|_{H^3}\|\nabla(\Phi^{j+1}-\Phi^{j})\|^2_{H^3}\\
&+c\|\mathscr{A}^j\|_{H^3}\|(\nabla^2(\Phi^{j+1}-\Phi^j),\varrho^{j+1}-\varrho^j,\nabla(u^{j+1}-u^j),\nabla(w^{j+1}-w^j),\nabla(b^{j+1}-b^j))\|^2_{H^3}
\end{align*}
with $\lambda''\in (0,1)$ be a constant. Taking into account (\ref{3.70}), the further time integration gives
\begin{align*}
&\|\mathscr{A}^{j+1}(t)-\mathscr{A}^j(t)\|^2_{H^3}
+\lambda''\int_0^t\|\nabla(\nabla(\Phi^{j+1}-\Phi^j),u^{j+1}-u^j,w^{j+1}-w^j,b^{j+1}-b^j)(s)\|^2_{H^3}ds\\
&+\lambda''\int_0^t\|(\varrho^{j+1}-\varrho^j,w^{j+1}-w^j)(s)\|^2_{H^3}ds
\leq e^{ cM_0t}\left[cM_0\sup_{0\leq s\leq T_1}\|\mathscr{A}^j(s)-\mathscr{A}^{j-1}(s)\|^2_{H^3}\right.\\
&\left.+cM_0\int_0^t
\|(\nabla^2(\Phi^{j+1}-\Phi^j),\varrho^{j+1}-\varrho^j,\nabla(u^{j+1}-u^j),\nabla(w^{j+1}-w^j),\nabla(b^{j+1}-b^j))(s)\|^2_{H^3}ds\right],
\end{align*}
for any $0\leq t\leq T_1.$ By the smallness of $M_0$ and $T_1$, then there exists a constant $\beta\in (0,1)$ such that
\begin{equation}\label{3.72}
\sup_{0\leq t\leq T_1}\|\mathscr{A}^{j+1}(t)-\mathscr{A}^{j}(t)\|^2_{H^3}
\leq \beta\sup_{0\leq t\leq T_1}\|\mathscr{A}^j(t)-\mathscr{A}^{j-1}(t)\|^2_{H^3}
\end{equation}
for any $j\geq 1$. From the previous inequality, we conclude that $(\mathscr{A}^j)_{j\geq 0}$ is a Cauchy sequence in the Banach space
$C([0,T_1];H^3)$. Thus, the limit function
\begin{equation*}
\mathscr{A}=\mathscr{A}^0+\lim_{n\longrightarrow\infty}\sum_{j=0}^n(\mathscr{A}^{j+1}-\mathscr{A}^{j})
\end{equation*}
indeed exists in $C([0,T_1];H^3)$ and satisfies
\begin{equation*}
\sup_{0\leq t\leq T_1}\|\mathscr{A}(t)\|_{H^3}\leq \sup_{0\leq t\leq T_1}\|\mathscr{A}^{j}(t)\|_{H^3}\leq M_0,
\end{equation*}
that is (\ref{3.66}).

Finally, suppose that $\mathscr{A}(t)$ and $\tilde{\mathscr{A}}(t)$ are two solutions in $C([0,T_1];H^3)$ satisfying (\ref{3.66}). By applying the
same process as in (\ref{3.72}), we deduce that
\begin{equation*}
\sup_{0\leq t\leq T_1}\|\mathscr{A}(t)-\tilde{\mathscr{A}}(t)\|^2_{H^3}
\leq \beta_1\sup_{0\leq t\leq T_1}\|\mathscr{A}(t)-\tilde{\mathscr{A}}(t)\|^2_{H^3},
\end{equation*}
for $\beta_1\in (0,1)$ be a constant, which implies $\mathscr{A}(t)=\tilde{\mathscr{A}}(t)$ holds. This proves the uniqueness and thus completes the
proof of Theorem \ref{th3.1}.
\end{proof}
\begin{Remark}\label{re3.1}
With a few  modifications to Lemma \ref{le3.3}-\ref{le3.4}, we can deduce the local existence of $\mathscr{A}(t)$ in $H^N$
norm $(N\geq 4)$ without the assumption that $\|\mathscr{A}_0\|_{H^N}$ small enough. In fact, by  re-estimating Lemma \ref{le3.3}-\ref{le3.4} carefully,
we can see that
\begin{align*}
\frac{1}{2} &\frac{d}{dt}\sum_{l=4}^N\|\nabla^l\mathscr{A}^{j+1}(t)\|_{L^2}^2
+\sum_{l=4}^N\|\nabla^{l+1}(\sqrt{\frac23} u^{j+1},\sqrt{3} w^{j+1}, b^{j+1} )\|_{L^2}^2
+\frac12\sum_{l=4}^N\|\nabla^lw^{j+1}\|_{L^2}^2 \nonumber\\
\leq& c\|(\varrho^j,u^j,b^j)\|^{\beta_1}_{H^3}\|(\varrho^j,u^j,b^j)\|^{1-\beta_1}_{H^N}
\sum_{l=4}^N\|(\nabla^{l+2}\Phi^{j+1},\nabla^{l}\varrho^{j+1},\nabla^{l+1} u^{j+1},\nabla^{l+1} w^{j+1},\nabla^{l+1} b^{j+1})\|_{L^2}^2\nonumber\\
&+c\|\varrho^j\|^{\beta_2}_{H^3}\|\varrho^j\|^{1-\beta_2}_{H^N}\|b^j\|^{\beta_3}_{H^3}\|b^j\|^{1-\beta_3}_{H^N}
\sum_{l=4}^N\|\nabla^{l+1}(u^{j+1},b^{j+1})\|_{L^2}^2+c\|\varrho^j\|^{\beta_4}_{H^3}\|\varrho^j\|^{1-\beta_4}_{H^N}
\sum_{l=4}^N\|\nabla^{l}w^{j+1}\|_{L^2}^2,
\end{align*}
and
\begin{align*}
\frac{d}{dt}&\sum_{l=0}^{N-1}\int\nabla^lu^{j+1}\cdot\nabla^{l+1}\varrho^{j+1}dx
+\frac{1}{4}\|(\varrho^{j+1},\nabla\varrho^{j+1},\nabla^2\Phi^{j+1},\nabla^3\Phi^{j+1})\|^2_{H^{N-1}} \nonumber\\
\leq&\|\nabla u^{j+1}\|^2_{H^{N-1}}+2\|\nabla w^{j+1}\|^2_{H^{N-1}}+4\|\nabla^2 u^{j+1}\|^2_{H^{N-1}}
+c\|\varrho^j\|^{\beta_5}_{H^3}\|\varrho^j\|^{1-\beta_5}_{H^N}\|b^j\|^{\beta_6}_{H^3}\|b^j\|^{1-\beta_6}_{H^N}
\|\nabla(\varrho^{j+1},b^{j+1})\|^2_{H^{N-1}}\nonumber\\
&+c\|(\varrho^j,u^j,b^j)\|^{\beta_7}_{H^3}\|(\varrho^j,u^j,b^j)\|^{1-\beta_7}_{H^N}
(\|\nabla(\varrho^{j+1},u^{j+1},w^{j+1},b^{j+1})\|^2_{H^{N-1}}+\|\nabla^2u^{j+1}\|^2_{H^{N-1}}),
\end{align*}
where $\beta_i\in (0,1)$, $i=1,\cdots,7$. For any $M>0$ be fixed. Suppose that $\|\mathscr{A}_0\|_{H^N}\leq M$, then applying the smallness of $M_0$
in Theorem \ref{th3.1}, by the same process as the proof of Theorem \ref{th3.1}, one can verify that $\|\mathscr{A}^j(t)\|_{H^N}\leq M$, for any
$t\in [0,T_1]$ and $T_1$ was determined in Theorem \ref{th3.1}. Furthermore, we can also conclude that the limit function of $\mathscr{A}^j(t)$ is indeed
the solution over $[0,T_1]$ to (\ref{1.3})-(\ref{1.4}).
\end{Remark}
\section{Energy evolution of negative Sobolev or Besov norms}\label{se4}
In this section, we shall show the evolution of the negative Sobolev and Besov norms of the solution $(\nabla\Phi,\varrho,u,w,b)$ to
(\ref{1.3})-(\ref{1.4}). In order to estimate the nonlinear terms, we shall restrict ourselves to that $s\in (0,\frac32]$. Firstly, for
the homogeneous Sobolev space, we have the following result.
\begin{Lemma}\label{le4.1}
Let  all assumptions in Lemma \ref{le3.1} are in force. Then, for $s\in (0,\frac{1}{2}]$, we have
\begin{align}\label{4.1}
&\frac{1}{2}\frac{d}{dt}\|(\nabla\Phi,\varrho,u,w,b)\|_{\dot{H}^{-s}}^2+\frac23\|\nabla u\|_{\dot{H}^{-s}}^2+3\|\nabla w\|_{\dot{H}^{-s}}^2
+\frac12\|w\|_{\dot{H}^{-s}}^2+\|\nabla b\|_{\dot{H}^{-s}}^2\nonumber\\
&\lesssim(\|\nabla(\varrho,u,w,b)\|_{H^1}^2+\|(\varrho,w)\|_{L^2}^2)\|(\nabla\Phi,\varrho,u,w,b)\|_{\dot{H}^{-s}},
\end{align}
and for $s\in (\frac{1}{2},\frac{3}{2})$, we have
\begin{align}\label{4.2}
&\frac{1}{2}\frac{d}{dt}\|(\nabla\Phi,\varrho,u,w,b)\|_{\dot{H}^{-s}}^2+\frac23\|\nabla u\|_{\dot{H}^{-s}}^2+3\|\nabla w\|_{\dot{H}^{-s}}^2
+\frac12\|w\|_{\dot{H}^{-s}}^2+\|\nabla b\|_{\dot{H}^{-s}}^2\nonumber\\
&\lesssim\|(\varrho,u,w,b)\|^{s-\frac12}_{L^2}\|\nabla(\varrho,u,w,b)\|^{\frac32-s}_{L^2}
\|(\nabla\varrho,\nabla u,\nabla w,\nabla b,\varrho,w,\nabla^2u,\nabla^2w)\|_{L^2}\|(\nabla\Phi,\varrho,u,w,b)\|_{\dot{H}^{-s}}.
\end{align}
\end{Lemma}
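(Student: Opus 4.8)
The plan is to obtain \eqref{4.1}--\eqref{4.2} by applying $\Lambda^{-s}$ to \eqref{1.3}$_1$--\eqref{1.3}$_4$, pairing the resulting equations in $L^2$ with $\Lambda^{-s}\varrho$, $\Lambda^{-s}u$, $\Lambda^{-s}w$, $\Lambda^{-s}b$ respectively, and adding. Since $\Lambda^{-s}$ commutes with all the constant-coefficient operators, the linear part is handled exactly as in Lemma \ref{le3.1}: the dissipative operators $-\Delta u-\nabla\,\mathrm{div}\,u$, $2w-\Delta w-2\nabla\,\mathrm{div}\,w$ and $-\Delta b$ produce the dissipation in $\dot{H}^{-s}$; the first-order couplings $\nabla\varrho$ and $\mathrm{div}\,u$ across the mass and momentum equations cancel after integration by parts; the electric term $-\nabla\Phi$ paired with $\Lambda^{-s}u$ is rewritten, exactly as in \eqref{3.01}, by integrating by parts, using $\mathrm{div}\,u=-\partial_t\varrho+M_1$ from the continuity equation and then $\varrho=\Delta\Phi$ from the Poisson equation, into $\tfrac12\tfrac{d}{dt}\|\Lambda^{-s}\nabla\Phi\|_{L^2}^2+\int\Lambda^{-s}\nabla\Phi\cdot\Lambda^{-s}(\varrho u)\,dx$, the last term being a genuinely nonlinear remainder; and the antisymmetric couplings $-\nabla\times w$, $-\nabla\times u$ combine into $2\int\Lambda^{-s}(\nabla\times u)\cdot\Lambda^{-s}w\,dx$, which via $|\nabla\times u|^2\le 2|\nabla u|^2$ and Young's inequality as in \eqref{3.040} is absorbed into the dissipation, leaving precisely the coefficients displayed on the left of \eqref{4.1}--\eqref{4.2}.

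It then remains to bound the nonlinear terms $\int\Lambda^{-s}M_i\cdot\Lambda^{-s}(\,\cdot\,)\,dx$ and the Poisson remainder $\int\Lambda^{-s}\nabla\Phi\cdot\Lambda^{-s}(\varrho u)\,dx$. After expanding $\mathrm{div}(\varrho u)=\varrho\,\mathrm{div}\,u+u\cdot\nabla\varrho$, each such term is $\le\|\Lambda^{-s}(fg)\|_{L^2}\,\|(\nabla\Phi,\varrho,u,w,b)\|_{\dot{H}^{-s}}$ for an appropriate quadratic expression $fg$, and I would estimate $\|\Lambda^{-s}(fg)\|_{L^2}$ by the Hardy--Littlewood--Sobolev inequality (Lemma \ref{le2.4}) with $p=\tfrac{6}{3+2s}\in(1,2)$, where the restriction $s<\tfrac32$ is used (the endpoint $s=\tfrac32$ being covered by the Besov version, Lemma \ref{le2.5}): $\|\Lambda^{-s}(fg)\|_{L^2}\lesssim\|fg\|_{L^p}\le\|f\|_{L^2}\|g\|_{L^{3/s}}$, with $f$ chosen as the factor carrying the top-order derivative (such as $\Delta u$, $\nabla\,\mathrm{div}\,u$, $\nabla\varrho$, $\nabla b$, or the zeroth-order $w$), and with $f(\varrho)$, $g(\varrho)-1$, $h(\varrho)$ bounded in $L^{3/s}$ by $c\|\varrho\|_{L^{3/s}}$ using $f(0)=h(0)=0$ and the smallness of $\|\varrho\|_{H^3}$. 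The factor $\|g\|_{L^{3/s}}$ is then controlled by Gagliardo--Nirenberg (Lemma \ref{le2.1}): when $s\in(0,\tfrac12]$, $\tfrac3s\ge 6$ and $\|g\|_{L^{3/s}}\lesssim\|\nabla g\|_{L^2}^{\frac12+s}\|\nabla^2 g\|_{L^2}^{\frac12-s}\le\|\nabla g\|_{H^1}$, which yields \eqref{4.1}; when $s\in(\tfrac12,\tfrac32)$, $\tfrac3s\in(2,6)$ and $\|g\|_{L^{3/s}}\lesssim\|g\|_{L^2}^{s-\frac12}\|\nabla g\|_{L^2}^{\frac32-s}$, which produces the interpolation factor $\|(\varrho,u,w,b)\|_{L^2}^{s-\frac12}\|\nabla(\varrho,u,w,b)\|_{L^2}^{\frac32-s}$ appearing in \eqref{4.2}. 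Summing over the nine terms of $M_2$, $M_3$ together with $M_1$, $M_4$ and the Poisson remainder, and applying Young's inequality to factor out the common $\dot{H}^{-s}$ norm, gives the claim.

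The routine part is the mechanical pass through all the quadratic terms, each of the schematic type ``low-order factor $\times$ high-order factor'' and each fitting one of the two templates above. The main obstacle is, on the linear side, keeping careful track of the algebra so that after combining the $\nabla\times$ coupling across the $u$- and $w$-equations together with the Poisson/continuity manipulation, exactly the stated dissipative coefficients survive on the left. A secondary point, which is precisely why the lemma splits into two regimes, is that the nonlinear estimate genuinely bifurcates at $s=\tfrac12$: for $\tfrac3s\ge6$ an exponent pair makes $\|g\|_{L^{3/s}}$ controllable by $H^1$ of the gradient, whereas for $2<\tfrac3s<6$ one is forced to interpolate only between $L^2$ and $\dot{H}^1$, which is exactly why \eqref{4.2} carries the extra $\|\cdot\|_{L^2}^{s-1/2}\|\nabla\cdot\|_{L^2}^{3/2-s}$ factor, and why the Hardy--Littlewood--Sobolev step caps $s$ at $\tfrac32$.
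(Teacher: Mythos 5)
Your proposal is correct and follows essentially the same route as the paper: take the $\Lambda^{-s}$ energy estimate of \eqref{1.3}$_1$--\eqref{1.3}$_4$, handle the Poisson term via the continuity equation and $\varrho=\Delta\Phi$ as in \eqref{3.01}, absorb the antisymmetric $\nabla\times$ coupling by Young's inequality exactly as in \eqref{3.040}, and then bound each quadratic nonlinearity via Hardy--Littlewood--Sobolev followed by a H\"older split $L^2\times L^{3/s}$ and Gagliardo--Nirenberg on the $L^{3/s}$ factor, with the two interpolation templates bifurcating at $s=\tfrac12$ exactly as in \eqref{4.4}--\eqref{4.17} versus \eqref{4.21}--\eqref{4.34}.
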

\begin{proof}
By the $\Lambda^{-s}$ $(s>0)$ energy estimate of \eqref{1.3}$_1$-\eqref{1.3}$_4$, we obtain
\begin{align}\label{4.3}
\frac{1}{2}&\frac{d}{dt}\|(\varrho,u,w,b)\|_{\dot{H}^{-s}}^2+2\|\nabla u\|_{\dot{H}^{-s}}^2+3\|\nabla w\|_{\dot{H}^{-s}}^2
+2\|w\|_{\dot{H}^{-s}}^2+\|\nabla b\|_{\dot{H}^{-s}}^2\nonumber\\
=&\int \Lambda^{-s}[-\textrm{div}~u-\textrm{div}~(\varrho u)]\cdot \Lambda^{-s}\varrho dx
+\int \Lambda^{-s}\Big\{\nabla\times w-\nabla \varrho-u\cdot \nabla u-f(\varrho)\big[\Delta u+\nabla\textrm{div}~u+\nabla\times w\big]\nonumber\\
&-h(\varrho)\nabla \varrho+g(\varrho)\big[b\cdot \nabla b-\frac{1}{2}\nabla(|b|^2)\big]+\nabla\Phi\Big\}\cdot \Lambda^{-s}udx\nonumber\\
&+\int\Lambda^{-s}\Big\{\nabla\times u-u\cdot \nabla w
-f(\varrho)\big[\Delta w+2\nabla\textrm{div}~w+\nabla\times u-2 w\big]\Big\}\cdot \Lambda^{-s}w dx\nonumber\\
&+\int\Lambda^{-s}\big[(b\cdot \nabla)u-(u\cdot \nabla)b-b(\textrm{div}~u)\big]\cdot \Lambda^{-s}b dx\nonumber\\
\leq&\|\textrm{div}~(\varrho u)\|_{\dot{H}^{-s}}\|\varrho\|_{\dot{H}^{-s}}+\|-u\cdot\nabla u
-f(\varrho)[\Delta u+\nabla\textrm{div}~u+\nabla\times w]-h(\varrho)\nabla\varrho\|_{\dot{H}^{-s}}\|u\|_{\dot{H}^{-s}}\nonumber\\
&+\|g(\varrho)[b\cdot\nabla b-\frac{1}{2}\nabla(|b|^2)]\|_{\dot{H}^{-s}}\|u\|_{\dot{H}^{-s}}+\int\Lambda^{-s}\nabla\Phi\cdot\Lambda^{-s}udx\nonumber\\
&+\|-u\cdot \nabla w-f(\varrho)[\Delta w+2\nabla\textrm{div}~w+\nabla\times u-2w]\|_{\dot{H}^{-s}}\cdot\| w\|_{\dot{H}^{-s}}\nonumber\\
&+\|(b\cdot \nabla)u-(u\cdot \nabla)b-b(\textrm{div}~u)\|_{\dot{H}^{-s}}\| b\|_{\dot{H}^{-s}}
+\int\Lambda^{-s}\nabla\times w\cdot\Lambda^{-s}udx+\int\Lambda^{-s}\nabla\times u\cdot\Lambda^{-s}wdx,
\end{align}
where in the last inequality, we have taken into account that
\begin{equation*}
\int\Lambda^{-s}(-\textrm{div}~u)\cdot\Lambda^{-s}\varrho dx+\int\Lambda^{-s}(-\nabla\varrho)\cdot\Lambda^{-s}udx=0.
\end{equation*}
Now, we concentrate our attention on estimating  the terms on the right hand side of \eqref{4.3}, and we distinguish the argument by the value of $s$.
First, if $s\in (0,\frac{1}{2}]$, then $\frac{1}{2}+\frac{s}{3}<1$ and $\frac{3}{s} \geq 6$.

In virtue of \eqref{2.6}, Sobolev inequality,  the H\"{o}lder and Young's inequality, we obtain
\begin{align}\label{4.4}
\|\textrm{div}(\varrho u)\|_{\dot{H}^{-s}}
&\leq \|\varrho\textrm{div}~u\|_{\dot{H}^{-s}}+\|\nabla\varrho\cdot u\|_{\dot{H}^{-s}}\nonumber\\
&\lesssim \|\varrho\textrm{div}~u\|_{L^{\frac{1}{\frac12+\frac s3}}}+\|\nabla\varrho\cdot u\|_{L^{\frac{1}{\frac12+\frac s3}}}\nonumber\\
&\lesssim \|\varrho\|_{L^{\frac{3}{s}}}\|\nabla u\|_{L^2}+\| u\|_{L^{\frac{3}{s}}}\|\nabla\varrho\|_{L^2}\nonumber\\
&\lesssim\|\nabla\varrho\|_{L^2}^{\frac{1}{2}+s}\|\nabla^2\varrho\|_{L^2}^{\frac{1}{2}-s}\|\nabla u\|_{L^2}
+\|\nabla u\|_{L^2}^{\frac{1}{2}+s}\|\nabla^2 u\|_{L^2}^{\frac{1}{2}-s}\|\nabla\varrho\|_{L^2}\nonumber\\
&\lesssim\|\nabla\varrho\|_{H^1}^2+\|\nabla u\|^2_{H^1},
\end{align}
Similarly,  we further obtain
\begin{align}
\| u\cdot\nabla u\|_{\dot{H}^{-s}}&\lesssim \|\nabla u\|_{H^1}^2,\label{4.5}\\
\| u\cdot\nabla w\|_{\dot{H}^{-s}}&\lesssim \|\nabla w \|_{L^2}^2+\|\nabla u\|_{H^1}^2,\label{4.6}\\
\|b\cdot\nabla u\|_{\dot{H}^{-s}}&\lesssim \|\nabla u\|_{L^2}^2+\|\nabla b\|_{H^1}^2,\label{4.7}\\
\|u\cdot\nabla b\|_{\dot{H}^{-s}}&\lesssim\|\nabla b\|_{L^2}^2+\|\nabla u\|_{H^1}^2,\label{4.8}\\
\|b\textrm{div}~u\|_{\dot{H}^{-s}}&\lesssim \|\nabla u\|_{L^2}^2+\|\nabla b\|_{H^1}^2.\label{4.9}
\end{align}
In addition, using Lemma \ref{le2.3}-\ref{le2.4}, there holds
\begin{equation}\label{4.10}
\|f(\varrho)\Delta u\|_{\dot{H}^{-s}}+\| f(\varrho)\nabla\textrm{div}~u\|_{\dot{H}^{-s}}
\lesssim \|f(\varrho)\|_{L^{\frac{3}{s}}}\|\Delta u\|_{L^2}
\lesssim \|\nabla \varrho\|_{H^1}+\|\nabla u\|_{H^1}^2.
\end{equation}
By the same way, we infer that
\begin{align}
\|f(\varrho)\nabla\times w\|_{\dot{H}^{-s}}&\lesssim\|\nabla\varrho\|_{H^1}^2+\|\nabla w\|_{L^2}^2,\label{4.11}\\
\|f(\varrho)\nabla\times u\|_{\dot{H}^{-s}}&\lesssim\|\nabla u\|_{L^2}^2+\|\nabla \varrho\|_{H^1}^2,\label{4.12}\\
\|f(\varrho)w\|_{\dot{H}^{-s}}&\lesssim\|w\|_{L^2}^2+\|\nabla\varrho\|_{H^1}^2,\label{4.13}\\
\|f(\varrho)\Delta w\|_{\dot{H}^{-s}}+\|f(\varrho)\nabla\textrm{div}~w\|_{\dot{H}^{-s}}
&\lesssim\|\nabla\varrho\|_{H^1}^2+\|\nabla w\|_{H^1}^2,\label{4.14}\\
\|h(\varrho)\nabla\varrho\|_{\dot{H}^{-s}}&\lesssim \|\nabla \varrho\|_{H^1}^2,\label{4.15}
\end{align}
Next, note that $g(\varrho)\in (0,1)$, then we can see that
\begin{align}\label{4.16}
\|g(\varrho)(b\cdot\nabla b)\|_{\dot{H}^{-s}}&\lesssim\|g(\varrho)b\|_{L^{\frac3s}}\|\nabla b\|_{L^2}\nonumber\\
&\lesssim\|b\|_{L^{\frac3s}}\|\nabla b\|_{L^2}\lesssim\|\nabla b\|^2_{H^1}.
\end{align}
Likewise, there holds
\begin{equation}\label{4.17}
\|g(\varrho)\cdot\nabla(|b|^2)\|_{\dot{H}^{-s}}\lesssim \|\nabla b\|_{H^1}^2.
\end{equation}
Moreover, taking into account (\ref{3.040}), by the Young's inequality, we have
\begin{align}\label{4.18}
&\int\Lambda^{-s}\nabla\times w\cdot\Lambda^{-s}udx+\int\Lambda^{-s}\nabla\times u\cdot\Lambda^{-s}wdx\nonumber \\
&=2\int\Lambda^{-s}\nabla\times u\cdot\Lambda^{-s}wdx
\leq\frac43\|\Lambda^{-s}\nabla u\|_{L^2}^2+\frac32\|\Lambda^{-s}w\|_{L^2}^2.
\end{align}
Finally, for the Poisson term, by integration by parts, it holds that
\begin{align}\label{4.19}
-\int\Lambda^{-s}\nabla\Phi\cdot\Lambda^{-s}udx&=\int\Lambda^{-s}\Phi\Lambda^{-s}\textrm{div}~udx \nonumber \\
& =\int\Lambda^{-s}\Phi\Lambda^{-s}(-\partial_t\varrho-\textrm{div}~(\varrho u))dx \nonumber \\
& =\int-\Lambda^{-s}\Phi\Lambda^{-s}\partial_t\Delta\Phi+\Lambda^{-s}\nabla\Phi\cdot\Lambda^{-s}(\varrho u)dx \nonumber \\
&=\frac12\frac d{dt}\int|\Lambda^{-s}\nabla\Phi|^2dx+\int\Lambda^{-s}\nabla\Phi\cdot\Lambda^{-s}(\varrho u)dx.
\end{align}
If $s\in(0,\frac 12)$, using Lemma \ref{le2.1} and Lemma \ref{le2.4}, we infer that
\begin{equation}\label{4.20}
\|\Lambda^{-s}(\varrho u)\|_{L^2}\lesssim \|\varrho\|_{L^2}\|u\|_{L^{\frac 3s}}\lesssim \|\varrho\|^2_{L^2}+\|\nabla u\|^2_{H^1}.
\end{equation}
Thus, from (\ref{4.3})-(\ref{4.20}), it follows (\ref{4.1}) for $s\in(0,\frac12)$.

On the other hand, if $s\in (\frac{1}{2},\frac{3}{2})$, we have $\frac{1}{2}+\frac{s}{3}<1$, $2<\frac{3}{s}<6.$ Then, we obtain
\begin{align}\label{4.21}
\|\textrm{div}~(\varrho u)\|_{\dot{H}^{-s}}
&\leq\|\nabla\varrho\cdot u\|_{\dot{H}^{-s}}+\|\varrho\textrm{div}u\|_{\dot{H}^{-s}}\nonumber\\
&\leq \| u\|_{L^{\frac{3}{s}}}\|\nabla \varrho\|_{L^2}+\|\varrho\|_{L^{\frac{3}{s}}}\|\nabla u\|_{L^2}\nonumber\\
&\lesssim\|u\|_{L^2}^{s-\frac{1}{2}}\|\nabla u\|_{L^2}^{\frac{3}{2}-s}\|\nabla\varrho\|_{L^2}
+\|\varrho\|_{L^2}^{s-\frac{1}{2}}\|\nabla \varrho\|_{L^2}^{\frac{3}{2}-s}\|\nabla u\|_{L^2},
\end{align}
Likewise, we can also obtain
\begin{align}
\|u\cdot \nabla u\|_{\dot{H}^{-s}}&\lesssim\|u\|_{L^2}^{s-\frac{1}{2}}\|\nabla u \|_{L^2}^{\frac{3}{2}-s}\|\nabla u\|_{L^2},\label{4.22}\\
\|u\cdot \nabla w\|_{\dot{H}^{-s}}&\lesssim\|u\|_{L^2}^{s-\frac{1}{2}}\|\nabla u\|_{L^2}^{\frac{3}{2}-s}\|\nabla w\|_{L^2},\label{4.23}\\
\|(b\cdot\nabla)u\|_{\dot{H}^{-s}}&\lesssim\|b\|_{L^2}^{s-\frac{1}{2}}\|\nabla b\|_{L^2}^{\frac{3}{2}-s}\|\nabla u\|_{L^2},\label{4.24}\\
\|(u\cdot\nabla)b\|_{\dot{H}^{-s}}&\lesssim\|u\|_{L^2}^{s-\frac{1}{2}}\|\nabla u\|_{L^2}^{\frac{3}{2}-s}\|\nabla b\|_{L^2},\label{4.25}\\
\|b\textrm{div}~u\|_{\dot{H}^{-s}}&\lesssim\|b\|_{L^2}^{s-\frac{1}{2}}\|\nabla b\|_{L^2}^{\frac{3}{2}-s}\|\nabla u\|_{L^2}.\label{4.26}
\end{align}
Taking into account Lemma \ref{le2.3}, we further obtain
\begin{align}
\|f(\varrho)\Delta u\|_{\dot{H}^{-s}}+\|f(\varrho)\nabla\textrm{div}~u\|_{\dot{H}^{-s}}
&\lesssim \| \varrho\|_{L^2}^{s-\frac{1}{2}}\|\nabla \varrho\|_{L^2}^{\frac{3}{2}-s}\|\nabla^2 u\|_{L^2},\label{4.27}\\
\|f(\varrho)\Delta w\|_{\dot{H}^{-s}}+\|f(\varrho)\nabla\textrm{div}~w\|_{\dot{H}^{-s}}
&\lesssim \| \varrho\|_{L^2}^{s-\frac{1}{2}}\|\nabla\varrho\|_{L^2}^{\frac{3}{2}-s}\|\nabla^2 w\|_{L^2},\label{4.28}\\
\|f(\varrho)\nabla\times w\|_{\dot{H}^{-s}}+\|f(\varrho)\nabla \times u\|_{\dot{H}^{-s}}
&\lesssim\|\varrho\|_{L^2}^{s-\frac{1}{2}}\|\nabla \varrho\|_{L^2}^{\frac{3}{2}-s}(\|\nabla w\|_{L^2}+\|\nabla u\|_{L^2}),\label{4.29}\\
\|f(\varrho)w \|_{\dot{H}^{-s}}+\|h(\varrho)\nabla \varrho\|_{\dot{H}^{-s}}
&\lesssim\| \varrho\|_{L^2}^{s-\frac{1}{2}}\|\nabla \varrho\|_{L^2}^{\frac{3}{2}-s}(\| w\|_{L^2}+\|\nabla\varrho\|_{L^2}),\label{4.31}
\end{align}
In addition, similar to (\ref{4.16})-(\ref{4.17}), there holds
\begin{equation}\label{4.33}
\|g(\varrho)(b\cdot\nabla b)\|_{\dot{H}^{-s}}+ \|g(\varrho)\cdot\nabla(|b|^2)\|_{\dot{H}^{-s}}
\lesssim \|b\|_{L^2}^{s-\frac{1}{2}}\|\nabla b\|_{L^2}^{\frac{3}{2}-s}\|\nabla b\|_{L^2}.
\end{equation}
While for the right-most term in (\ref{4.19}), we can see that
\begin{equation}\label{4.34}
\|\Lambda^{-s}(\varrho u)\|_{L^2}\lesssim \|\varrho\|_{L^2}\|u\|_{L^{\frac 3s}}
\lesssim \|\varrho\|^2_{L^2}\| u\|^{s-\frac12}_{L^2}\|\nabla u\|^{\frac32-s}_{L^2}.
\end{equation}
Combining (\ref{4.3}), (\ref{4.18})-(\ref{4.19}) and (\ref{4.21})-(\ref{4.34}), we have (\ref{4.2}) for $s\in (\frac12,\frac32)$. This completes
the proof of Lemma \ref{le4.1}.
\end{proof}
When replace the homogeneous Sobolev space by the homogeneous Besov space. Now, we  proceed to derive the evolution of the negative Besov norms of the
solution $(\nabla\Phi,\varrho,u,w,b)$ to (\ref{1.3})-\eqref{1.4}.  Precisely, we have
\begin{Lemma}\label{le4.2}
Let all assumptions in Lemma \ref{le3.1} hold. Then, for $s\in (0,\frac{1}{2}]$, we have
\begin{align}\label{4.35}
&\frac{1}{2}\frac{d}{dt}\|(\nabla\Phi,\varrho,u,w,b)\|_{\dot{B}_{2,\infty}^{-s}}^2
+\frac23\|\nabla u\|_{\dot{B}_{2,\infty}^{-s}}^2+3\|\nabla w\|_{\dot{B}_{2,\infty}^{-s}}^2
+\frac12\|w\|_{\dot{B}_{2,\infty}^{-s}}^2+\|\nabla b\|_{\dot{B}_{2,\infty}^{-s}}^2\nonumber\\
&\lesssim(\|\nabla(\varrho,u,w,b)\|_{H^1}^2+\|(\varrho,w)\|_{L^2}^2)\|(\nabla\Phi,\varrho,u,w,b)\|_{\dot{B}_{2,\infty}^{-s}},
\end{align}
and for $s\in (\frac{1}{2},\frac{3}{2}]$, we have
\begin{align}\label{4.36}
&\frac{1}{2}\frac{d}{dt}\|(\nabla\Phi,\varrho,u,w,b)\|_{\dot{B}_{2,\infty}^{-s}}^2
+\frac23\|\nabla u\|_{\dot{B}_{2,\infty}^{-s}}^2+3\|\nabla w\|_{\dot{B}_{2,\infty}^{-s}}^2
+\frac12\| w\|_{\dot{B}_{2,\infty}^{-s}}^2+\|\nabla b\|_{\dot{B}_{2,\infty}^{-s}}^2\nonumber\\
&\lesssim\|(\varrho,u,w,b)\|^{s-\frac12}_{L^2}\|\nabla(\varrho,u,w,b)\|^{\frac32-s}_{L^2}
\|(\nabla\varrho,\nabla u,\nabla w,\nabla b,\varrho,w,\nabla^2u,\nabla^2w)\|_{L^2}\|(\nabla\Phi,\varrho,u,w,b)\|_{\dot{B}_{2,\infty}^{-s}}.
\end{align}
\end{Lemma}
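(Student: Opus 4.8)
\textbf{Proof proposal for Lemma \ref{le4.2}.}
The plan is to mirror the proof of Lemma \ref{le4.1} almost verbatim, replacing the $\Lambda^{-s}$ energy estimate by a $\dot{\Delta}_j$ (Littlewood–Paley block) energy estimate and then taking the supremum over $j\in\mathbb{Z}$ weighted by $2^{-js}$. Concretely, I apply $\dot{\Delta}_j$ to $\eqref{1.3}_1$–$\eqref{1.3}_4$, multiply the resulting equations by $\dot{\Delta}_j\varrho$, $\dot{\Delta}_ju$, $\dot{\Delta}_jw$, $\dot{\Delta}_jb$ respectively, integrate over $\mathbb{R}^3$ and sum. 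Using that $\dot{\Delta}_j$ commutes with derivatives and with $\Delta^{-1}$ (so the Poisson term is handled exactly as in \eqref{4.19}, via $-\int\dot{\Delta}_j\nabla\Phi\cdot\dot{\Delta}_ju\,dx=\tfrac12\tfrac{d}{dt}\|\dot{\Delta}_j\nabla\Phi\|_{L^2}^2+\int\dot{\Delta}_j\nabla\Phi\cdot\dot{\Delta}_j(\varrho u)\,dx$), and that the two skew terms recombine as in \eqref{4.18}, I obtain for each $j$
\begin{align*}
\frac12\frac{d}{dt}\|\dot{\Delta}_j(\nabla\Phi,\varrho,u,w,b)\|_{L^2}^2
&+\tfrac23\|\dot{\Delta}_j\nabla u\|_{L^2}^2+3\|\dot{\Delta}_j\nabla w\|_{L^2}^2
+\tfrac12\|\dot{\Delta}_jw\|_{L^2}^2+\|\dot{\Delta}_j\nabla b\|_{L^2}^2\\
&\lesssim \|\dot{\Delta}_j(\text{all nonlinear terms})\|_{L^2}\,
\|\dot{\Delta}_j(\nabla\Phi,\varrho,u,w,b)\|_{L^2},
\end{align*}
where the nonlinear terms are exactly those appearing on the right side of \eqref{4.3}. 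Multiplying by $2^{-2js}$, taking $\sup_{j\in\mathbb{Z}}$, and using $\sup_j a_jb_j\le(\sup_j a_j)(\sup_j b_j)$ converts every $\|\dot{\Delta}_j\cdot\|_{L^2}$ weighted factor into the corresponding $\dot{B}^{-s}_{2,\infty}$ norm.

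The heart of the matter is then to bound each nonlinear term in $\dot{B}^{-s}_{2,\infty}$ by the same right-hand sides that appeared in Lemma \ref{le4.1}. For this I replace Lemma \ref{le2.4} ($L^p\hookrightarrow\dot{H}^{-s}$) by Lemma \ref{le2.5} ($L^p\hookrightarrow\dot{B}^{-s}_{2,\infty}$, valid now up to the endpoint $s=\tfrac32$), together with the elementary embedding $\dot{H}^{-s}\hookrightarrow\dot{B}^{-s}_{2,\infty}$ when one prefers to quote the $\dot{H}^{-s}$ bounds directly. Thus, for $s\in(0,\tfrac12]$ one has $\tfrac12+\tfrac{s}{3}\le1$ and $\tfrac3s\ge6$, so exactly as in \eqref{4.4}–\eqref{4.17}, e.g. $\|\mathrm{div}(\varrho u)\|_{\dot{B}^{-s}_{2,\infty}}\lesssim\|\varrho u\|_{L^{1/(1/2+s/3)}}+\dots\lesssim\|\varrho\|_{L^{3/s}}\|\nabla u\|_{L^2}+\|u\|_{L^{3/s}}\|\nabla\varrho\|_{L^2}\lesssim\|\nabla\varrho\|_{H^1}^2+\|\nabla u\|_{H^1}^2$, using Lemma \ref{le2.1} to interpolate $\|\cdot\|_{L^{3/s}}$ between $\|\nabla\cdot\|_{L^2}$ and $\|\nabla^2\cdot\|_{L^2}$; the terms with $f(\varrho),h(\varrho),g(\varrho)$ use Lemma \ref{le2.3} in addition, and $g(\varrho)=1-f(\varrho)\in(0,1)$ gives the $b$-terms. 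This yields \eqref{4.35}. For $s\in(\tfrac12,\tfrac32]$ one has $2<\tfrac3s\le6$ (with equality allowed now at $s=\tfrac32$, where $\tfrac3s=2$ and Lemma \ref{le2.5} still applies at $p=1$), so the $L^{3/s}$ norms are interpolated as $\|u\|_{L^{3/s}}\lesssim\|u\|_{L^2}^{s-1/2}\|\nabla u\|_{L^2}^{3/2-s}$, reproducing \eqref{4.21}–\eqref{4.34} and hence \eqref{4.36}. The Poisson contribution $\int\dot{\Delta}_j\nabla\Phi\cdot\dot{\Delta}_j(\varrho u)\,dx\le\|\varrho u\|_{\dot{B}^{-s}_{2,\infty}}\|\nabla\Phi\|_{\dot{B}^{-s}_{2,\infty}}$ is absorbed by the same estimates \eqref{4.20}, \eqref{4.34} applied with $\dot{B}^{-s}_{2,\infty}$ in place of $\dot{H}^{-s}$.

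The one genuinely new point — and the main obstacle to watch — is the passage from the per-block estimate to the norm estimate: after multiplying by $2^{-2js}$ and summing in $j$ one does \emph{not} directly get a clean inequality, because $\sum_j 2^{-2js}\tfrac{d}{dt}\|\dot{\Delta}_j\cdot\|_{L^2}^2$ is not $\tfrac{d}{dt}$ of the $\dot{B}^{-s}_{2,\infty}$ norm (which is a $\sup$, not an $\ell^2$ sum). The standard fix, which I will follow, is to take the supremum over $j$ \emph{before} integrating in time: from the per-block differential inequality, Young's inequality absorbs the dissipative blocks $\|\dot{\Delta}_j\nabla u\|_{L^2}$ etc. on the left, leaving $\tfrac{d}{dt}\|\dot{\Delta}_j(\nabla\Phi,\varrho,u,w,b)\|_{L^2}^2\lesssim(\text{RHS factor})^2$ uniformly in $j$; multiplying by $2^{-2js}$ and taking $\sup_j$ of the time-integrated inequality gives the $\dot{B}^{-s}_{2,\infty}$ bound. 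Alternatively — and this is how the statement is phrased, as a differential inequality — one keeps the $\sup_j$ outside and notes that for the purposes of the subsequent Gronwall/interpolation argument in Section \ref{se5} it suffices to have, for each fixed $j$, the displayed inequality with the common nonlinear majorant; taking $\sup_j$ of both sides (legitimate since the majorant on the right is $j$-independent after weighting) produces \eqref{4.35}–\eqref{4.36} as stated. Apart from this bookkeeping, every individual nonlinear estimate is word-for-word that of Lemma \ref{le4.1}, so no further difficulty arises.
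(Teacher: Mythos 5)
Your proposal is essentially identical to the paper's proof: the paper applies $\dot{\Delta}_j$ to $\eqref{1.3}_1$--$\eqref{1.3}_4$, weights by $2^{-2sj}$, takes $\sup_{j\in\mathbb{Z}}$, and then states that the remaining nonlinear estimates are word-for-word those of Lemma~\ref{le4.1} with Lemma~\ref{le2.4} replaced by Lemma~\ref{le2.5} (so that $s=\tfrac32$ is now allowed) and $\dot{H}^{-s}$ replaced by $\dot{B}^{-s}_{2,\infty}$, and you do exactly this. Your closing paragraph about the interchange of $\sup_j$ and $\tfrac{d}{dt}$ flags a genuine point that the paper glosses over — the display~\eqref{4.37} formally writes $\tfrac{d}{dt}$ of the Besov norm, which does not commute with the supremum over blocks — and the fix you describe first (integrate the per-block differential inequality in time, then take $\sup_j$, as in Sohinger–Strain) is the standard and correct way to make this rigorous; your alternative phrasing (``taking $\sup_j$ of both sides of the differential inequality'') is not quite right as a differential statement, but the time-integrated version is all that is used in Section~\ref{se5}, so this does not affect the substance.
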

\begin{proof}
Applying $\dot{\Delta}_j$ energy estimate of \eqref{1.3}$_1$-\eqref{1.3}$_4$ with multiplication of $2^{-2sj}$ and then taking the supremum over
$j\in \mathbb{Z}$, we infer that
\begin{align}\label{4.37}
\frac{1}{2}&\frac{d}{dt}\|(\varrho,u,w,b)\|_{\dot{B}_{2,\infty}^{-s}}^2+2\|\nabla u\|_{\dot{B}_{2,\infty}^{-s}}^2
+3\|\nabla w\|_{\dot{B}_{2,\infty}^{-s}}^2+2\|w\|_{\dot{B}_{2,\infty}^{-s}}^2+\|\nabla b\|_{\dot{B}_{2,\infty}^{-s}}^2\nonumber\\
\lesssim&\sup_{j\in \mathbb{Z}}2^{-2sj}\int \dot{\Delta}_{j}[-\textrm{div}~u-\textrm{div}~(\varrho u)]\cdot \dot{\Delta}_{j}\varrho dx\nonumber\\
&+\sup_{j\in \mathbb{Z}}2^{-2sj}\int \dot{\Delta}_{j}\Big\{\nabla\times w-\nabla \varrho-u\cdot \nabla u
-f(\varrho)\big[\Delta u+\nabla\textrm{div}~u+\nabla\times w\big]\nonumber\\
&-h(\varrho)\nabla \varrho+g(\varrho)\big[b\cdot \nabla b-\frac{1}{2}\nabla(|b|^2)\big]+\nabla\Phi\Big\}\cdot \dot{\Delta}_{j}udx\nonumber\\
&+\sup_{j\in \mathbb{Z}}2^{-2sj}\int\dot{\Delta}_{j}\Big\{\nabla\times u-u\cdot \nabla w
-f(\varrho)\big[\Delta w+2\nabla\textrm{div}~w+\nabla\times u-2 w\big]\Big\}\cdot \dot{\Delta}_{j}w dx\nonumber\\
&+\sup_{j\in \mathbb{Z}}2^{-2sj}\int\dot{\Delta}_{j}\big[(b\cdot \nabla)u-(u\cdot \nabla)b-b(\textrm{div}~u)\big]\cdot \dot{\Delta}_{j}b dx\nonumber
\end{align}
\begin{align}
\leq&\|\textrm{div}~(\varrho u)\|_{\dot{B}_{2,\infty}^{-s}}\|\varrho\|_{\dot{B}_{2,\infty}^{-s}}
+\|-u\cdot\nabla u-f(\varrho)[\Delta u+\nabla\textrm{div}~u+\nabla\times w]-h(\varrho)\nabla\varrho\|_{\dot{B}_{2,\infty}^{-s}}
\|u\|_{\dot{B}_{2,\infty}^{-s}}\nonumber\\
&+\|g(\varrho)[b\cdot\nabla b-\frac{1}{2}\nabla(|b|^2)]\|_{\dot{B}_{2,\infty}^{-s}}\|u\|_{\dot{B}_{2,\infty}^{-s}}
+\sup_{j\in \mathbb{Z}}2^{-2sj}\int\dot{\Delta}_{j}\nabla\Phi\cdot\dot{\Delta}_{j}udx\nonumber\\
&+\|-u\cdot \nabla w-f(\varrho)[\Delta w+2\nabla\textrm{div}~w+\nabla\times u-2w]\|_{\dot{B}_{2,\infty}^{-s}}\|w\|_{\dot{B}_{2,\infty}^{-s}}\nonumber\\
&+\|(b\cdot \nabla)u-(u\cdot \nabla)b-b(\textrm{div}~u)\|_{\dot{B}_{2,\infty}^{-s}}\| b\|_{\dot{B}_{2,\infty}^{-s}}\nonumber\\
&+\sup_{j\in \mathbb{Z}}2^{-2sj}\int\dot{\Delta}_{j}\nabla\times w\cdot\dot{\Delta}_{j}udx
+\sup_{j\in \mathbb{Z}}2^{-2sj}\int\dot{\Delta}_{j}\nabla\times u\cdot\dot{\Delta}_{j}wdx,
\end{align}
According to  the Lemma \ref{le2.5} and \eqref{4.37}, the remaining proof of Lemma \ref{le4.2} is exactly same with the proof of Lemma \ref{le4.1},
except that we allow $s=\frac{3}{2}$ and replace Lemma \ref{le2.4} with Lemma \ref{le2.5}, $\dot{H}^{-s}$ norm by $\dot{B}_{2,\infty}^{-s}$ norm.
\end{proof}
\section{Proof of main theorem}\label{se5}
In this section, based on the assumption that $H^3$ norm of initial data is small,  we shall combine all energy estimates that we have derived in the
previous two sections to prove the global existence of $(\nabla\Phi,\varrho,u,w,b)$ to (\ref{1.3})-(\ref{1.4}).
\begin{proof}[Proof of Theorem \ref{th1.1}]
For simplicity, we divide the proof into several steps.

\textbf{Step 1}. Global small $\mathscr{E}_3$-solution.

We first close the energy estimates at the $H^3$-level by assuming a priori that $\sqrt{\mathscr{E}_3(t)}\leq\delta$ is sufficiently small. Thus, from
Lemma \ref{le3.1}, taking $k=0,1$ in (\ref{3.2}) and summing up, we deduce that for any $t\in [0,T]$
\begin{align}\label{5.1}
&\frac{1}{2}\frac{d}{dt}\sum_{l=0}^{3}\|\nabla^l(\nabla\Phi,\varrho,u,w,b)\|_{L^2}^{2}+\frac12\sum_{l=0}^{3}\|\nabla^lw\|_{L^2}^{2}
+\sum_{l=0}^{3}(\frac23\|\nabla^{l+1}u\|_{L^2}^{2}+3\|\nabla^{l+1}w\|_{L^2}^{2}+\|\nabla^{l+1}b\|_{L^2}^{2})\nonumber\\
&\leq c_3\delta\sum_{l=0}^{3}\left(\|\nabla^l(\varrho,w)\|_{L^2}^{2}+\|\nabla^{l+1}(\nabla\Phi,u,w,b)\|_{L^2}^{2}\right).
\end{align}
In addition, taking $k=0,1$ in (\ref{3.41}) of Lemma \ref{le3.2} and summing up, we obtain
\begin{align}\label{5.2}
&\frac{d}{dt}\sum_{l=0}^{2}\int \nabla^l u\cdot \nabla^{l+1}\varrho dx
+\frac{1}{4}\sum_{l=0}^{2}(\|\nabla^{l}\varrho\|_{L^2}^2+\|\nabla^{l+1}\varrho\|_{L^2}^2+\|\nabla^{l+1}\nabla\Phi\|_{L^2}^2+
\|\nabla^{l+2}\nabla\Phi\|_{L^2}^2)\nonumber\\
\leq&c_2\delta\sum_{l=0}^{2}(\|\nabla^{l+1}\varrho\|_{L^2}^2+\|\nabla^{l+2}(w,b)\|_{L^2}^2)
+\sum_{l=0}^{2}(\|\nabla^{l+1}u\|_{L^2}^2+2\|\nabla^{l+1}w\|_{L^2}^2+4\|\nabla^{l+2}u\|_{L^2}^2).
\end{align}
Taking into account the smallness of $\delta$, by linear combination of (\ref{5.1}) and (\ref{5.2}), we deduce that there exists an instant energy
functional $\tilde{\mathscr{E}}_3$ is equivalent to $\mathscr{E}_3$ such that
\begin{equation}\label{5.3}
\tilde{\mathscr{E}}_3(t)+\int_0^t\mathscr{D}_3(s)ds\leq \tilde{c}_3 \tilde{\mathscr{E}}_3(0), \quad \forall t\in [0,T].
\end{equation}
In what follows, we denote $\tilde{\mathscr{E}}_3(t)$ by $\mathscr{E}_3(t)$ due to the equivalence of $\tilde{\mathscr{E}}_3(t)$ and $\mathscr{E}_3(t)$.
Now, choose a positive constant
\begin{equation*}
\varepsilon_0:=\min\left\{\delta,\varepsilon_1\right\},
\end{equation*}
where $\delta$ and $\varepsilon_1$ are given in Lemma \ref{le3.1} and Theorem \ref{th3.1}, respectively. Further, choose initial data
$(\nabla\Phi(0),\varrho_0,u_0,w_0,b_0)$ and small constant $\delta_0$ such that
\begin{equation*}
\sqrt{\mathscr{E}_3(0)}\leq\sqrt{\delta_0}:=\frac{\varepsilon_0}{2\sqrt{1+\tilde{c}_3}}.
\end{equation*}
Define the lifespan to Cauchy problem (\ref{1.3})-(\ref{1.4}) by
\begin{equation*}
T:=\sup\left\{t:\ \sup_{0\leq s\leq t}\sqrt{\mathscr{E}_3(s)}\leq\varepsilon_0\right\}.
\end{equation*}
Since
\begin{equation*}
\sqrt{\mathscr{E}_3(0)}\leq \frac{\varepsilon_0}{2\sqrt{1+\tilde{c}_3}}\leq\frac{\varepsilon_0}{2}<\varepsilon_0\leq\varepsilon_1,
\end{equation*}
then $T>0$ holds true from the local existence result Theorem \ref{th3.1} and the continuation argument. If $T$ is finite, as a consequence, from the
definition of $T$, it follows that
\begin{equation*}
\sup_{0\leq s\leq T}\sqrt{\mathscr{E}_3(s)}=\varepsilon_0,
\end{equation*}
which is a contradiction to the fact from uniform a priori that
\begin{equation*}
\sqrt{\mathscr{E}_3(s)}\leq\sqrt{\tilde{c}_3}\sqrt{\mathscr{E}_3(0)}\leq \frac{\varepsilon_0\sqrt{\tilde{c}_3}}{2\sqrt{1+\tilde{c}_3}}
\leq\frac{\varepsilon_0}{2}.
\end{equation*}
Therefore, $T=\infty$. This implies that the local solution $(\nabla\Phi,\varrho,u,w,b)$ obtained in Theorem \ref{th3.1} can be extent to infinite time.
Thus, the Cauchy problem (\ref{1.3})-(\ref{1.4}) admits a unique solution $(\nabla\Phi,\varrho,u,w,b)\in C([0,\infty];H^3)$. Finally, (\ref{1.11}) follows
from (\ref{5.3}). This proves the existence of unique global $\mathscr{E}_3$-solution.

\textbf{Step 2.} Global $\mathscr{E}_N$ solution.

From Remark \ref{re3.1} and the global existence of $\mathscr{E}_3$ solution, we can deduce  the global existence of $\mathscr{E}_N$ solution. Thus,
for $N\geq 4$, $t\in [0,\infty]$, applying Lemma \ref{le3.1} and taking $k=0,\cdots,N-2$, we infer that
\begin{align}\label{5.4}
&\frac{1}{2}\frac{d}{dt}\sum_{l=0}^{N}\|\nabla^l(\nabla\Phi,\varrho,u,w,b)\|_{L^2}^{2}+\frac12\sum_{l=0}^{N}\|\nabla^lw\|_{L^2}^{2}
+\sum_{l=0}^{N}(\frac23\|\nabla^{l+1}u\|_{L^2}^{2}+3\|\nabla^{l+1}w\|_{L^2}^{2}+\|\nabla^{l+1}b\|_{L^2}^{2})\nonumber\\
&\leq c_N\varepsilon_0\sum_{l=0}^{N}\left(\|\nabla^l(\varrho,w)\|_{L^2}^{2}+\|\nabla^{l+1}(\nabla\Phi,u,w,b)\|_{L^2}^{2}\right).
\end{align}
Furthermore, by Lemma \ref{le3.2} and  taking $k=0,\cdots,N-2$, we have
\begin{align}\label{5.5}
&\frac{d}{dt}\sum_{l=0}^{N-1}\int \nabla^l u\cdot \nabla^{l+1}\varrho dx
+\frac{1}{4}\sum_{l=0}^{N-1}(\|\nabla^{l}\varrho\|_{L^2}^2+\|\nabla^{l+1}\varrho\|_{L^2}^2+\|\nabla^{l+1}\nabla\Phi\|_{L^2}^2+
\|\nabla^{l+2}\nabla\Phi\|_{L^2}^2)\nonumber\\
\leq&c_{N-1}\varepsilon_0\sum_{l=0}^{N-1}(\|\nabla^{l+1}\varrho\|_{L^2}^2+\|\nabla^{l+2}(w,b)\|_{L^2}^2)
+\sum_{l=0}^{N-1}(\|\nabla^{l+1}u\|_{L^2}^2+2\|\nabla^{l+1}w\|_{L^2}^2+4\|\nabla^{l+2}u\|_{L^2}^2).
\end{align}
By linear combination of (\ref{5.4}) and (\ref{5.5}), we infer that there exists an instant energy functional $\tilde{\mathscr{E}}_N$ is equivalent to
$\mathscr{E}_N$, such that
\begin{equation}\label{5.04}
\frac{d}{dt}\tilde{\mathscr{E}}_N+\tilde{\lambda}\mathscr{D}_N(t)\leq 0,
\end{equation}
for some $\tilde{\lambda}\in (0,1)$. This implies (\ref{1.12}). Thus, we have completed the proof of Theorem \ref{th1.1}.
\end{proof}
According to the conclusion of Theorem \ref{th1.1}, Lemma \ref{le4.1}-\ref{le4.2}, now, we proceed to prove the various time decay rates of the
unique global solution to (\ref{1.3})-(\ref{1.4}).
\begin{proof}[Proof of Theorem \ref{th1.2}]
In what follows, for the convenience of presentations, we define a family of energy functionals and the corresponding dissipation rates as
\begin{equation}\label{5.05}
\mathscr{E}_k^{k+2}:=\sum_{l=k}^{k+2}\|\nabla^l(\nabla\Phi,\varrho,u,w,b)\|^2_{L^2},
\end{equation}
and
\begin{equation}\label{5.06}
\mathscr{D}_k^{k+2}:=\sum_{l=k}^{k+2}\|\nabla^l(\varrho,w)\|^2_{L^2}+\sum_{l=k}^{k+2}\|\nabla^{l+1}(\nabla\Phi,u,w,b)\|^2_{L^2},
\end{equation}
Taking into account Lemma \ref{le3.1}-\ref{le3.2} and Theorem \ref{th3.1}, we have that for $k=0,1,\cdots,N-2$,
\begin{align}\label{5.6}
&\frac{1}{2}\frac{d}{dt}\sum_{l=k}^{k+2}\|\nabla^l(\nabla\Phi,\varrho,u,w,b)\|_{L^2}^{2}+\frac12\sum_{l=k}^{k+2}\|\nabla^lw\|_{L^2}^{2}
+\sum_{l=k}^{k+2}(\frac23\|\nabla^{l+1}u\|_{L^2}^{2}+3\|\nabla^{l+1}w\|_{L^2}^{2}+\|\nabla^{l+1}b\|_{L^2}^{2})\nonumber\\
&\leq c_l\varepsilon_0\sum_{l=k}^{k+2}\left(\|\nabla^l(\varrho,w)\|_{L^2}^{2}+\|\nabla^{l+1}(\nabla\Phi,u,w,b)\|_{L^2}^{2}\right).
\end{align}
and
\begin{align}\label{5.7}
&\frac{d}{dt}\sum_{l=k}^{k+1}\int \nabla^l u\cdot \nabla^{l+1}\varrho dx
+\frac{1}{4}\sum_{l=k}^{k+1}(\|\nabla^{l}\varrho\|_{L^2}^2+\|\nabla^{l+1}\varrho\|_{L^2}^2+\|\nabla^{l+1}\nabla\Phi\|_{L^2}^2+
\|\nabla^{l+1}\nabla\Phi\|_{L^2}^2)\nonumber\\
\leq&c_{l}\varepsilon_0\sum_{l=0}^{N-1}(\|\nabla^{l+1}\varrho\|_{L^2}^2+\|\nabla^{l+2}(w,b)\|_{L^2}^2)
+\sum_{l=k}^{k+1}(\|\nabla^{l+1}u\|_{L^2}^2+2\|\nabla^{l+1}w\|_{L^2}^2+4\|\nabla^{l+2}u\|_{L^2}^2).
\end{align}
By linear combination of (\ref{5.6}) and (\ref{5.7}), since $\varepsilon_0$ is small, we deduce that there exists an instant energy functional
$\tilde{\mathscr{E}}_{k}^{k+2}$ is equivalent to $\mathscr{E}_{k}^{k+2}$ such that
\begin{equation}\label{5.8}
\frac{d}{dt}\tilde{\mathscr{E}}_{k}^{k+2}+\mathscr{D}_{k}^{k+2}\leq 0.
\end{equation}
Noting that
$\mathscr{D}_{k}^{k+2}$ is weaker than $\mathscr{E}_{k}^{k+2}$, which prevents the exponent decay of the solution. We need to bound the missing
terms in the energy, that is, $\|\nabla^k(\nabla\Phi,u,b)\|^2_{L^2}$ in terms of $\mathscr{E}_{k}^{k+2}$. From which, then we can derive the time decay
rate from (\ref{5.8}). For this aim, we need the Sobolev interpolation between the negative and positive Sobolev norms. From now on, we assume for the
moment that we have proved (\ref{1.13}) and (\ref{1.14}). Using Lemma \ref{le2.6} for $s>0$ and $k+s\geq0$, we have
\begin{equation}\label{5.9}
\|\nabla^k(\nabla\Phi,u,b)\|_{L^2}\leq \|(\nabla\Phi,u,b)\|_{\dot{H}^{-s}}^{\frac{1}{k+1+s}}
\|\nabla^{k+1}(\nabla\Phi,u,b)\|_{L^2}^{\frac{k+s}{k+1+s}}\leq c\|\nabla^{k+1}(\nabla\Phi,u,b)\|_{L^2}^{\frac{k+s}{k+1+s}}.
\end{equation}
Similarly, applying Lemma \ref{le2.7}, for  $s>0$ and $k+s\geq0$, we have
\begin{equation}\label{5.10}
\|\nabla^k(\nabla\Phi,u,b)\|_{L^2}\leq \|(\nabla\Phi,u,b)\|_{\dot{B}^{-s}_{2,\infty}}^{\frac{1}{k+1+s}}
\|\nabla^{k+1}(\nabla\Phi,u,b)\|_{L^2}^{\frac{k+s}{k+1+s}}\leq c\|\nabla^{k+1}(\nabla\Phi,u,b)\|_{L^2}^{\frac{k+s}{k+1+s}}.
\end{equation}
As a consequence, from (\ref{5.8})-(\ref{5.10}), it follows that
\begin{equation}\label{5.11}
\frac{d}{dt}\mathscr{E}_{k}^{k+2}+(\mathscr{E}_{k}^{k+2})^{1+\alpha}\leq0,
\end{equation}
where $\alpha=\frac{1}{k+s}$, $k=0,1,\cdots, N-2$. Solving this inequality directly, we are in a position to obtain
\begin{equation*}
\mathscr{E}_{k}^{k+2}(t)\leq\left\{(\mathscr{E}_{k}^{k+2}(0))^{-\alpha}+\alpha t\right\}^{-\frac{1}{\alpha}}=c_0(1+t)^{-(k+s)}.
\end{equation*}
This proves the optimal decay (\ref{1.15}). On the other hand, since $\varrho=\textrm{div}\nabla\Phi$, we deduce that
\begin{equation*}
\|\nabla^{k}\varrho(t)\|^2_{L^2}\leq \|\nabla^{k+1}\nabla\Phi\|^2_{L^2}\leq c_0(1+t)^{-(k+1+s)},
\end{equation*}
whence (\ref{1.16}).

Finally, we turn back to prove (\ref{1.13}) and (\ref{1.14}). First, by Lemma \ref{le4.1}, we propose to prove (\ref{1.13}). However, we are not able to
prove it for all $s\in [0,\frac32]$ at this moment, we must distinguish the argument by the value of $s$. First, it is trivial for the case  $s=0$, now,
for $s\in (0,\frac12]$, integrating (\ref{4.1}) in time, by (\ref{1.11}), we obtain that for $s\in (0,\frac12]$
\begin{align}\label{5.12}
\|(\nabla\Phi,\varrho,u,w,b)\|^2_{\dot{H}^{-s}}&\lesssim \|(\nabla\Phi(0),\varrho_0,u_0,w_0,b_0)\|^2_{\dot{H}^{-s}}
+\int_{0}^t\mathscr{D}_3(\tau)\|(\nabla\Phi,\varrho,u,w,b)\|_{\dot{H}^{-s}}d\tau\nonumber  \\
 &\leq c(1+\sup_{0\leq\tau\leq t}\|(\nabla\Phi,\varrho,u,w,b)\|_{\dot{H}^{-s}}).
\end{align}
This, together with the  Cauchy's inequality implies (\ref{1.13}) for $s\in (0,\frac12]$ and thus verifies (\ref{1.15}) for $s\in (0,\frac12]$. Next, let
$s\in (\frac12,1)$, note that, the arguments for the case $s\in (0,\frac12]$ can not be applied to this case. However, observing that we have
$(\nabla\Phi(0),\varrho_0,u_0,w_0,b_0)\in \dot{H}^{-\frac12}$ due to the fact $\dot{H}^{-s}\cap L^2\subset \dot{H}^{-q}$ for any $q\in [0,s]$. At this
stage, from (\ref{1.15}), it holds that for $k\geq 0$ and $N\geq k+2$
\begin{equation}\label{5.13}
\|\nabla^k(\nabla\Phi,\varrho,u,w,b)(t)\|_{L^2}\leq c_0(1+t)^{-\frac{k+\frac12}{2}}.
\end{equation}
Thus, integrating (\ref{4.2}) in time for $s\in (\frac12,1)$ and applying (\ref{5.13}), yields that
\begin{align}\label{5.14}
\|(\nabla\Phi,\varrho,u,w,b)\|^2_{\dot{H}^{-s}}\lesssim &\|(\nabla\Phi(0),\varrho_0,u_0,w_0,b_0)\|^2_{\dot{H}^{-s}}
+\int_{0}^t\sqrt{\mathscr{D}_3(\tau)}\|(\varrho,u,w,b)(\tau)\|_{L^2}^{s-\frac12}\nonumber  \\
&\times\|\nabla(\varrho,u,w,b)(\tau)\|_{L^2}^{\frac32-s}
\|(\nabla\Phi,\varrho,u,w,b)\|_{\dot{H}^{-s}}d\tau\nonumber  \\
\leq& c(1+\sup_{0\leq\tau\leq t}\|(\nabla\Phi,\varrho,u,w,b)\|_{\dot{H}^{-s}}\int_0^t(1+\tau)^{-2(1-\frac s2)}d\tau)\nonumber  \\
\leq& c_0(1+\sup_{0\leq\tau\leq t}\|(\nabla\Phi,\varrho,u,w,b)\|_{\dot{H}^{-s}}),
\end{align}
where in the last inequality, we have used the fact $s\in (\frac12,1)$, so that the time integral is finite. By the Cauchy's inequality, this implies
(\ref{1.13}) for $s\in (\frac12,1)$, from which, we also verify (\ref{1.15}) for $s\in (\frac12,1)$. Finally, let $s\in [1,\frac32)$, we choose
$s_0$ such that $s-\frac12<s_0<1$. Then $(\nabla\Phi(0),\varrho_0,u_0,w_0,b_0)\in \dot{H}^{-s_0}$, and from (\ref{1.15}), the following estimate holds
\begin{equation}\label{5.15}
\|\nabla^k(\nabla\Phi,\varrho,u,w,b)\|_{L^2}\leq c_0(1+t)^{-\frac{k+s_0}{2}}
\end{equation}
for $k\geq0$ and $N\geq k+2$. Therefore, similar to (\ref{5.14}), using (\ref{5.15}) and (\ref{4.2}) for $s\in (1,\frac32)$, we conclude that
\begin{align}\label{5.16}
\|(\nabla\Phi,\varrho,u,w,b)\|^2_{\dot{H}^{-s}}
\leq& c(1+\sup_{0\leq\tau\leq t}\|(\nabla\Phi,\varrho,u,w,b)\|_{\dot{H}^{-s}}\int_0^t(1+\tau)^{s_0+\frac32-s}d\tau)\nonumber  \\
\leq& c_0(1+\sup_{0\leq\tau\leq t}\|(\nabla\Phi,\varrho,u,w,b)\|_{\dot{H}^{-s}}).
\end{align}
Here, we have taken into account $s-s_0<\frac12$, so that the time integral in (\ref{5.16}) is finite. This implies (\ref{5.13}) for $s\in (1,\frac32)$
and thus we have proved (\ref{1.15}) for $s\in (1,\frac32)$. The rest of the proof is exactly same with above, we only need to replace Lemma \ref{le2.6}
and Lemma \ref{le4.1}  by Lemma \ref{le2.7} and Lemma \ref{le4.2}, respectively. Then, we can deduce (\ref{1.14}) for $s\in (0,\frac32]$, and thus verify
(\ref{1.16}) for $s\in (0,\frac32]$. Here, we just skip it. Thus, we have completed the proof of Theorem \ref{th1.2}.
\end{proof}

\end{document}